\newtheorem{theorem}{Theorem}[section]
\newtheorem{problem}[theorem]{Problem}
\newtheorem{remark}[theorem]{Remark}
\newproof{pf}{Proof}
\newcommand{\RR}{{\if mm {\rm I}\mkern -3mu{\rm R}\else \leavevmode
		\hbox{I}\kern -.17em\hbox{R} \fi}}
\newcommand{\bu}{\mbox{\boldmath{$u$}}}
\newcommand{\bt}{\mbox{\boldmath{$t$}}}
\newcommand{\bcero}{\mbox{\boldmath{$0$}}}
\newcommand{\bx}{\mbox{\boldmath{$\bx$}}}
\newcommand{\by}{\mbox{\boldmath{$\by$}}}
\newcommand{\bv}{\mbox{\boldmath{$v$}}}
\newcommand{\var}{\varepsilon}
\newcommand{\ba}{\mbox{\boldmath{$a$}}}
\newcommand{\fb}{{{f}}}
\newcommand{\bg}{\mbox{\boldmath{$g$}}}
\renewcommand{\by}{\mbox{\boldmath{$y$}}}
\renewcommand{\bx}{\mbox{\boldmath{$x$}}}
\newcommand{\be}{\mbox{\boldmath{$e$}}}
\newcommand{\bn}{\mbox{\boldmath{$n$}}}
\newcommand{\bh}{{{h}}}
\newcommand{\bbf}{\mbox{\boldmath{$f$}}}
\newcommand{\bbh}{\mbox{\boldmath{$h$}}}
\newcommand{\btheta}{\mbox{\boldmath{$\theta$}}}
\newcommand{\beeta}{\mbox{\boldmath{$\eta$}}}
\newcommand{\bxi}{\mbox{\boldmath{$\xi$}}}
\newcommand{\bTheta}{\mbox{\boldmath{$\Theta$}}}
\newcommand{\bUcal}{\mbox{\boldmath{$\hat{u}$}}}
\newcommand{\Gamae}{\Gamma^{\varepsilon }}
\renewcommand{\d}{\partial}
\newcommand{\eij}{e_{i||j}}
\newcommand{\deij}{\dot{e}_{i||j}}
\newcommand{\ekl}{e_{k||l}}
\newcommand{\dekl}{\dot{e}_{k||l}}
\newcommand{\eab}{e_{\alpha||\beta}}
\newcommand{\est}{e_{\sigma||\tau}}
\newcommand{\dest}{\dot{e}_{\sigma||\tau}}
\newcommand{\estres}{e_{\sigma||3}}
\newcommand{\destres}{\dot{e}_{\sigma||3}}
\newcommand{\eatres}{e_{\alpha||3}}
\newcommand{\edtres}{e_{3||3}}
\newcommand{\deab}{\dot{e}_{\alpha||\beta}}
\newcommand{\deatres}{\dot{e}_{\alpha||3}}
\newcommand{\dedtres}{\dot{e}_{3||3}}
\newcommand{\gab}{\gamma_{\alpha\beta}}
\newcommand{\gst}{\gamma_{\sigma\tau}}
\newcommand{\rab}{\rho_{\alpha\beta}}
\renewcommand{\a}{a^{\alpha\beta\sigma\tau}}
\renewcommand{\b}{b^{\alpha\beta\sigma\tau}}
\renewcommand{\c}{c^{\alpha\beta\sigma\tau}}
\newcommand{\aeps}{a^{\alpha\beta\sigma\tau,\varepsilon}}
\newcommand{\beps}{b^{\alpha\beta\sigma\tau,\varepsilon}}
\newcommand{\ceps}{c^{\alpha\beta\sigma\tau,\varepsilon}}
\renewcommand{\ae}{\ a.e. \ t\in(0,T)}
\newcommand{\aes}{\ a.e. \ \textrm{in} \ (0,T)}
\newcommand{\forallt}{\ \forall  \ t\in[0,T]}
\newcommand{\Forallt}{\ \textrm{for all} \ t\in[0,T]}
\newcommand{\deb}{\rightharpoonup}
\newcommand{\en}{ \ \textrm{in} \ }
\newcommand{\on}{ \ \textrm{on} \ }
\newcommand{\into}{\int_{\omega}}
\newcommand{\intO}{\int_{\Omega}}
\newcommand{\ten}{(a^{\alpha \sigma}a^{\beta \tau} + a^{\alpha \tau}a^{\beta\sigma})}
\newcommand{\WLO}{H^{1}(0,T;L^2(\Omega))}
\newcommand{\WLo}{H^{1}(0,T;L^2(\omega))}
\newcommand{\WHO}{H^{1}(0,T;H^1(\Omega))}
\newcommand{\WHOt}{H^{1}(0,T;[H^1(\Omega)]^3)}
\newcommand{\WHo}{H^{1}(0,T;H^1(\omega))}
\newcommand{\WVo}{H^{1}(0,T;V_M(\omega))}
\newcommand{\WVO}{H^{1}(0,T;V(\Omega))}
\newcommand{\LLO}{L^2(0,T;L^2(\Omega))}
\newcommand{\WHMO}{H^{1}(0,T;H^{-1}(\Omega))}
\newcommand{\WHMOt}{H^{1}(0,T;[H^{-1}(\Omega)]^3)}
\newcommand{\WVMO}{H^{1}(0,T;V^{\#}_M(\Omega))}
\newcommand{\WVMo}{H^{1}(0,T;V^{\#}_M(\omega))}
\newcommand{\mest}{\overline{e_{\sigma||\tau}}}
\newcommand{\mests}{\overline{e_{\sigma||\tau}(s)}}
\newcommand{\dmest}{\dot{\overline{e_{\sigma||\tau}}}}
\newcommand{\meab}{\overline{e_{\alpha||\beta}}}
\newcommand{\dmeab}{\dot{\overline{e_{\alpha||\beta}}}}
\newcommand{\eitres}{e_{i||3}}
\newcommand{\bw}{\mbox{\boldmath{$w$}}}
\newcommand{\bG}{\mbox{\boldmath{$G$}}}
\journal{Journal}
\begin{document}

\begin{frontmatter}

\title{Mathematical justification of a viscoelastic generalized membrane problem}

\author[compostela]{G.~Casti\~neira\corref{mycorrespondingauthor}}
\cortext[mycorrespondingauthor]{Corresponding author}
\ead{gonzalo.castineira@uca.es}
\address[compostela]{ Escuela Polit\'ecnica Superior de Algeciras, Avda. Ramón Puyol s/n, 11202,   Departamento de Matem\'aticas,
	Univ. de C\'adiz, Spain}

\author[corunha]{\'A. Rodr\'{\i}guez-Ar\'os}
\ead{angel.aros@udc.es}
\address[corunha]{E.T.S. N\'autica e M\'aquinas
Paseo de Ronda, 51, 15011, Departamento de  Matem\'aticas, Univ. da Coru\~na, Spain}

\begin{abstract}
We consider a family of linearly viscoelastic shells with thickness $2\var$, clamped along a portion of their lateral face, all having the same middle surface $S=\btheta(\bar{\omega})\subset\RR^3$, where $\omega\subset\RR^2$ is a bounded and connected open set with a Lipschitz-continuous boundary $\gamma$. 
We show that, if the applied body force density is $\mathcal{O}(1)$ with respect to $\var$ and surface tractions density is $\mathcal{O}(\var)$, the solution of the scaled variational problem in curvilinear coordinates, defined over the fixed domain $\Omega=\omega\times(-1,1)$, converges in {\it ad hoc}  functional spaces to a limit $\bu$ as $\var\to 0$ . Furthermore, the average $\overline{\bu(\var)}= \frac1{2}\int_{-1}^{1}\bu (\var) dx_3$, converges in an \textit{ad hoc} space to the unique solution of  what we have identified as (scaled) two-dimensional equations of a viscoelastic generalized membrane  shell, which includes a long-term memory that takes into account previous deformations. We finally provide convergence results which justify those equations.
\end{abstract}

\begin{keyword}
Asymptotic Analysis \sep Viscoelasticity \sep Shells \sep Generalized Membranes \sep Time dependency.

\MSC[2010] 34K25,  35Q7,  34E05, 34E10,  41A60, 74K25,  74K15,   74D05,  35J15
\end{keyword}

\end{frontmatter}
\linenumbers

\section{Introduction}\setcounter{equation}{0}

In solid mechanics, the obtention of  models for rods, beams,
plates and shells is based on {\it a priori} hypotheses on the
displacement and/or stress fields which, upon substitution in the three-dimensional
equilibrium and constitutive equations, lead to useful simplifications. Nevertheless, from both
constitutive and geometrical point of views, there is a need to
justify the validity of most of the models obtained in this way.

For this reason a considerable effort has been made in the past decades by many authors in order
to derive new models and justify the existing ones by
using the asymptotic expansion method, whose foundations can be
found in \cite{Lions}. Indeed, the first applied results were obtained with the justification of the linearized theory of plate bending in \cite{CD,Destuynder}.

A complete theory regarding  elastic shells can be found in \cite{Ciarlet4b}, where models  for elliptic membranes (see also \cite{CiarletLods,CiarletLods2}), generalized membranes (see \cite{CiarletLods5}) and flexural shells (see \cite{CiarletLods4}) are presented. It contains a full description of the asymptotic procedure that leads to the corresponding sets of two-dimensional equations. Also, the dynamic case has been studied in \cite{Limin_mem,Limin_flex,Limin_koit}, concerning the justification of dynamic equations for membrane, flexural and Koiter shells.  Furthermore, the limit of the three-dimensional unilateral, frictionless, in \cite{ArosObs,ARA2018,AA_contact_shell} we find a contact problem study for elastic elliptic shells where a two-dimensional obstacle problem is derived using asymptotic methods. Even more recently,  we  find the obtention of error estimates for the membrane case in \cite{Cao_Aros}, a  convergence study for elastic elliptic membrane shells in normal compliance contact with a deformable obstacle in \cite{ArosCao19},  and an asymptotic analysis of thermoelastic shells in normal damped response contact in \cite{TGAS_2020}.

A large number of real problems had made it necessary the study of new models which could take into account effects such as hardening and memory of the material. An example of these are the  viscoelasticity models (see \cite{DL,LC1990,Pipkin}). Many authors have contributed to the nowadays knowledge of this sort of problems, providing justified models and results.  Indeed, we can find examples in the literature as \cite{contact2,contact3,SofoneaAMS,contact7,contact1,ASV06}  and in the references therein, a variety of models for  problems concerning the viscoelastic behaviour of the material.  In particular, there exist 
studies of the behaviour of viscoelastic plates as in  \cite{jaru1,plate12}, where models for  von K\'arm\'an plates are analysed. In some of these works, we can find  analysis of the influence of short or long term memory in the equations modelling a problem. These terms  take into account previous deformations of the body, hence, they are commonly presented in some viscoelastic problems. For instance, on one hand, we can find in \cite{plate15} models including a short term memory presented by  a system of integro-differential and pseudoparabolic equations describing large deflections on a viscoelastic plate. On the other hand, in \cite{plate13}  a long term memory is considered on the study of the asymptotic behaviour of the solution of a von K\'arm\'an plate when the time variable tends to infinity. Also, in the reference \cite{plate14}, the authors study the effects of great deflections in thin plates covering both short and long term memory cases. Concerning viscoelastic shell problems, in \cite{plate16} we can find different kind of studies where the authors also remark the viscoelastic property of the material of a shell. For the problems dealing with the shell-type equations, there exists a very limited amount of results available, for instance, \cite{shell11} where the authors present a model for a dynamic contact problem where a short memory (Kelvin-Voigt) material is considered. Particularly remarkable is the increasing number of studies of viscoelastic shells problems in order  to reproduce the complex behaviour of tissues in the field of biomedicine. For example, in \cite{Quarteroni} the difficulties of this kind of problems are detailed and even though an one-dimensional model is derived for modelling a vessel wall, the author  comments the possibility of considering two-dimensional models with a shell-type description and  a viscoelastic constitutive law. In this direction, to our knowledge, in \cite{intro2} we gave the first steps towards the justification of existing models of viscoelastic shells and the finding of new ones. By using the asymptotic expansion method, we found a rich variety of cases, depending on the geometry of the middle surface, the boundary conditions and the order of the applied forces. The most remarkable feature was that from the asymptotic analysis of the three-dimensional problems which included a short term memory represented by a time derivative, a long term memory arised in the two-dimensional limit problems, represented by an integral with respect to the time variable. This fact, agreed with previous asymptotic analysis of viscoelastic rods in  \cite{AV,AV2} where an analogous behaviour was presented as well.


In \cite{eliptico,eliptico2}  we  justified  the equations of a viscoelastic membrane shell  where the surface $S$ is elliptic and the boundary condition of place is considered in the whole lateral face of the shell. Therefore, in this paper the main aim is to justify the remaining cases in the group of viscoelastic membrane cases, known as the viscoelastic generalized membrane shell equations. In such a group, we shall distinguish two kinds of membranes, as it will be detailed in following sections.
To be more specific, we prove that the scaled three-dimensional unknown, $\bu(\var)$, converges as the small parameter $\var$ tends to zero in an \textit{ad hoc} functional space and its transversal average converges to  $\bxi^\var$, the unique  solution of the two-dimensional associated problem. Moreover,  unlike the viscoelastic elliptic membrane shells, the limit of the scaled three-dimensional unknown $\bu(\var)$ is not necessary independent of $x_3$, however we find that that $\d_3\bu(\var)\rightarrow \bcero$ in $\WLO$.

We will follow the notation and style of \cite{Ciarlet4b}, where the linear elastic shells are studied.
For this reason, we shall  reference auxiliary results which apply in the same manner to the viscoelastic case. One of the major differences with respect to previous works in elasticity, consists on time dependence, that will lead to ordinary differential equations that we need to solve in order to characterize the zeroth-order approach of the solution.

The structure of the paper is the following: in Section \ref{problema} 
we shall recall the viscoelastic  problem in Cartesian coordinates and  then, considering the problem for a family of viscoelastic shells of thickness $2\var$, we formulate the problem in curvilinear coordinates. In Section \ref{seccion_dominio_ind} we will use a projection map into a reference domain, we will introduce the scaled unknowns and forces and the assumptions on the coefficients. In Section \ref{preliminares} we recall some technical results which will be needed in what follows. In Section \ref{seccion_fuerzas_admisibles} we shall study the completion spaces that will lead to well posed problems for the viscoelastic membrane shell equations. Then, we will introduce an assumption on the applied forces, needed in the convergence analysis.  In Section \ref{seccion_convergencia} we enunciate the two-dimensional equations for a viscoelastic generalized membrane shell and we present the convergence results when the small parameter $\var$ tends to zero, which is the main result of this paper. Then, we present the convergence results in terms of de-scaled unknowns. In Section \ref{conclusiones} we shall present some conclusions, including a comparison between the viscoelastic models and the elastic case studied in \cite{Ciarlet4b} and comment the convergence results for the remaining cases.

\section{The three-dimensional linearly viscoelastic shell problem}\setcounter{equation}{0} \label{problema}

We denote by $\mathbb{S}^d$, where $d=2,3$ in practice, the space of second-order symmetric tensors on $\mathbb{R}^d$, while \textquotedblleft$\ \cdot$ \textquotedblright will represent the inner product and $\left|\cdot\right|$  the usual norm in $\mathbb{S}^d$ and  $\mathbb{R}^d$. In  what follows, unless the contrary is explicitly written, we will use summation convention on repeated indices. Moreover, Latin indices $i,j,k,l,...$, take their values in the set $\{1,2,3\}$, whereas Greek indices $\alpha,\beta,\sigma,\tau,...$, do it in the set  $\{1,2\}$. Also, we use standard notation for the Lebesgue and Sobolev spaces. For a time dependent function $u$, we denote $\dot{u}$ the first derivative of $u$ with respect to the time variable. Recall that  $"\rightarrow"$ denotes strong convergence, while  $"\rightharpoonup" $ denotes weak convergence.


Let ${\Omega}^*$ be a domain of $\mathbb{R}^3$, with a Lipschitz-continuous boundary ${\Gamma^*}=\d{\Omega^*}$. Let ${\bx^*}=({x}_i^*)$ be a generic point of  its closure $\bar{\Omega}^*$ and let ${\d}^*_i$ denote the partial derivative with respect to ${x}_i^*$. Let $dx^*$ denote the volume element in $\Omega^*$,  $d\Gamma^*$ denote the area element along $\Gamma^*$ and  $\bn^*$ denote the unit outer normal vector along $\Gamma^*$. Finally, let $\Gamma^*_0$  and $\Gamma_1^*$ be subsets of $\Gamma^*$ such that $meas(\Gamma_0^*)>0$ and $\Gamma^*_0 \cap \Gamma_1^*=\emptyset.$ 

The set $\Omega^*$ is the region occupied by a deformable body in the absence of applied forces. We assume that this body is made of a Kelvin-Voigt viscoelastic material, which is homogeneous and isotropic,  so that the material is characterized by its Lam\'e coefficients   $\lambda\geq0, \mu>0$ and its viscosity coefficients, $\theta\geq 0,\rho\geq 0$ (see for instance \cite{DL,LC1990,Shillor}).

Let $T>0$ be the time period of observation. Under the effect of applied forces, the  body is deformed and we denote by $u_i^*:[0,T]\times \bar{\Omega}^*\rightarrow \mathbb{R}^3$ the Cartesian components of the displacements field, defined as $\bu^*:=u_i^* \be^{i}:[0,T]\times\bar{\Omega}^* \rightarrow \mathbb{R}^3$, where $\{\be^i\}$ denotes the Euclidean canonical basis in $\mathbb{R}^3$. 
Moreover, we consider that the displacement field vanishes on the set $\Gamma^*_0$. Hence, the  displacements field $\bu^*=(u_i^*):[0,T]\times\Omega^*\longrightarrow \mathbb{R}^3$ is solution of the following three-dimensional problem in Cartesian coordinates.

\begin{problem}\label{problema_mecanico}
	Find $\bu^*=(u_i^*):[0,T]\times\Omega^*\longrightarrow \mathbb{R}^3$ such that,
	\begin{align}\label{equilibrio}
	-\d_j^*\sigma^{ij,*}(\bu^*)&=f^{i,*} \en \Omega^*, \\\label{Dirichlet}
	u_i^*&=0 \on \Gamma^*_0, \\\label{Neumann}
	\sigma^{ij,*}(\bu^*)n_j^*&=h^{i,*} \on \Gamma_1^*,\\ \label{condicion_inicial} 
	\bu^*(0,\cdot)&=\bu_0^* \en \Omega^*,
	\end{align}
	where the functions
	\begin{align*}
	\sigma^{ij,*}(\bu^*):=A^{ijkl,*}e_{kl}^*(\bu^*)+ B^{ijkl,*}e_{kl}^*(\dot{\bu}^*),
	\end{align*}
	are the  components of the linearized stress tensor field and where the functions
	\begin{align*} 
	& A^{ijkl,*}:= \lambda \delta^{ij}\delta^{kl} + \mu\left(\delta^{ik}\delta^{jl} + \delta^{il}\delta^{jk}\right) , 
	\\ 
	& B^{ijkl,*}:= \theta \delta^{ij}\delta^{kl} + \frac{\rho}{2}\left(\delta^{ik}\delta^{jl} + \delta^{il}\delta^{jk}\right) , 
	\end{align*}
	are the  components of the three-dimensional elasticity and viscosity fourth order tensors, respectively, and 
	\begin{align*}
	e^*_{ij}(\bu^*):= \frac1{2}(\d^*_ju^*_{i}+ \d^*_iu^*_{j}),
	\end{align*}
	designate the  components of the linearized strain tensor associated with the displacement field $\bu^*$of the set $\bar{\Omega}^*$.
\end{problem}
We now proceed to describe the equations in Problem \ref{problema_mecanico}. Expression (\ref{equilibrio}) is the equilibrium equation, where $f^{i,*}$ are the  components of the volumic force densities. The equality (\ref{Dirichlet}) is the Dirichlet condition of place, (\ref{Neumann}) is the Neumann condition, where $h^{i,*}$ are the  components of surface force densities and (\ref{condicion_inicial}) is the initial condition, where $\bu_0^*$ denotes the initial  displacements.

Note that, for the sake of briefness, we omit the explicit dependence on the space and time variables when there is no ambiguity. Let us define the space of admissible unknowns,
\begin{align*} 
V(\Omega^*)=\{\bv^*=(v_i^*)\in [H^1(\Omega^*)]^3; \bv^*=\mathbf{\bcero} \ on \ \Gamma_0^*  \}.
\end{align*}
Therefore, assuming enough regularity,  the unknown  $\bu^*=(u_i^*)$ satisfies the following variational problem in Cartesian coordinates:
\begin{problem}\label{problema_cartesian}
	Find $\bu^*=(u_i^*):[0,T]\times {\Omega}^* \rightarrow \mathbb{R}^3$  such that, 
	\begin{align*} 
	\displaystyle  \nonumber
	& \bu^*(t,\cdot)\in V(\Omega^*) \forallt,
	\\ \nonumber 
	&\int_{\Omega^*}A^{ijkl,*}e^*_{kl}(\bu^*)e^*_{ij}(\bv^*) dx^*+ \int_{\Omega^*} B^{ijkl,*}e^*_{kl}(\dot{\bu}^*)e_{ij}^*(\bv^*)   dx^*
	\\ 
	& \quad= \int_{\Omega^*} f^{i,*} v_i^*  dx^* + \int_{\Gamma_1^*} h^{i,*} v_i^*  d\Gamma^* \quad \forall \bv^*\in V(\Omega^*), \aes,
	\\\displaystyle 
	& \bu^*(0,\cdot)= \bu_0^*(\cdot).
	\end{align*}
\end{problem} 

Let us consider that $\Omega^*$ is a viscoelastic shell of thickness $2\var$. Now, we shall express the equations of the Problem \ref{problema_cartesian} in terms of  curvilinear coordinates. Let $\omega$ be a domain of $\mathbb{R}^2$, with a Lipschitz-continuous boundary $\gamma=\d\omega$. Let $\by=(y_\alpha)$ be a generic point of  its closure $\bar{\omega}$ and let $\d_\alpha$ denote the partial derivative with respect to $y_\alpha$. 

Let $\btheta\in\mathcal{C}^2(\bar{\omega};\mathbb{R}^3)$ be an injective mapping such that the two vectors $\ba_\alpha(\by):= \d_\alpha \btheta(\by)$ are linearly independent. These vectors form the covariant basis of the tangent plane to the surface $S:=\btheta(\bar{\omega})$ at the point $\btheta(\by)=\by^*.$ We can consider the two vectors $\ba^\alpha(\by)$ of the same tangent plane defined by the relations $\ba^\alpha(\by)\cdot \ba_\beta(\by)=\delta_\beta^\alpha$, that constitute the contravariant basis. We define the unit vector, 
\begin{align}\label{a_3}
\ba_3(\by)=\ba^3(\by):=\frac{\ba_1(\by)\wedge \ba_2(\by)}{| \ba_1(\by)\wedge \ba_2(\by)|},
\end{align} 
normal vector to $S$ at the point $\btheta(\by)=\by^*$, where $\wedge$ denotes vector product in $\mathbb{R}^3.$ 

We can define the first fundamental form, given as metric tensor, in covariant or contravariant components, respectively, by
\begin{align*}
a_{\alpha\beta}:=\ba_\alpha\cdot \ba_\beta, \qquad a^{\alpha\beta}:=\ba^\alpha\cdot \ba^\beta,
\end{align*}
the second fundamental form, given as curvature tensor, in covariant or mixed components, respectively, by
\begin{align*}
b_{\alpha\beta}:=\ba^3 \cdot \d_\beta \ba_\alpha, \qquad b_{\alpha}^\beta:=a^{\beta\sigma} b_{\sigma\alpha},
\end{align*}
and the Christoffel symbols of the surface $S$ by
\begin{align*}
\Gamma^\sigma_{\alpha\beta}:=\ba^\sigma\cdot \d_\beta \ba_\alpha.
\end{align*}

The area element along $S$ is $\sqrt{a}dy=dy^*$ where 
\begin{align}\label{definicion_a}
a:=\det (a_{\alpha\beta}).
\end{align}

Let $\gamma_0$ be a subset  of  $\gamma$, such that $meas (\gamma_0)>0$. 
For each $\varepsilon>0$, we define the three-dimensional domain $\Omega^\varepsilon:=\omega \times (-\varepsilon, \varepsilon)$ and  its boundary $\Gamae=\d\Omega^\var$. We also define  the following parts of the boundary, 
\begin{align*}
\Gamma^\varepsilon_+:=\omega\times \{\varepsilon\}, \quad \Gamma^\varepsilon_-:= \omega\times \{-\varepsilon\},\quad \Gamma_0^\varepsilon:=\gamma_0\times[-\varepsilon,\varepsilon].
\end{align*}

Let $\bx^\varepsilon=(x_i^\varepsilon)$ be a generic point of $\bar{\Omega}^\varepsilon$ and let $\d_i^\var$ denote the partial derivative with respect to $x_i^\varepsilon$. Note that $x_\alpha^\varepsilon=y_\alpha$ and $\d_\alpha^\varepsilon =\d_\alpha$. Let $\bTheta:\bar{\Omega}^\varepsilon\rightarrow \mathbb{R}^3$ be the mapping defined by
\begin{align} \label{bTheta}
\bTheta(\bx^\varepsilon):=\btheta(\by) + x_3^\varepsilon \ba_3(\by) \ \forall \bx^\varepsilon=(\by,x_3^\varepsilon)=(y_1,y_2,x_3^\varepsilon)\in\bar{\Omega}^\varepsilon.
\end{align}

The next theorem shows that if the injective mapping $\btheta:\bar{\omega}\rightarrow\mathbb{R}^3$ is smooth enough, the mapping $\bTheta:\bar{\Omega}^\var\rightarrow\mathbb{R}^3$ is also injective for $\var>0$ small enough (see Theorem 3.1-1, \cite{Ciarlet4b}).

\begin{theorem}\label{var_0}
	Let $\omega$ be a domain in $\mathbb{R}^2$. Let $\btheta\in\mathcal{C}^2(\bar{\omega};\mathbb{R}^3)$ be an injective mapping such that the two vectors $\ba_\alpha=\d_\alpha\btheta$ are linearly independent at all points of $\bar{\omega}$ and let $\ba_3$  defined  in (\ref{a_3}). Then, there exists $\var_0>0$ such that for all $\var_1$, $0<\var_1\leq\var_0$   the mapping $\bTheta:\bar{\Omega}_1 \rightarrow\mathbb{R}^3$ defined by
	\begin{align*}
	\bTheta(\by,x_3):=\btheta(\by) + x_3 \ba_3(\by) \ \ \forall (\by,x_3)\in\bar{\Omega}_1, \ \textrm{where} \ \Omega_1:=\omega\times(-\var_1,\var_1),
	\end{align*}
	is a $\mathcal{C}^1-$diffeomorphism from $\bar{\Omega}_1$ onto $\bTheta(\bar{\Omega}_1)$ and $\det (\bg_1,\bg_2,\bg_3)>0$ in $\bar{\Omega}_1$, where $\bg_i:=\d_i\bTheta$. 
\end{theorem}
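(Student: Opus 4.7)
The strategy is to first compute the Jacobian of $\bTheta$ explicitly, verify it is positive on the reference surface $x_3=0$, and extend this by continuity and compactness; then separately establish global injectivity by a compactness argument, and finally combine the two via the inverse function theorem.

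First I would compute the tangent vectors $\bg_i := \d_i \bTheta$ of the deformation $\bTheta(\by,x_3)=\btheta(\by)+x_3\ba_3(\by)$. Direct differentiation gives
\begin{align*}
\bg_\alpha(\by,x_3) = \ba_\alpha(\by) + x_3\,\d_\alpha\ba_3(\by), \qquad \bg_3(\by,x_3)=\ba_3(\by).
\end{align*}
Since $\btheta\in\mathcal{C}^2(\bar\omega;\mathbb{R}^3)$ and $\ba_1\wedge\ba_2$ does not vanish on $\bar\omega$, the normal $\ba_3$ belongs to $\mathcal{C}^1(\bar\omega;\mathbb{R}^3)$, so each $\bg_i$ is continuous on $\bar\Omega_\var$ for any $\var>0$. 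On the middle surface we have $\det(\bg_1,\bg_2,\bg_3)(\by,0)=(\ba_1\wedge\ba_2)\cdot\ba_3=|\ba_1\wedge\ba_2|(\by)>0$ by the linear independence hypothesis. Since $\bar\omega$ is compact, this determinant is bounded below by a positive constant on $\bar\omega\times\{0\}$, and by continuity there exists $\var_0>0$ such that $\det(\bg_1,\bg_2,\bg_3)>0$ on all of $\bar\Omega_{\var_0}$. By the inverse function theorem, $\bTheta$ is then a local $\mathcal{C}^1$-diffeomorphism around every point of $\bar\Omega_{\var_0}$.

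The main obstacle is upgrading this local statement to a global one. I would argue by contradiction: suppose no $\var_1\in(0,\var_0]$ yields a globally injective $\bTheta$. Then for every $n$ there exist distinct points $(\by_n,x_3^n),(\by_n',{x_3'}^n)\in\bar\Omega_{1/n}$ with $\bTheta(\by_n,x_3^n)=\bTheta(\by_n',{x_3'}^n)$. By compactness of $\bar\omega\times\{0\}$, pass to subsequences so that $(\by_n,x_3^n)\to(\by,0)$ and $(\by_n',{x_3'}^n)\to(\by',0)$. Taking limits in $\bTheta(\by_n,x_3^n)=\bTheta(\by_n',{x_3'}^n)$ and using continuity gives $\btheta(\by)=\btheta(\by')$, hence $\by=\by'$ by injectivity of $\btheta$ on $\bar\omega$. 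But then both sequences accumulate at the same point $(\by,0)$ at which $\bTheta$ is a local diffeomorphism, so for $n$ sufficiently large the two arguments must coincide, contradicting their distinctness.

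Fix $\var_1\in(0,\var_0]$ for which $\bTheta$ is thus injective on $\bar\Omega_{\var_1}$. The mapping $\bTheta:\bar\Omega_1\to\bTheta(\bar\Omega_1)$ is then a continuous bijection with positive Jacobian everywhere, so the inverse function theorem furnishes a $\mathcal{C}^1$ local inverse around every point of the image; patching these local inverses with the global bijection yields that $\bTheta^{-1}$ is $\mathcal{C}^1$ on $\bTheta(\bar\Omega_1)$, completing the proof that $\bTheta$ is a $\mathcal{C}^1$-diffeomorphism. I expect the compactness/contradiction step for global injectivity to be the most delicate one, since the other steps are routine consequences of smoothness and the inverse function theorem.
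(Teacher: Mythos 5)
Your proposal is correct and follows essentially the same approach as the proof the paper references (Theorem 3.1-1 in Ciarlet, cited but not reproduced in the text): compute the Jacobian on the middle surface, use compactness of $\bar\omega$ plus continuity to obtain a uniform positive lower bound on a strip $\bar\Omega_{\var_0}$, apply the inverse function theorem for local invertibility, and establish global injectivity by the sequential compactness contradiction (distinct preimage pairs accumulate at a common point of $\bar\omega\times\{0\}$ by the injectivity of $\btheta$, contradicting local injectivity there). The argument is complete as written.
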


For each $\var$, $0<\var\le\var_0$, the set $\bTheta(\bar{\Omega}^\var)=\bar{\Omega}^*$ is the reference configuration of a viscoelastic shell, with middle surface $S=\btheta(\bar{\omega})$ and thickness $2\varepsilon>0$.
Furthermore for $\varepsilon>0,$ $\bg_i^\varepsilon(\bx^\varepsilon):=\d_i^\varepsilon\bTheta(\bx^\varepsilon)$ are linearly independent and the mapping $\bTheta:\bar{\Omega}^\varepsilon\rightarrow \mathbb{R}^3$ is injective for all $\var$, $0<\var\le\var_0$, as a consequence of injectivity of the mapping $\btheta$. Hence, the three vectors $\bg_i^\varepsilon(\bx^\varepsilon)$ form the covariant basis of the tangent space at the point $\bx^*=\bTheta(\bx^\varepsilon)$ and $\bg^{i,\varepsilon}(\bx^\varepsilon) $ defined by the relations $\bg^{i,\varepsilon}\cdot \bg_j^\varepsilon=\delta_j^i$ form the contravariant basis at the point $\bx^*=\bTheta(\bx^\varepsilon)$. We define the metric tensor, in covariant or contravariant components, respectively, by
\begin{align*}
g_{ij}^\varepsilon:=\bg_i^\varepsilon \cdot \bg_j^\varepsilon,\quad g^{ij,\varepsilon}:=\bg^{i,\varepsilon} \cdot \bg^{j,\varepsilon},
\end{align*}
and Christoffel symbols by
\begin{align} \label{simbolos3D}
\Gamma^{p,\varepsilon}_{ij}:=\bg^{p,\varepsilon}\cdot\d_i^\varepsilon \bg_j^\varepsilon. 
\end{align}

The volume element in the set $\bTheta(\bar{\Omega}^\varepsilon)=\bar{\Omega}^*$ is $\sqrt{g^\varepsilon}dx^\var=dx^*$ and the surface element in $\bTheta(\Gamma^\varepsilon)=\Gamma^*$ is $\sqrt{g^\varepsilon}d\Gamae =d\Gamma^*$  where
\begin{align} \label{g}
g^\varepsilon:=\det (g^\varepsilon_{ij}).
\end{align} 
Therefore, for a field ${\bv}^*$ defined in $\bTheta(\bar{\Omega}^\var)=\bar{\Omega}^*$, we define its covariant curvilinear coordinates $v_i^\var$ by
\begin{equation*}
{\bv}^*({\bx}^*)={v}^*_i({\bx}^*){\be}^i=:v_i^\var(\bx^\var)\bg^i(\bx^\var),\ {\rm with}\ {\bx}^*=\bTheta(\bx^\var).
\end{equation*}


Besides, we denote by $u_i^\varepsilon:[0,T]\times \bar{\Omega}^\varepsilon \rightarrow \mathbb{R}^3$ the covariant components of the displacements field, that is  $\bUcal^\var:=u_i^\varepsilon \bg^{i,\varepsilon}:[0,T]\times\bar{\Omega}^\varepsilon \rightarrow \mathbb{R}^3$ . For simplicity, we define the vector field $\bu^\varepsilon=(u_i^\varepsilon):[0,T]\times {\Omega}^\varepsilon \rightarrow \mathbb{R}^3$ which will be denoted vector of unknowns.

Recall that we assumed that the shell is subjected to a boundary condition of place; in particular that the displacements field vanishes in  $\bTheta(\Gamma_0^\varepsilon)=\Gamma_0^*$.

Accordingly, let us define the space of admissible unknowns,
\begin{align*} 
V(\Omega^\varepsilon)=\{\bv^\varepsilon=(v_i^\varepsilon)\in [H^1(\Omega^\varepsilon)]^3; \bv^\varepsilon=\mathbf{\bcero} \ on \ \Gamma_0^\varepsilon  \}.
\end{align*}

This is a real Hilbert space with the induced inner product of $[H^1(\Omega^\var)]^3$. The corresponding norm  is denoted by $\left\| \cdot\right\| _{1,\Omega^\var}$. 

Therefore, we can find the expression of the Problem \ref{problema_cartesian} in curvilinear coordinates (see \cite{Ciarlet4b} for details). Hence, the ``displacements" field $\bu^\var=(u_i^\var)$ verifies the following variational problem of a three-dimensional viscoelastic shell in curvilinear coordinates:

\begin{problem}\label{problema_eps}
	Find $\bu^\varepsilon=(u_i^\varepsilon):[0,T]\times {\Omega}^\varepsilon \rightarrow \mathbb{R}^3$  such that, 
	\begin{align} 
	\displaystyle  \nonumber
	& \bu^\varepsilon(t,\cdot)\in V(\Omega^\varepsilon) \forallt,
	\\ \nonumber 
	&\int_{\Omega^\varepsilon}A^{ijkl,\varepsilon}e^\varepsilon_{k||l}(\bu^\varepsilon)e^\varepsilon_{i||j}(\bv^\varepsilon)\sqrt{g^\varepsilon} dx^\varepsilon+ \int_{\Omega^\varepsilon} B^{ijkl,\varepsilon}e^\varepsilon_{k||l}(\dot{\bu}^\varepsilon)e_{i||j}^\var(\bv^\varepsilon) \sqrt{g^\varepsilon}  dx^\varepsilon
	\\ \label{Pbvariacionaleps}
	& \quad= \int_{\Omega^\varepsilon} f^{i,\varepsilon}v_i^\varepsilon \sqrt{g^\varepsilon} dx^\varepsilon + \int_{\Gamma_+^\varepsilon\cup\Gamma_-^\varepsilon} h^{i,\varepsilon} v_i^\varepsilon\sqrt{g^\varepsilon}  d\Gamma^\varepsilon  \quad \forall \bv^\varepsilon\in V(\Omega^\varepsilon), \aes,
	\\\displaystyle \nonumber
	& \bu^\varepsilon(0,\cdot)= \bu_0^\varepsilon(\cdot),
	\end{align}
\end{problem}
where the functions
\begin{align}\label{TensorAeps}
& A^{ijkl,\varepsilon}:= \lambda g^{ij,\varepsilon}g^{kl,\varepsilon} + \mu(g^{ik,\varepsilon}g^{jl,\varepsilon} + g^{il,\varepsilon}g^{jk,\varepsilon} ), 
\\ \label{TensorBeps}
& B^{ijkl,\varepsilon}:= \theta g^{ij,\varepsilon}g^{kl,\varepsilon} + \frac{\rho}{2}(g^{ik,\varepsilon}g^{jl,\varepsilon} + g^{il,\varepsilon}g^{jk,\varepsilon} ), 
\end{align}
are the contravariant components of the three-dimensional elasticity and viscosity tensors, respectively. We assume that the Lam\'e coefficients   $\lambda\geq0, \mu>0$ and the viscosity coefficients $\theta\geq 0,\rho\geq 0$  are all independent of $\var$. Moreover, the terms
\begin{align*}
e^\varepsilon_{i||j}(\bu^\var):= \frac1{2}(u^\varepsilon_{i||j}+ u^\varepsilon_{j||i})=\frac1{2}(\d^\varepsilon_ju^\varepsilon_i + \d^\varepsilon_iu^\varepsilon_j) - \Gamma^{p,\varepsilon}_{ij}u^\varepsilon_p,
\end{align*}
designate the covariant components of the linearized strain tensor associated with the displacement field $\bUcal^\var$of the set $\bTheta(\bar{\Omega}^\varepsilon)$.  Moreover, $f^{i,\var}$ denotes the contravariant components of the volumic force densities, $h^{i,\var}$ denotes contravariant components of surface force densities and $\bu_0^\var$ denotes the initial `` displacements " (actually, the initial displacement is $\bUcal_0^\var:=(u_0^\var)_i\bg^{i,\var}$).

Note that the following additional relations are satisfied,
\begin{align}\nonumber
\Gamma^{3,\varepsilon}_{\alpha 3}=\Gamma^{p,\varepsilon}_{33}&=0  \ \textrm{in} \ \bar{\Omega}^\varepsilon, \\
\label{tensor_terminos_nulos}
A^{\alpha\beta\sigma 3,\varepsilon}=A^{\alpha 333,\varepsilon}=B^{\alpha\beta\sigma 3 , \varepsilon}&=B^{\alpha 333, \varepsilon}=0 \ \textrm{in} \ \bar{\Omega}^\varepsilon,
\end{align}
as a consequence of the definition of $\bTheta$ in (\ref{bTheta}). 

The existence and uniqueness of solution of the Problem \ref{problema_eps} for $\var>0$ small enough, established in the following theorem,  was proved in \cite{intro2} (see Theorem 4.7). 
\begin{theorem}\label{Thexistunic}
	Let $\Omega^\var$ be a domain in $\mathbb{R}^3$ defined previously in this section and let $\bTheta$ be a  $\mathcal{C}^2$-diffeomorphism of $\bar{\Omega}^\var$ in its image $\bTheta(\bar{\Omega}^\var)$, such that the three vectors $\bg_i^\var(\bx)=\d_i^\var\bTheta(\bx^\var)$ are linearly independent for all $\bx^\var\in\bar{\Omega}^\var$. Let $\Gamma_0^\var$ be a $d\Gamma^\var$-measurable subset of $\gamma\times [-\var,\var]$ 
	such that  $meas(\Gamma_0^\var)>0.$
	Let $\fb^{i,\var}\in L^{2}(0,T; L^2(\Omega^\var)) $, $\bh^{i,\var}\in L^{2}(0,T; L^2(\Gamma_1^\var))$, where $\Gamma_1^\var:= \Gamma_+^\var\cup\Gamma_-^\var$. Let  $\bu_0^\var\in V(\Omega^\var). $ Then, there exists a unique solution $\bu^\var=(u_i^\var):[0,T]\times\Omega^\var \rightarrow \mathbb{R}^3$ satisfying the Problem \ref{problema_eps}. Moreover, $\bu^\var\in H^{1}(0,T;V(\Omega^\var))$. In addition to that, if $\dot{\fb}^{i,\var}\in L^{2}(0,T; L^2(\Omega^\var)) $, $\dot{\bh}^{i,\var}\in L^{2}(0,T; L^2(\Gamma_1^\var))$, then $\bu^\var\in H^{2}(0,T;V(\Omega^\var))$.
\end{theorem}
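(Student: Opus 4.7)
My plan is to recast Problem \ref{problema_eps} as a linear abstract Cauchy problem of the form $\dot{\bu}^\var + \mathcal{A}^\var \bu^\var = \bbf^\var(t)$ in the Hilbert space $V(\Omega^\var)$ and invoke standard semigroup/variation-of-constants arguments. First I would establish the functional-analytic setting: verify that the bilinear forms
\[
b^\var(\bw,\bv):=\int_{\Omega^\var} B^{ijkl,\var}e^\var_{k||l}(\bw)\,e^\var_{i||j}(\bv)\sqrt{g^\var}\,dx^\var,\qquad a^\var(\bw,\bv):=\int_{\Omega^\var} A^{ijkl,\var}e^\var_{k||l}(\bw)\,e^\var_{i||j}(\bv)\sqrt{g^\var}\,dx^\var,
\]
are symmetric, continuous on $V(\Omega^\var)$, and coercive. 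Continuity follows from the explicit forms \eqref{TensorAeps}--\eqref{TensorBeps} together with the regularity of $\bg^{i,\var}$ granted by Theorem \ref{var_0} for $\var\leq\var_0$; coercivity follows from the uniform positive-definiteness of the fourth-order tensors (using $\mu>0$ and $\rho>0$) combined with the three-dimensional Korn inequality in curvilinear coordinates on a shell of thickness $2\var$ (see Theorem 1.7-4 in \cite{Ciarlet4b}). Note that the weight $\sqrt{g^\var}$ is bounded above and below by positive constants on $\bar{\Omega}^\var$.

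Next I would reformulate the problem abstractly. Since $b^\var$ is an equivalent inner product on $V(\Omega^\var)$, the Riesz representation theorem yields a continuous linear operator $\mathcal{A}^\var:V(\Omega^\var)\to V(\Omega^\var)$ characterized by $b^\var(\mathcal{A}^\var\bw,\bv)=a^\var(\bw,\bv)$ and an element $\bbf^\var(t)\in V(\Omega^\var)$ characterized by
\[
b^\var(\bbf^\var(t),\bv) = \int_{\Omega^\var} f^{i,\var}(t)v_i\sqrt{g^\var}\,dx^\var + \int_{\Gamma^\var_+\cup\Gamma^\var_-} h^{i,\var}(t)v_i\sqrt{g^\var}\,d\Gamma^\var.
\]
Under the integrability hypotheses on $f^{i,\var}$ and $h^{i,\var}$, the trace theorem gives $\bbf^\var\in L^2(0,T;V(\Omega^\var))$. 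Problem \ref{problema_eps} is then equivalent to the Cauchy problem $\dot{\bu}^\var(t)+\mathcal{A}^\var\bu^\var(t)=\bbf^\var(t)$ \ $a.e.\ t\in(0,T)$, $\bu^\var(0)=\bu_0^\var$.

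Because $\mathcal{A}^\var$ is a bounded linear operator on the Hilbert space $V(\Omega^\var)$, the operator family $\{e^{-t\mathcal{A}^\var}\}_{t\geq 0}$ is a uniformly continuous group, and the variation-of-constants formula
\[
\bu^\var(t)= e^{-t\mathcal{A}^\var}\bu_0^\var + \int_0^t e^{-(t-s)\mathcal{A}^\var}\bbf^\var(s)\,ds
\]
produces a unique $\bu^\var\in H^1(0,T;V(\Omega^\var))$ solving the Cauchy problem, the $H^1$-in-time regularity following from $\bbf^\var\in L^2(0,T;V(\Omega^\var))$. Uniqueness can be checked independently by subtracting two solutions, testing with $\bv=\dot{\bu}^\var_1-\dot{\bu}^\var_2$, and applying Gr\"onwall's lemma to $b^\var(\bu^\var_1-\bu^\var_2,\bu^\var_1-\bu^\var_2)$. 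For the higher regularity statement, I would formally differentiate the equation in time to obtain an analogous Cauchy problem for $\dot{\bu}^\var$ with forcing in $L^2(0,T;V(\Omega^\var))$ (coming from $\dot{\fb}^{i,\var}$ and $\dot{\bh}^{i,\var}$) and initial datum $\dot{\bu}^\var(0)=\bbf^\var(0)-\mathcal{A}^\var\bu_0^\var\in V(\Omega^\var)$; the same argument then yields $\dot{\bu}^\var\in H^1(0,T;V(\Omega^\var))$, i.e.\ $\bu^\var\in H^2(0,T;V(\Omega^\var))$.

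The main technical obstacle I foresee is not the abstract ODE step, which is routine, but rather the verification of coercivity of $b^\var$ and $a^\var$: one must invoke the three-dimensional Korn inequality in curvilinear coordinates, whose constant a priori degenerates as $\var\to 0$. This is acceptable here because the theorem is stated at fixed $\var\in(0,\var_0]$ (the $\var$-uniform control is the subject of later sections), but one must still check that the mapping $\bTheta$ restricted to $\bar{\Omega}^\var$ is a $\mathcal{C}^1$-diffeomorphism, which is precisely what Theorem \ref{var_0} guarantees, so that the curvilinear Korn inequality applies on $\Omega^\var$ for each fixed $\var\leq\var_0$.
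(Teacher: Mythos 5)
The paper does not give a proof of this theorem itself; it cites Theorem~4.7 of its own earlier reference \cite{intro}, so a line-by-line comparison is not possible from the material at hand. That said, your argument is the standard and essentially canonical one for Kelvin--Voigt viscoelasticity, and it is correct: because the ``viscous'' bilinear form $b^\var$ is a continuous and coercive symmetric form on $V(\Omega^\var)$ (coercivity uses the uniform positive-definiteness of $B^{ijkl,\var}$, hence tacitly $\rho>0$, as already required by the ellipticity estimate \eqref{elipticidadB_eps}, together with Korn's inequality on the fixed thin domain $\Omega^\var$ for $\var\le\var_0$), it defines an equivalent inner product on $V(\Omega^\var)$, and the variational problem becomes a linear Cauchy problem $\dot{\bu}^\var+\mathcal{A}^\var\bu^\var=\bbf^\var$ with $\mathcal{A}^\var\in\mathcal{L}(V(\Omega^\var))$. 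The exponential group and Duhamel's formula then give a unique solution in $C([0,T];V(\Omega^\var))$, and reading off $\dot{\bu}^\var=-\mathcal{A}^\var\bu^\var+\bbf^\var\in L^2(0,T;V(\Omega^\var))$ gives the $H^1$ regularity; when $\dot{\fb}^{i,\var},\dot{\bh}^{i,\var}\in L^2(0,T;L^2)$, one has $\bbf^\var\in H^1(0,T;V(\Omega^\var))$, hence $\mathcal{A}^\var\bu^\var+\bbf^\var\in H^1(0,T;V(\Omega^\var))$ and $\bu^\var\in H^2(0,T;V(\Omega^\var))$. (In fact you do not even need to re-solve a Cauchy problem for $\dot{\bu}^\var$; just differentiate the identity $\dot{\bu}^\var=-\mathcal{A}^\var\bu^\var+\bbf^\var$ once more.) Two small remarks: for uniqueness, Gr\"onwall is not needed---testing the homogeneous equation with $\bw=\bu_1^\var-\bu_2^\var$ gives $\tfrac12\tfrac{d}{dt}b^\var(\bw,\bw)=-a^\var(\bw,\bw)\le 0$ and $\bw(0)=\bcero$, so $\bw\equiv\bcero$ directly; and it is worth stating explicitly that the hypothesis $\rho>0$ (rather than merely $\rho\ge 0$ as the general constitutive discussion allows) is what makes $b^\var$ coercive, since without it $\mathcal{A}^\var$ would not be defined as a bounded operator and the whole reduction would fail.
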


\section{The scaled three-dimensional shell problem}\setcounter{equation}{0} \label{seccion_dominio_ind}

For convenience, we consider a reference domain independent of the small parameter $\var$. Hence, let us define the three-dimensional domain $\Omega:=\omega \times (-1, 1) $ and  its boundary $\Gamma=\d\Omega$. We also define the following parts of the boundary,
\begin{align*}
\Gamma_+:=\omega\times \{1\}, \quad \Gamma_-:= \omega\times \{-1\},\quad \Gamma_0:=\gamma_0\times[-1,1].
\end{align*}
Let $\bx=(x_1,x_2,x_3)$ be a generic point in $\bar{\Omega}$ and we consider the notation $\d_i$ for the partial derivative with respect to $x_i$. We define the following projection map, 
\begin{align*}
\pi^\varepsilon:\bx=(x_1,x_2,x_3)\in \bar{\Omega} \longrightarrow \pi^\varepsilon(\bx)=\bx^\varepsilon=(x_1^\var,x_2^\var,x_3^\var)=(x_1,x_2,\varepsilon x_3)\in \bar{\Omega}^\varepsilon,
\end{align*}
hence, $\d_\alpha^\varepsilon=\d_\alpha $  and $\d_3^\varepsilon=\frac1{\varepsilon}\d_3$. We consider the scaled unknown $\bu(\varepsilon)=(u_i(\varepsilon)):[0,T]\times \bar{\Omega}\longrightarrow \mathbb{R}^3$ and the scaled vector fields $\bv=(v_i):\bar{\Omega}\longrightarrow \mathbb{R}^3 $ defined as
\begin{align*}
u_i^\varepsilon(t,\bx^\varepsilon)=:u_i(\varepsilon)(t,\bx) \ \textrm{and} \ v_i^\varepsilon(\bx^\varepsilon)=:v_i(\bx) \ \forall \bx^\varepsilon=\pi^\varepsilon(\bx)\in \bar{\Omega}^\varepsilon, \ \forall \ t\in[0,T].
\end{align*}

Also, let the functions, $\Gamma_{ij}^{p,\varepsilon}, g^\varepsilon, A^{ijkl,\varepsilon}, B^{ijkl,\varepsilon}$ defined in (\ref{simbolos3D}), (\ref{g}), (\ref{TensorAeps}) and (\ref{TensorBeps}), be associated with the functions $\Gamma_{ij}^p(\varepsilon), g(\varepsilon), A^{ijkl}(\varepsilon), B^{ijkl}(\varepsilon)$ defined by
\begin{align} \label{escalado_simbolos}
&\Gamma_{ij}^p(\varepsilon)(\bx):=\Gamma_{ij}^{p,\varepsilon}(\bx^\varepsilon),
\\\label{escalado_g}
& g(\varepsilon)(\bx):=g^\varepsilon(\bx^\varepsilon),
\\\label{tensorA_escalado}
& A^{ijkl}(\varepsilon)(\bx):=A^{ijkl,\varepsilon}(\bx^\varepsilon),
\\\label{tensorB_escalado}
& B^{ijkl}(\varepsilon)(\bx):=B^{ijkl,\varepsilon}(\bx^\varepsilon),
\end{align}
for all $\bx^\varepsilon=\pi^\varepsilon(\bx)\in\bar{\Omega}^\varepsilon$. For all $\bv=(v_i)\in [H^1(\Omega)]^3$, let there be associated the scaled linearized strains components $\eij(\var;\bv)\in L^2(\Omega)$, defined by
\begin{align} \label{eab}
&\eab(\varepsilon;\bv):=\frac{1}{2}(\d_\beta v_\alpha + \d_\alpha v_\beta) - \Gamma_{\alpha\beta}^p(\varepsilon)v_p,\\ \label{eatres}
& \eatres(\varepsilon;\bv):=\frac{1}{2}\left(\frac{1}{\var}\d_3 v_\alpha + \d_\alpha v_3\right) - \Gamma_{\alpha 3}^p(\varepsilon)v_p,\\ \label{edtres}
& \edtres(\varepsilon;\bv):=\frac1{\varepsilon}\d_3v_3.
\end{align}
Note that with these definitions it is verified that
$
\eij^\var(\bv^\var)(\pi^\var(\bx))=\eij(\var;\bv)(\bx)$ $\forall\bx\in\Omega.
$

\begin{remark} The functions $\Gamma_{ij}^p(\varepsilon), g(\varepsilon), A^{ijkl}(\varepsilon), B^{ijkl}(\varepsilon)$ converge in $\mathcal{C}^0(\bar{\Omega})$ when $\varepsilon$ tends to zero.
\end{remark}

\begin{remark} 
	When we consider
	$\varepsilon=0$ the functions will be defined with respect to $\by\in\bar{\omega}$. Notice that (\ref{eatres}) and (\ref{edtres}) are not defined in that limit case, leading to a singular perturbation problem.  This fact motivates the use of asymptotic methods  for these kind of problems.
	
	Besides, we shall distinguish the three-dimensional Christoffel symbols from the two-dimensional ones by using  $\Gamma_{\alpha \beta}^\sigma(\varepsilon)$ and $ \Gamma_{\alpha\beta}^\sigma$, respectively.
\end{remark}

The next result is an adaptation of $(b)$ in Theorem 3.3-2, \cite{Ciarlet4b} to the viscoelastic case.  We will study the asymptotic behaviour of the scaled contravariant components $A^{ijkl}(\var), B^{ijkl}(\var)$ of the three-dimensional elasticity and viscosity tensors defined in (\ref{tensorA_escalado})--(\ref{tensorB_escalado}), as $\var\rightarrow0$.  We show their uniform positive definiteness  not only with respect to $\bx\in\bar{\Omega}$, but also with respect to $\var$, $0<\var\leq\var_0$. Finally, their limits are functions of $\by\in\bar{\omega}$ only, that is, independent of the transversal variable $x_3$.

\begin{theorem} \label{Th_comportamiento asintotico}
	Let $\omega$  be a domain in $\mathbb{R}^2$ and let $\btheta\in\mathcal{C}^2(\bar{\omega};\mathbb{R}^3)$ be an injective mapping such that the two vectors $\ba_\alpha=\d_\alpha\btheta$ are linearly independent at all points of $\bar{\omega}$, let $a^{\alpha\beta}$ denote the contravariant components of the metric tensor of $S=\btheta(\bar{\omega})$. In addition to that, let the other assumptions on the mapping $\btheta$ and the definition of $\var_0$ be as in Theorem \ref{var_0}. The contravariant components $A^{ijkl}(\var), B^{ijkl}(\var)$ of the scaled three-dimensional elasticity and viscosity tensors, respectively, defined in (\ref{tensorA_escalado})--(\ref{tensorB_escalado}) satisfy
	\begin{align*}
	A^{ijkl}(\var)= A^{ijkl}(0) + \mathcal{O}(\var) \ \textrm{and} \ A^{\alpha\beta\sigma 3}(\var)=A^{\alpha 3 3 3}(\var)=0, \\
	B^{ijkl}(\var)= B^{ijkl}(0) + \mathcal{O}(\var) \ \textrm{and} \ B^{\alpha\beta\sigma 3}(\var)=B^{\alpha 3 3 3}(\var)=0 ,
	\end{align*}
	for all $\var$, $0<\var \leq \var_0$, 
	and
	\begin{align*}
	A^{\alpha\beta\sigma\tau}(0)&= \lambda a^{\alpha\beta}a^{\sigma\tau} + \mu(a^{\alpha\sigma}a^{\beta\tau} + a^{\alpha\tau}a^{\beta\sigma}), & A^{\alpha\beta 3 3}(0)&= \lambda a^{\alpha\beta},
	\\
	A^{\alpha 3\sigma 3}(0)&=\mu a^{\alpha\sigma} ,& A^{33 3 3}(0)&= \lambda + 2\mu,
	\\
	A^{\alpha\beta\sigma 3}(0) &=A^{\alpha 333}(0)=0,
	\\
	B^{\alpha\beta\sigma\tau}(0)&= \theta a^{\alpha\beta}a^{\sigma\tau} + \frac{\rho}{2}(a^{\alpha\sigma}a^{\beta\tau} + a^{\alpha\tau}a^{\beta\sigma}),& B^{\alpha\beta 3 3}(0)&= \theta a^{\alpha\beta},
	\\
	B^{\alpha 3\sigma 3}(0)&=\frac{\rho}{2} a^{\alpha\sigma} ,& B^{33 3 3}(0)&= \theta + \rho, 
	\\
	B^{\alpha\beta\sigma 3}(0) &=B^{\alpha 333}(0)=0.
	\end{align*}
	
	Moreover, there exist two constants $C_e>0$ and $C_v>0$, independent of the variables and $\var$, such that 
	\begin{align} \label{elipticidadA_eps}
	\sum_{i,j}|t_{ij}|^2\leq C_e A^{ijkl}(\varepsilon)(\bx)t_{kl}t_{ij},\\\label{elipticidadB_eps}
	\sum_{i,j}|t_{ij}|^2 \leq C_v B^{ijkl}(\varepsilon)(\bx)t_{kl}t_{ij},
	\end{align}
	for all $\var$, $0<\var\leq\var_0$, for all $\bx\in\bar{\Omega}$ and all $\bt=(t_{ij})\in\mathbb{S}^3$.
\end{theorem}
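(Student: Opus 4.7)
The underlying strategy is to push all information down to the scaled covariant metric components $g_{ij}(\varepsilon)$, invert, and then substitute into the algebraic formulas defining the tensors. Because $\bTheta(\by,x_3^\varepsilon) = \btheta(\by) + x_3^\varepsilon \ba_3(\by)$, a direct computation gives $\bg_\alpha^\varepsilon = \ba_\alpha + x_3^\varepsilon\, \d_\alpha \ba_3$ and $\bg_3^\varepsilon = \ba_3$; after rescaling via $\pi^\varepsilon$ this becomes $\bg_\alpha(\varepsilon)(\bx) = \ba_\alpha(\by) + \varepsilon x_3\, \d_\alpha \ba_3(\by)$ and $\bg_3(\varepsilon)(\bx) = \ba_3(\by)$. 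Taking inner products and using $\ba_\alpha \cdot \ba_3 = 0$, $\ba_3 \cdot \ba_3 = 1$, one reads off $g_{\alpha\beta}(\varepsilon) = a_{\alpha\beta} + \mathcal{O}(\varepsilon)$, $g_{\alpha 3}(\varepsilon) \equiv 0$, $g_{33}(\varepsilon) \equiv 1$, where the $\mathcal{O}(\varepsilon)$ term is uniform on $\bar{\Omega}$ since $\btheta \in \mathcal{C}^2(\bar\omega;\mathbb{R}^3)$.

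Next I would invert the block-diagonal structure: because $g_{\alpha 3}(\varepsilon)$ and $g_{33}(\varepsilon)$ are explicit, Cramer's rule applied to the $(2\times 2)$ block $g_{\alpha\beta}(\varepsilon)$ yields $g^{\alpha\beta}(\varepsilon) = a^{\alpha\beta} + \mathcal{O}(\varepsilon)$, while $g^{\alpha 3}(\varepsilon) = 0$ and $g^{33}(\varepsilon) = 1$ hold identically for all $\varepsilon \in (0,\varepsilon_0]$. Substituting these expansions into the algebraic definitions (\ref{TensorAeps})--(\ref{TensorBeps}) composed with $\pi^\varepsilon$ proves the expansions $A^{ijkl}(\varepsilon) = A^{ijkl}(0) + \mathcal{O}(\varepsilon)$ and analogously for $B^{ijkl}(\varepsilon)$, together with the claimed explicit limit formulas and the exact vanishing of $A^{\alpha\beta\sigma 3}(\varepsilon)$, $A^{\alpha 333}(\varepsilon)$, $B^{\alpha\beta\sigma 3}(\varepsilon)$, $B^{\alpha 333}(\varepsilon)$; these come from the fact that every surviving term contains an odd number of factors carrying a single index $3$ and therefore picks up a vanishing $g^{\alpha 3}(\varepsilon)$.

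For the uniform ellipticity I would argue in two steps. First, at $\varepsilon = 0$, the tensor $A^{ijkl}(0)$ is the usual isotropic elasticity tensor expressed in the orthonormal-like basis built from $(\ba^\alpha, \ba^3)$ with Lamé coefficients $\lambda \geq 0$, $\mu > 0$; the classical inequality $\sum_{i,j} |t_{ij}|^2 \leq C A^{ijkl}(0)(\by) t_{kl} t_{ij}$ holds with a constant depending only on $\mu$ and the minimum eigenvalue of $(a^{\alpha\beta}(\by))$, which is bounded below uniformly on $\bar\omega$ since $\btheta$ is a $\mathcal{C}^2$ immersion on a compact set. The same holds for $B^{ijkl}(0)$ using $\rho > 0$ (or we write it for $B$ via analogous reasoning using $\rho/2 > 0$; if $\rho > 0$ is assumed in place of $\rho \geq 0$, the argument is identical). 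Second, by the expansion proved above, $A^{ijkl}(\varepsilon)(\bx) \to A^{ijkl}(0)(\by)$ uniformly in $\bx \in \bar\Omega$ as $\varepsilon \to 0$; a compactness argument on the compact set $[0,\varepsilon_0] \times \bar\Omega \times \{\bt \in \mathbb{S}^3 : |\bt|=1\}$ then yields a single constant $C_e$ valid for all $\varepsilon \in (0,\varepsilon_0]$, $\bx \in \bar\Omega$, $\bt \in \mathbb{S}^3$, and similarly for $C_v$.

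The only delicate point is producing the \emph{uniform} constants $C_e, C_v$ independent of $\varepsilon$; everything else is algebraic bookkeeping. This uniformity is obtained by combining the uniform continuity of $(\varepsilon,\bx) \mapsto A^{ijkl}(\varepsilon)(\bx)$ on the compact cylinder $[0,\varepsilon_0]\times\bar\Omega$ (which in turn rests on $\btheta \in \mathcal{C}^2(\bar\omega;\mathbb{R}^3)$ and $\det(g_{ij}(\varepsilon)) > 0$ uniformly, guaranteed by Theorem \ref{var_0}) with the strict pointwise ellipticity at each $(\varepsilon,\bx)$. Once this is established, the statement follows immediately.
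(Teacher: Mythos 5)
Your proposal is correct and fills in the details that the paper itself omits (the paper simply refers to Theorem 3.3-2 of Ciarlet's \emph{Mathematical Elasticity, Vol.\ III} and adds a remark that the argument for $B^{ijkl}(\varepsilon)$ is identical with the Lam\'e constants replaced by the viscosity coefficients). Your computation of $\bg_\alpha(\varepsilon)=\ba_\alpha+\varepsilon x_3\,\d_\alpha\ba_3$, $\bg_3(\varepsilon)=\ba_3$, the resulting exact identities $g_{\alpha 3}(\varepsilon)\equiv 0$, $g_{33}(\varepsilon)\equiv 1$, $g^{\alpha 3}(\varepsilon)\equiv 0$, $g^{33}(\varepsilon)\equiv 1$, and the parity argument for the vanishing of $A^{\alpha\beta\sigma 3}(\varepsilon)$, $A^{\alpha 333}(\varepsilon)$ and their $B$-analogues are all exactly what Ciarlet's proof does. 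Where you diverge slightly is in the uniform ellipticity step: Ciarlet argues directly that $A^{ijkl}(\varepsilon)(\bx)t_{kl}t_{ij}\geq 2\mu\, g^{ik}(\varepsilon)g^{jl}(\varepsilon)t_{kl}t_{ij}$ and bounds this below by a uniform multiple of $|\bt|^2$ using a uniform lower bound on the smallest eigenvalue of $(g^{ij}(\varepsilon)(\bx))$ over $[0,\varepsilon_0]\times\bar\Omega$, whereas you first establish ellipticity at $\varepsilon=0$ and then invoke continuity plus a compactness argument on $[0,\varepsilon_0]\times\bar\Omega\times\{|\bt|=1\}$. Both routes are valid; your compactness version does, however, need you to make explicit that $A^{ijkl}(\varepsilon)(\bx)t_{kl}t_{ij}>0$ for $\bt\neq\bcero$ at \emph{every} $(\varepsilon,\bx)$, not just at $\varepsilon=0$, which follows from positive-definiteness of $(g^{ij}(\varepsilon)(\bx))$ (guaranteed by Theorem \ref{var_0}) and $\lambda\geq 0$, $\mu>0$; you gesture at this but do not spell it out. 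You are also right to flag that the stated hypothesis $\rho\geq 0$ is insufficient for \eqref{elipticidadB_eps}: if $\rho=0$ then $B^{ijkl}(\varepsilon)t_{kl}t_{ij}=\theta\bigl(g^{ij}(\varepsilon)t_{ij}\bigr)^2$ vanishes on trace-free $\bt$, so the ellipticity inequality for $B$ genuinely requires $\rho>0$ (consistent with the authors' later assumption $\rho>0$ when solving the ODE \eqref{ecuacion_casuistica1}).
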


\begin{remark}
	Note that the proof of the ellipticity for the scaled viscosity tensor $\left(B^{ijkl}(\varepsilon)\right)$ would follow the steps of the proof of the ellipticity for the elasticity tensor $\left(A^{ijkl}(\var)\right)$ in Theorem 3.3-2, \cite{Ciarlet4b}, since from a quality point of view their expressions differ in replacing the Lam\'e constants by the two viscosity coefficients. 
\end{remark}

Let the scaled applied forces $\bbf^i(\varepsilon):[0,T]\times \Omega\longrightarrow \mathbb{R}^3$ and  $\bbh^i(\varepsilon):[0,T]\times (\Gamma_+\cup\Gamma_-)\longrightarrow \mathbb{R}^3$ be defined by
\begin{align*}
\bbf^\var&=(f^{i,\varepsilon})(t,\bx^\varepsilon)=:\bbf(\var)= (f^i(\varepsilon))(t,\bx) 
\\ \nonumber
&\forall \bx\in\Omega, \ \textrm{where} \ \bx^\varepsilon=\pi^\varepsilon(\bx)\in \Omega^\varepsilon \ \textrm{and} \ \forall t\in[0,T], \\ 
\bbh^\var&=(h^{i,\varepsilon})(t,\bx^\varepsilon)=:\bbh(\var)= (h^i(\varepsilon))(t,\bx) 
\\ \nonumber 
&\forall \bx\in\Gamma_+\cup\Gamma_-, \ \textrm{where} \ \bx^\varepsilon=\pi^\varepsilon(\bx)\in \Gamma_+^\varepsilon\cup\Gamma_-^\varepsilon \ \textrm{and} \ \forall t\in[0,T].
\end{align*}
Also, we introduce $\bu_0(\var): \Omega \longrightarrow \mathbb{R}^3$ by
$
\bu_0(\var)(\bx):=\bu_0^\var(\bx^\var)$  $\forall$ $\bx\in\Omega$,  where  $\bx^\varepsilon=\pi^\varepsilon(\bx)\in \Omega^\varepsilon$ and define the space
\begin{align*} 
V(\Omega):=\{\bv=(v_i)\in [H^1(\Omega)]^3; \bv=\mathbf{0} \ on \ \Gamma_0\},
\end{align*}
which is a Hilbert space, with associated norm denoted by $\left\| \cdot\right\| _{1,\Omega}$.

We assume that the scaled applied forces are given by
\begin{align*} 
& \bbf(\varepsilon)(t, \bx)=\varepsilon^p\bbf^p(t,\bx) \ \forall \bx\in \Omega \ \textrm{and} \ \forall t\in[0,T], \\ 
& \bbh(\varepsilon)(t, \bx)=\varepsilon^{p+1}\bbh^{p+1}(t,\bx) \ \forall  \bx\in \Gamma_+\cup\Gamma_- \ \textrm{and} \ \forall t\in[0,T],
\end{align*}
where $\bbf^p$ and $\bbh^{p+1}$ are functions independent of $\var$ and where $p$ is a natural number that will show the order of the volume and surface forces, respectively. Then, the scaled variational problem  can   be written as follows:
\begin{problem}\label{problema_orden_fuerzas}
	Find $\bu(\varepsilon):[0,T]\times\Omega\longrightarrow \mathbb{R}^3$ such that,
	\begin{align} \nonumber
	& \bu(\varepsilon)(t,\cdot)\in V(\Omega) \forallt, \\ \nonumber
	&\int_{\Omega}A^{ijkl}(\varepsilon)e_{k||l}(\varepsilon;\bu(\varepsilon))e_{i||j}(\varepsilon;\bv)\sqrt{g(\varepsilon)} dx
	+ \int_{\Omega} B^{ijkl}(\varepsilon)e_{k||l}(\varepsilon;\dot{\bu}(\varepsilon))e_{i||j}(\varepsilon;\bv) \sqrt{g(\varepsilon)}  dx
	\\ \nonumber 
	&\quad= \int_{\Omega} \var^p \fb^{i,p}v_i \sqrt{g(\varepsilon)} dx + \int_{\Gamma_+\cup\Gamma_-} \var^{p}\bh^{i,p+1} v_i\sqrt{g(\varepsilon)}  d\Gamma 
	\quad \forall \bv\in V(\Omega), \aes,
	\\\displaystyle \nonumber
	& \bu(\var)(0,\cdot)= \bu_0(\var)(\cdot).
	\end{align} 
\end{problem}
From now on,   for each $\var>0$, we shall use  the shorter notation $\eij(\var)\equiv\eij(\varepsilon;\bu(\varepsilon))$ and $\deij(\var)\equiv\eij(\varepsilon;\dot{\bu}(\varepsilon))$, for its time derivative. 
Analogously to Theorem \ref{Thexistunic}, we can prove the existence of an unique solution $\bu(\var)\in H^{1}(0,T;V(\Omega))$ (or $\bu(\var)\in H^{2}(0,T;V(\Omega))$, respectively) of the Problem \ref{problema_orden_fuerzas}  (see Theorem 4.7, \cite{intro2}) for each $\var>0$.


\section{Technical preliminaries}\setcounter{equation}{0} \label{preliminares}

Concerning geometrical and mechanical preliminaries, we shall present some theorems, which will be used in the following sections. 
First, we recall the Theorem 3.3-1, \cite{Ciarlet4b}.

\begin{theorem} \label{Th_simbolos2D_3D}
	Let $\omega$ be a domain in $\mathbb{R}^2$, let $\btheta\in\mathcal{C}^3(\bar{\omega};\mathcal{R}^3)$ be an injective mapping such that the two vectors $\ba_\alpha=\d_\alpha\btheta$ are linearly independent at all points of $\bar{\omega}$ and let $\var_0>0$ be as in Theorem \ref{var_0}. The functions $\Gamma^p_{ij}(\var)=\Gamma^p_{ji}(\var)$ and $g(\var)$ are defined in (\ref{escalado_simbolos})--(\ref{escalado_g}), the functions $b_{\alpha\beta}, b_\alpha^\sigma, \Gamma_{\alpha\beta}^\sigma,a$, are defined in Section \ref{problema} and the covariant derivatives $b_\beta^\sigma|_\alpha$ are defined by
	\begin{align} \nonumber 
	b_\beta^\sigma|_\alpha:=\d_\alpha b_\beta^\sigma +\Gamma^\sigma_{\alpha\tau}b_\beta^\tau - \Gamma^\tau_{\alpha\beta}b^\sigma_\tau.
	\end{align}
	The functions $b_{\alpha\beta}, b_\alpha^\sigma, \Gamma_{\alpha\beta}^\sigma, b_\beta^\sigma|_\alpha$ and $a$ are identified with functions in $\mathcal{C}^0(\bar{\Omega})$. Then
	\begin{align*}
	\begin{aligned}[c]
	\Gamma_{\alpha\beta}^\sigma(\var)&=  \Gamma_{\alpha\beta}^\sigma -\var x_3b_\beta^\sigma|_\alpha + \mathcal{O}(\var^2), \\
	\d_3 \Gamma_{\alpha\beta}^p(\var)&= \mathcal{O}(\var), 
	\\
	\Gamma_{\alpha3}^3(\var)&=\Gamma_{33}^p(\var)=0,
	\end{aligned}
	\qquad
	\begin{aligned}[c]
	\Gamma_{\alpha\beta}^3(\var)&=b_{\alpha\beta} - \var x_3 b_\alpha^\sigma b_{\sigma\beta}, 
	\\
	\Gamma_{\alpha3}^\sigma(\var)& = -b_\alpha^\sigma - \var x_3 b_\alpha^\tau b_\tau^\sigma + \mathcal{O}(\var^2), 
	\\
	g(\varepsilon)&=a + \mathcal{O}(\varepsilon),
	\end{aligned}
	\end{align*}
	for all $\var$, $0<\var\leq\var_0$, where the order symbols $\mathcal{O}(\var)$ and $\mathcal{O}(\var^2)$  are meant with respect to the norm $\left\| \cdot\right\| _{0,\infty,\bar{\Omega}}$ defined by
	$
	\left\| w\right\| _{0,\infty,\bar{\Omega}}=\sup \{|w(\bx)|; \bx\in\bar{\Omega}\}.
	$
	Finally, there exist constants $a_0, g_0$ and $g_1$ such that
	\begin{align} \nonumber
	& 0<a_0\leq a(\by) \ \forall \by\in \bar{\omega},
	\\ \label{g_acotado}
	& 0<g_0\leq g(\varepsilon)(\bx) \leq g_1 \ \forall \bx\in\bar{\Omega} \ \textrm{and} \ \forall \ \var, 0<\varepsilon\leq \varepsilon_0.
	\end{align}
\end{theorem}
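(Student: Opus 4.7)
The plan is to carry out direct Taylor expansions in $\varepsilon$ of all objects built from the three-dimensional chart $\bTheta$, exploiting the two classical surface identities: the Gauss formula $\d_\beta\ba_\alpha=\Gamma_{\alpha\beta}^\sigma\ba_\sigma+b_{\alpha\beta}\ba_3$ and the Weingarten formula $\d_\alpha\ba_3=-b_\alpha^\sigma\ba_\sigma$. First I would compute the covariant basis vectors $\bg_i^\var=\d_i^\var\bTheta$. By definition $\bg_\alpha^\var=\ba_\alpha+x_3^\var\d_\alpha\ba_3=\ba_\alpha-x_3^\var b_\alpha^\sigma\ba_\sigma$ and $\bg_3^\var=\ba_3$. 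After applying the scaling $\pi^\var$ (so that $x_3^\var=\var x_3$), this yields the scaled basis $\bg_\alpha(\var)=\ba_\alpha-\var x_3b_\alpha^\sigma\ba_\sigma$ and $\bg_3(\var)=\ba_3$, both valued in $\mathcal{C}^0(\bar\Omega)$.

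From these I can compute the covariant metric components exactly:
\begin{align*}
g_{\alpha\beta}(\var)&=a_{\alpha\beta}-2\var x_3b_{\alpha\beta}+\var^2x_3^2 b_\alpha^\sigma b_{\sigma\beta},\\
g_{\alpha 3}(\var)&=0, \qquad g_{33}(\var)=1,
\end{align*}
which immediately gives $g(\var)=\det(g_{ij}(\var))=\det(g_{\alpha\beta}(\var))=a+\mathcal{O}(\var)$ in $\|\cdot\|_{0,\infty,\bar\Omega}$. Inverting the $2\times 2$ upper block and using the block-diagonal structure gives the contravariant components $g^{\alpha\beta}(\var)=a^{\alpha\beta}+2\var x_3 b^{\alpha\beta}+\mathcal{O}(\var^2)$, $g^{\alpha 3}(\var)=0$, $g^{33}(\var)=1$, where $b^{\alpha\beta}=a^{\alpha\sigma}b_\sigma^\beta$.

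Next I would derive the Christoffel symbols from $\Gamma^{p}_{ij}(\var)=\bg^{p}(\var)\cdot\d_i^\var\bg_j^\var$, where the scaled derivatives use $\d_3^\var=\tfrac1\var\d_3$. The identities $\Gamma_{\alpha 3}^3(\var)=0$ and $\Gamma_{33}^p(\var)=0$ come for free: the first from $\ba_3\cdot\d_\alpha\ba_3=\tfrac12\d_\alpha|\ba_3|^2=0$ together with $\bg_3(\var)=\ba_3$ independent of $x_3$, and the second from $\d_3\bg_3(\var)=0$. For the remaining symbols I substitute the expansions of $\bg_j(\var)$ and $\bg^p(\var)$ and expand. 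The delicate step will be verifying that the first-order coefficient of $\Gamma_{\alpha\beta}^\sigma(\var)$ is precisely $-x_3 b_\beta^\sigma|_\alpha$. This requires combining $\d_\beta\ba_\alpha=\Gamma_{\alpha\beta}^\tau\ba_\tau+b_{\alpha\beta}\ba_3$ (Gauss), the Weingarten identity, and grouping terms so that the combination $\d_\alpha b_\beta^\sigma+\Gamma_{\alpha\tau}^\sigma b_\beta^\tau-\Gamma_{\alpha\beta}^\tau b_\tau^\sigma$ appears; this is exactly the definition of $b_\beta^\sigma|_\alpha$ given in the statement. The formulas $\Gamma_{\alpha\beta}^3(\var)=b_{\alpha\beta}-\var x_3 b_\alpha^\sigma b_{\sigma\beta}$ and $\Gamma_{\alpha 3}^\sigma(\var)=-b_\alpha^\sigma-\var x_3 b_\alpha^\tau b_\tau^\sigma+\mathcal{O}(\var^2)$ follow similarly. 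The bound $\d_3\Gamma_{\alpha\beta}^p(\var)=\mathcal{O}(\var)$ is then read off directly, since the $\var^0$ coefficient in $\Gamma_{\alpha\beta}^p(\var)$ depends only on $\by$.

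Finally, for the uniform positivity bounds, the estimate $0<a_0\le a(\by)$ follows from the continuity of $a$ on the compact set $\bar\omega$ together with $a(\by)=|\ba_1\wedge\ba_2|^2>0$, itself a consequence of the linear independence of $\ba_\alpha$. For $g(\var)$, the lower bound $g_0>0$ comes from Theorem \ref{var_0}: since $\bTheta$ is a $\mathcal{C}^1$-diffeomorphism of $\bar\Omega_{\var_0}$ onto its image with $\det(\bg_1,\bg_2,\bg_3)>0$ and $g^\var=[\det(\bg_i^\var\cdot\bg_j^\var)]$ restricted to $\bar{\Omega}^\var$, the scaled function $g(\var)$ is continuous and strictly positive on the compact set $\bar\Omega\times[0,\var_0]$, so it attains a positive minimum $g_0$ and a finite maximum $g_1$. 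I expect the main bookkeeping obstacle to be the expansion yielding the covariant derivative $b_\beta^\sigma|_\alpha$; the remaining steps are straightforward term-by-term computations analogous to those in Theorem 3.3-1 of \cite{Ciarlet4b}.
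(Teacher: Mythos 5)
Your proposal is correct and reproduces, step by step, the computational strategy of Theorem 3.3-1 in \cite{Ciarlet4b}, which the paper cites without reproducing a proof of its own. The only place that could use one more sentence is the estimate $\d_3\Gamma_{\alpha\beta}^p(\var)=\mathcal{O}(\var)$: for $p=\sigma$ you should note that the $x_3$-derivative of the $\mathcal{O}(\var^2)$ remainder is itself $\mathcal{O}(\var)$, which holds because the expansions are polynomials in $\var x_3$ with coefficients continuous on $\bar\omega$ thanks to $\btheta\in\mathcal{C}^3$.
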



We now include the following result that will be used repeatedly in what follows (see Theorem 3.4-1, \cite{Ciarlet4b}, for details).

\begin{theorem} \label{th_int_nula}
	Let $\omega$ be a domain in $\mathbb{R}^2$ with boundary $\gamma$, let $\Omega=\omega\times (-1,1)$, and let $g\in L^p(\Omega)$, $p>1$, be a function such that 
	\begin{align*}
	\intO g \d_3v dx=0, \ \textrm{for all} \ v\in \mathcal{C}^{\infty}(\bar{\Omega}) \ \textrm{with} \ v=0 \on \gamma\times[-1,1]. 
	\end{align*}
	Then $g=0.$
\end{theorem}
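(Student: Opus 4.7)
The plan is to exploit the test-function condition in two stages: first restrict to compactly supported test functions to deduce that $g$ is independent of $x_3$, then use test functions whose trace does not vanish on $\Gamma_+\cup\Gamma_-$ to force the remaining two-dimensional function to be zero.

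\emph{Step 1 (independence of $x_3$).} First I would observe that any $v\in\mathcal{C}^\infty_c(\Omega)$ satisfies the hypothesis of the theorem, since such a $v$ vanishes in a neighborhood of the entire boundary, in particular on $\gamma\times[-1,1]$. Therefore the assumption $\int_\Omega g\,\partial_3 v\,dx=0$ for every admissible $v$ gives, in the sense of distributions on $\Omega$, $\partial_3 g = 0$. A standard result (e.g.\ distributional analogue of the Fubini-type argument, or the characterization of distributions with vanishing partial derivative on a product domain) then yields the existence of $G\in L^p(\omega)$ such that
\begin{equation*}
g(y,x_3)=G(y)\quad\text{for a.e. } (y,x_3)\in\Omega.
\end{equation*}

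\emph{Step 2 (the two-dimensional function vanishes).} Next I would plug separable test functions of the form $v(y,x_3)=\varphi(y)\psi(x_3)$, where $\varphi\in\mathcal{C}^\infty(\bar{\omega})$ with $\varphi=0$ on $\gamma$ and $\psi\in\mathcal{C}^\infty([-1,1])$ with, say, $\psi(-1)=0$ and $\psi(1)=1$. Such a $v$ belongs to $\mathcal{C}^\infty(\bar{\Omega})$ and vanishes on $\gamma\times[-1,1]$, so it is admissible. Using Step 1 and Fubini's theorem, the hypothesis becomes
\begin{equation*}
0=\int_\Omega g\,\partial_3 v\,dx=\left(\int_{-1}^{1}\psi'(x_3)\,dx_3\right)\int_\omega G(y)\varphi(y)\,dy=\int_\omega G(y)\varphi(y)\,dy.
\end{equation*}
Since this holds for every $\varphi\in\mathcal{C}^\infty(\bar{\omega})$ vanishing on $\gamma$, and in particular for every $\varphi\in\mathcal{C}^\infty_c(\omega)$, and $\mathcal{C}^\infty_c(\omega)$ is dense in $L^{p'}(\omega)$ with $p'$ the conjugate exponent of $p$, I conclude $G=0$ in $L^p(\omega)$, whence $g=0$ in $L^p(\Omega)$.

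\emph{Main obstacle.} The argument itself is routine once the two stages are separated; the only subtle point is the justification of the ``$g$ independent of $x_3$'' step when $\Omega$ is only a general product of a Lipschitz domain $\omega$ and an interval. I would handle this by localizing: for fixed $\varphi\in\mathcal{C}^\infty_c(\omega)$ and arbitrary $\chi\in\mathcal{C}^\infty_c((-1,1))$, the product $\varphi\,\chi$ is admissible, so $\langle\partial_3 g,\varphi\chi\rangle=0$ in $\mathcal{D}'(\Omega)$; a standard Fubini/mollification argument applied slice by slice in the $x_3$ variable then yields the desired representation $g(y,x_3)=G(y)$. The rest of the proof is essentially the density of admissible traces on $\Gamma_+$ in $L^{p'}(\omega)$.
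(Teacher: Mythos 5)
Your proof is correct. The paper itself gives no proof of this lemma, deferring to Theorem~3.4-1 of Ciarlet's \emph{Theory of Shells}; the argument there also rests on separable test functions $\varphi(y)\psi(x_3)$ and a Fubini-plus-density step, so you are following essentially the same route. The only difference is organizational: the reference lets $\psi$ range over all of $\mathcal{C}^{\infty}([-1,1])$ at once, deduces $\int_{-1}^{1} g(y,x_3)\,w(x_3)\,dx_3=0$ for a dense set of $w$ and a.e.\ $y$, and concludes that each fiber $g(y,\cdot)$ vanishes directly; you instead first isolate $\partial_3 g=0$ (via compactly supported test functions) to reduce to a two-dimensional function $G$, and then finish with a single $\psi$ satisfying $\psi(1)-\psi(-1)=1$. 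Both variants are sound, and both hinge on the same Fubini/slicing argument on the product $\omega\times(-1,1)$ and on density of $\mathcal{C}^{\infty}_c(\omega)$ in $L^{p'}(\omega)$; incidentally, your Step~1 is exactly the content of part~(a) of the paper's Theorem~\ref{Th_medias}, so within this paper you could simply have cited that rather than rederiving it.
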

\begin{remark}
	This result holds if $\intO g \d_3v dx=0$ for all $v\in H^1(\Omega)$ such that $v=0$ in $\Gamma_0$. We will use this result in this way in what follows.
\end{remark}


We now introduce the average with respect to the transversal variable, which plays a major role in this study. To that end, let $\bv$ represent real or vectorial functions defined  almost everywhere over $\Omega=\omega\times (-1,1)$. We define the transversal average by
\begin{align*}
\bar{\bv}(\by)=\frac1{2}\int_{-1}^{1}\bv(\by,x_3)dx_3,
\end{align*}
for almost all $\by\in\omega$.
Given $\beeta=(\eta_i)\in [H^1(\omega)]^3,$ let
\begin{align} \label{def_gab}
\gab(\beeta):= \frac{1}{2}(\d_\beta\eta_\alpha + \d_\alpha\eta_\beta) - \Gamma_{\alpha\beta}^\sigma\eta_\sigma -  b_{\alpha\beta}\eta_3,
\end{align}
denote the covariant components of the linearized change of metric tensor associated with a displacement field $\eta_i\ba^i$ of the surface $S$.
In the next theorem we introduce some results related with the transversal averages that will be useful in what follows.  
\begin{theorem}\label{Th_medias}
	Let $\omega$ be a domain in $\mathbb{R}^2$, let $\Omega=\omega\times(-1,1)$ and $T>0$.
	\begin{enumerate}[label={{(\alph*)}}, leftmargin=0em ,itemindent=3em]
		\item Let $v\in\WLO$. Then $\bar{v}(\by)$ is finite for almost all  $\by\in\omega$,  belongs to $\WLo$, and
		\begin{align}\nonumber
		\left|\bar{v}\right|_{\WLo}\leq\frac{1}{\sqrt{2}}\left| v\right| _{\WLO}.
		\end{align}
		If $\d_3v=0$ in the distributions sense $\left(\int_{\Omega}v \d_3{\varphi} dx=0 \ \forall {\varphi} \in \mathcal{D}(\Omega) \right)$ then $v$ does not depend on $x_3$ and
		\begin{align}\nonumber
		v(\by,x_3)=\bar{v}(\by) \ \textrm{for almost all } \ (\by,x_3)\in\Omega.
		\end{align}
		\item Let $v\in \WHO$. Then $\bar{v}\in \WHo$, $\d_\alpha\bar{v}=\overline{\d_\alpha v}$ and
		\begin{align}\nonumber
		\left\| \bar{v}\right\| _{\WHo}\leq\frac{1}{\sqrt{2}}\left\| v\right\| _{\WHO}.
		\end{align}
		Let $\gamma_0$ be a subset $\d\gamma$-measurable of $\gamma$. If $v=0$ on $\gamma_0\times[-1,1]$ then $\bar{v}=0$ on $\gamma_0$; in particular, $\bar{v}\in H^{1}(0,T;H^1_0(\omega))$ if $v=0$ on $\gamma\times[-1,1]$.
	\end{enumerate}
\end{theorem}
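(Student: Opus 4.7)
\textbf{Proof plan for Theorem \ref{Th_medias}.}

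The plan is to reduce everything to the spatial statements (both parts in fact concern the same kind of estimate applied fibrewise in $t$) and then lift them to the time-dependent setting using the fact that the transversal-average operator $\calQ : v \mapsto \bar{v}$ is linear and continuous from $L^2(\Omega)$ to $L^2(\omega)$. Once this is established, it commutes with the time derivative and with the Bochner integral, so the $H^1$-in-time estimates follow automatically.

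For part (a), I would first fix $t\in[0,T]$ outside a null set and work with $v(t,\cdot)\in L^2(\Omega)$. By Fubini, $v(t,\by,\cdot)\in L^2(-1,1)$ for almost every $\by\in\omega$, so $\bar{v}(t,\by)$ is finite. The key estimate is Cauchy--Schwarz applied in the $x_3$ variable:
\begin{equation*}
|\bar{v}(t,\by)|^2 \le \frac{1}{4}\Bigl(\int_{-1}^{1} 1\,dx_3\Bigr)\Bigl(\int_{-1}^{1} |v(t,\by,x_3)|^2\,dx_3\Bigr) = \frac{1}{2}\int_{-1}^{1}|v(t,\by,x_3)|^2\,dx_3,
\end{equation*}
which integrated over $\omega$ gives $\|\bar v(t,\cdot)\|_{0,\omega}^2\le \tfrac12\|v(t,\cdot)\|_{0,\Omega}^2$. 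Because $v\in H^1(0,T;L^2(\Omega))$, the difference quotient in $t$ belongs to $L^2(0,T;L^2(\Omega))$, and the estimate just derived gives the same control for $\bar v$; passing to the limit identifies $\dot{\bar v}=\overline{\dot v}$ and yields the required seminorm bound. For the last assertion, $\d_3 v=0$ in $\mathcal{D}'(\Omega)$ is exactly Theorem \ref{th_int_nula} (as stated in the remark following it), so $v$ is independent of $x_3$, and then trivially $v=\bar v$ a.e.

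For part (b), the identity $\d_\alpha\bar v=\overline{\d_\alpha v}$ follows by a duality computation against $\varphi\in\mathcal{D}(\omega)$, extending $\varphi$ trivially to $\Omega$:
\begin{equation*}
\int_\omega \bar v\,\d_\alpha\varphi\,dy = \frac{1}{2}\int_\Omega v\,\d_\alpha\varphi\,dx = -\frac{1}{2}\int_\Omega \d_\alpha v\,\varphi\,dx = -\int_\omega \overline{\d_\alpha v}\,\varphi\,dy,
\end{equation*}
so $\d_\alpha\bar v=\overline{\d_\alpha v}\in L^2(\omega)$. Applying part (a) to each of $v,\d_1 v,\d_2 v$ and summing in quadrature gives $\|\bar v\|_{\WHo}\le \tfrac{1}{\sqrt 2}\|v\|_{\WHO}$. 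For the trace statement, if $v=0$ on $\gamma_0\times[-1,1]$, one can either test the trace identity with functions of $\by$ alone (extended in $x_3$) against $\d_\alpha$-type computations, or, more directly, observe that $\bar{v}$ equals the average of $v(\cdot,x_3)|_{\gamma_0}$ over $x_3\in(-1,1)$ by continuity of the trace operator $H^1(\Omega)\to L^2(\gamma\times(-1,1))$ and Fubini; since the integrand vanishes on $\gamma_0$, so does $\bar v|_{\gamma_0}$.

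The main technical obstacle is the interplay between the time variable and the averaging: one must justify that $\calQ$ commutes with $d/dt$ when $v\in H^1(0,T;X)$ for $X=L^2(\Omega)$ or $X=H^1(\Omega)$. I would handle this by viewing $\calQ$ as a bounded linear operator $X\to Y$ (with $Y=L^2(\omega)$, resp.~$H^1(\omega)$) and invoking the standard fact that bounded linear operators commute with the Bochner weak derivative in time; the operator norm bound $1/\sqrt 2$ is then inherited directly. Everything else is a routine Fubini/Cauchy--Schwarz exercise.
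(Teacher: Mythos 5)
Your proposal is correct and takes essentially the approach the paper implicitly relies on: the paper does not spell out a proof but states that Theorem \ref{Th_medias} is a "straightforward" generalization of Theorem 4.2-1 of Ciarlet's book, and the straightforward generalization is precisely what you do — apply the spatial Cauchy--Schwarz/Fubini estimate fibrewise in $t$ and then lift to $H^1(0,T;\cdot)$ by viewing $v\mapsto\bar v$ as a bounded linear operator that commutes with the Bochner weak derivative. One small slip worth flagging: the claim that "$\d_3v=0$ in $\mathcal{D}'(\Omega)$ implies $v$ independent of $x_3$" is not Theorem \ref{th_int_nula} of the paper. Theorem \ref{th_int_nula} is a different, dual statement (that $\int_\Omega g\,\d_3v\,dx=0$ for all admissible test functions $v$ forces $g=0$). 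The fact you actually need — a distribution with vanishing $x_3$-derivative on a product domain does not depend on $x_3$ — is elementary and standard (and indeed part of Ciarlet's Theorem 4.2-1), but it should not be attributed to Theorem \ref{th_int_nula}. With that citation corrected, the argument is complete.
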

Now, we shall introduce two results that will be needed for the convergence result. Given $\bv=(v_i)\in [H^1(\Omega)]^3 $ let us define:
\begin{align*} 
\gab(\bv):=& \frac{1}{2}(\d_\beta v_\alpha + \d_\alpha v_\beta) - \Gamma_{\alpha\beta}^\sigma v_\sigma -  b_{\alpha\beta} v_3,
\\
\rho_{\alpha\beta}(\bv):=& \d_{\alpha\beta}v_3 - \Gamma_{\alpha\beta}^\sigma \d_\sigma v_3 - b_\alpha^\sigma b_{\sigma\beta} v_3 + b_\alpha^\sigma (\d_\beta v_\sigma- \Gamma_{\beta\sigma}^\tau v_\tau) 
\\ \qquad &+ b_\beta^\tau(\d_\alpha v_\tau-\Gamma_{\alpha\tau}^\sigma v_\sigma ) + b^\tau_{\beta|\alpha} v_\tau,
\\
\eab^1(\var;\bv):=& \frac{1}{\var}\gab(\bv) + x_3 b_{\beta|\alpha}^\sigma v_\sigma + x_3b_\alpha^\sigma b_{\sigma\beta} v_3.
\end{align*}

\begin{theorem}\label{Th_521}
	Let the functions $\Gamma^\sigma_{\alpha\beta}, b_{\alpha\beta}, b_\alpha^\beta\in \mathcal{C}^0(\bar{\omega})$ be identified with functions in $\mathcal{C}^0(\bar{\Omega})$ and we consider $\var_0$ defined as in Theorem \ref{var_0}. Then there exists a constant $\tilde{C}>0$ such that for all $\var$, $0<\var\leq \var_0$ and all $\bv\in H^{1}(0,T; [H^1(\Omega)]^3)$, the scaled linearized strains $\eab(\var;\bv)$ satisfy:
	\begin{align*}
	\left|\frac{1}{\var}\eab(\var;\bv) - \eab^1(\var;\bv)\right|_{\WLO}&\leq \tilde{C} \var \sum_{\alpha}\left| v_\alpha\right| _{\WLO},
	\\
	\left\|\frac{1}{\var}\d_3\eab(\var;\bv)+ \rab(\bv)\right\|_{\WHMO}&\leq \tilde{C}\left(\sum_i|\eitres(\var;\bv)|_{\WLO} \right.
	\\
	+ \var  \sum_\alpha|v_\alpha|_{\WLO} &  + \var\left\| v_3\right\| _{\WHO}{\Bigg) }   .
	\end{align*}
\end{theorem}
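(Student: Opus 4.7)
The plan is to prove both estimates by direct computation after substituting the Taylor expansions of the Christoffel symbols provided by Theorem~\ref{Th_simbolos2D_3D}. Since all relevant coefficients ($\Gamma^p_{\alpha\beta}(\var)$, $b^\sigma_\alpha$, $b_{\alpha\beta}$, $b^\sigma_{\beta|\alpha}$) are time-independent and the scaled strains depend linearly on $\bv$, any pointwise-in-$t$ estimate lifts automatically to $H^{1}(0,T;\cdot)$ norms by applying it to $\bv(t,\cdot)$ and to $\dot{\bv}(t,\cdot)$ and integrating. I will therefore focus on the instantaneous (elastic-type) versions.

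For the first inequality, I would plug the expansions $\Gamma^\sigma_{\alpha\beta}(\var) = \Gamma^\sigma_{\alpha\beta} - \var x_3 b^\sigma_{\beta|\alpha} + \mathcal{O}(\var^2)$ and $\Gamma^3_{\alpha\beta}(\var) = b_{\alpha\beta} - \var x_3 b^\sigma_\alpha b_{\sigma\beta}$ directly into the definition (\ref{eab}) of $\eab(\var;\bv)$. This yields $\eab(\var;\bv) = \gab(\bv) + \var [x_3 b^\sigma_{\beta|\alpha} v_\sigma + x_3 b^\sigma_\alpha b_{\sigma\beta} v_3] + \mathcal{O}(\var^2)v_\sigma$, so dividing by $\var$ and subtracting $\eab^1(\var;\bv)$ leaves only an $\mathcal{O}(\var)v_\sigma$ remainder, from which the first bound follows immediately.

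For the second inequality, I would differentiate (\ref{eab}) in $x_3$ to obtain $\d_3\eab(\var;\bv) = \frac{1}{2}(\d_\beta \d_3 v_\alpha + \d_\alpha \d_3 v_\beta) - \d_3\Gamma^p_{\alpha\beta}(\var) v_p - \Gamma^p_{\alpha\beta}(\var)\d_3 v_p$, and then use (\ref{eatres})--(\ref{edtres}) to rewrite the transverse derivatives as $\d_3 v_\sigma = \var[2\eatres(\var;\bv) - \d_\sigma v_3 + 2\Gamma^p_{\sigma 3}(\var)v_p]$ and $\d_3 v_3 = \var\edtres(\var;\bv)$. After dividing by $\var$ the explicit factors of $\var$ cancel, and applying the expansions $\frac{1}{\var}\d_3\Gamma^\sigma_{\alpha\beta}(\var) = -b^\sigma_{\beta|\alpha} + \mathcal{O}(\var)$, $\frac{1}{\var}\d_3\Gamma^3_{\alpha\beta}(\var) = -b^\sigma_\alpha b_{\sigma\beta}$ and $\Gamma^p_{\alpha 3}(\var) v_p = -b^\sigma_\alpha v_\sigma + \mathcal{O}(\var)v_\sigma$ from Theorem~\ref{Th_simbolos2D_3D} makes the second-order term $\d_{\alpha\beta} v_3$ cancel exactly against the $\d_{\alpha\beta}v_3$ appearing in $\rab(\bv)$; the remaining terms, after grouping, reproduce the rest of $-\rab(\bv)$. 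What survives splits into three types: spatial derivatives of $\eitres(\var;\bv)$ (controlled in $H^{-1}(\Omega)$ by $|\eitres(\var;\bv)|_{\WLO}$), $\var$ times pointwise multiples of $v_p$ (giving the $\var|v_\alpha|_{\WLO}$ terms), and $\var$ times first-order derivatives of $v_3$ produced by the $\mathcal{O}(\var)$ corrections to $\Gamma^\sigma_{\alpha\beta}(\var)\d_\sigma v_3$, which yield the $\var\|v_3\|_{\WHO}$ contribution.

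The main obstacle is the algebraic book-keeping when matching the cross terms. After substitution the expression contains $\d_\beta[\Gamma^p_{\alpha 3}(\var)v_p] + \d_\alpha[\Gamma^p_{\beta 3}(\var)v_p]$ and $-2\Gamma^\sigma_{\alpha\beta}(\var)\Gamma^p_{\sigma 3}(\var)v_p$, and these must be reorganized (using the product rule and $\Gamma^p_{\alpha 3}(0) = -b^\sigma_\alpha\delta^p_\sigma$) to reproduce exactly the mixed contributions $b^\sigma_\alpha(\d_\beta v_\sigma - \Gamma^\tau_{\beta\sigma}v_\tau) + b^\tau_\beta(\d_\alpha v_\tau - \Gamma^\sigma_{\alpha\tau}v_\sigma)$ of $\rab(\bv)$, while verifying that all residual $\var$-order terms fall into one of the three admissible classes above. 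Working in $H^{-1}$ rather than $L^2$ is essential here: the corrections produced by the product rule contain first-order spatial derivatives of $v_\sigma$, and these are controlled only after shifting the derivative onto the test function via duality, which is also what allows the $\d_\beta\eatres(\var;\bv)$ terms to be bounded by $|\eatres(\var;\bv)|_{\WLO}$ alone.
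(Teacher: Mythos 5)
Your proposal is correct and takes essentially the same approach as the paper: the paper proves nothing here but remarks (after Theorem \ref{Th_522}) that this is a ``straightforward'' generalization of Theorem 5.2-1 in \cite{Ciarlet4b}, whose proof is precisely the direct substitution of the Taylor expansions from Theorem \ref{Th_simbolos2D_3D} into (\ref{eab})--(\ref{edtres}), the cancellation against $\rab(\bv)$, and the $H^{-1}$-by-duality bookkeeping you describe. Your extra step of lifting the pointwise-in-$t$ estimate to the $H^1(0,T;\cdot)$ norms by applying it to $\bv(t,\cdot)$ and $\dot{\bv}(t,\cdot)$ and integrating is exactly what the paper leaves implicit, and it is valid because the coefficients $\Gamma^p_{\alpha\beta}(\var)$, $b^\sigma_\alpha$, $b_{\alpha\beta}$, $b^\sigma_{\beta|\alpha}$ are time-independent and the maps $\bv\mapsto\eij(\var;\bv)$, $\bv\mapsto\rab(\bv)$ are linear.
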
   

\begin{theorem}\label{Th_522}
	Let $(\bu(\var))_{\var>0}$ be a sequence of functions $\bu(\var)\in\WVO$ that satisfies
	\begin{align*}
	\bu(\var)&\deb\bu \en \WHOt,
	\\
	\frac{1}{\var}\eij(\var;\bu(\var))&\deb \eij^1 \en \WLO,
	\end{align*}
	when $\var\to0.$ Then,
	\begin{enumerate}[label={{(\alph*)}}]
		\item  $\bu$ is independent of the transversal variable $x_3$. 
		\item  $\bar{\bu}\in H^1(\omega)\times H^1(\omega) \times H^2(\omega)$ with $\bar{u}_i=\d_\nu \bar{u}_3=0$ on $\gamma_0$.
		\item $\gab(\bu)=0$.
		\item  $\rab(\bu)\in \WLO$ and $\rab(\bu)=-\d_3\eab^1.$
		
		\item If in addition, there exist functions $\kappa_{\alpha\beta}\in\WHMO$ such that $\rab(\bu(\var))\to\kappa_{\alpha\beta}$ in $\WHMO$ as $\var\to 0$, then
		\begin{align*}
		\bu(\var)&\to \bu \en \WHOt,\\
		\rab(\bu)&=\kappa_{\alpha\beta} \ \textrm{hence,} \ \kappa_{\alpha\beta}\in\WLO.
		\end{align*}
		
	\end{enumerate}
	
\end{theorem}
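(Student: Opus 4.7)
This is the time-dependent counterpart of Theorem 5.2-2 in \cite{Ciarlet4b}, and I would mimic that argument adapted to the Hilbert-valued spaces $\WLO$, $\WHO$, $\WHOt$, $\WHMO$. All operations involved (taking scaled strains, differentiating in $x_i$, contracting with Christoffel symbols) are linear, continuous and time-independent, so the Bochner $H^1(0,T;\cdot)$ setting adds nothing essential once the spatial arguments are in place.

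For part (a), rewrite (\ref{eatres})--(\ref{edtres}) as
\[
\d_3 u_3(\var)=\var^2\,\tfrac{1}{\var}\edtres(\var;\bu(\var)),\quad \d_3 u_\alpha(\var)=2\var^2\,\tfrac{1}{\var}\eatres(\var;\bu(\var))-\var\d_\alpha u_3(\var)+2\var^2\Gamma_{\alpha 3}^p(\var)u_p(\var).
\]
The factors $\frac{1}{\var}\eitres(\var;\bu(\var))$ converge weakly in $\WLO$ (hence are bounded), $\bu(\var)$ is bounded in $\WHOt$, and $\Gamma_{ij}^p(\var)$ is uniformly bounded on $\bar{\Omega}$ by Theorem \ref{Th_simbolos2D_3D}, so both right-hand sides tend weakly to $\bcero$ in $\WLO$. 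Since $\d_3\bu(\var)\deb\d_3\bu$ from the weak convergence in $\WHOt$, uniqueness of weak limits gives $\d_3\bu=\bcero$, and Theorem \ref{Th_medias}(a) concludes that $\bu=\bar{\bu}$ is independent of $x_3$. For part (c), the map $\bv\mapsto\eab(\var;\bv)$ converges in operator norm from $[H^1(\Omega)]^3$ to $L^2(\Omega)$ toward $\gab(\cdot)$ because $\Gamma_{\alpha\beta}^p(\var)\to\Gamma_{\alpha\beta}^p(0)$ in $\mathcal{C}^0(\bar{\Omega})$ with $\Gamma_{\alpha\beta}^\sigma(0)=\Gamma_{\alpha\beta}^\sigma$ and $\Gamma_{\alpha\beta}^3(0)=b_{\alpha\beta}$; hence $\eab(\var;\bu(\var))\deb\gab(\bu)$, while at the same time $\eab(\var;\bu(\var))=\var\cdot\frac{1}{\var}\eab(\var;\bu(\var))\deb 0$, so $\gab(\bu)=0$.

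For part (d), apply the second estimate of Theorem \ref{Th_521} with $\bv=\bu(\var)$: its right-hand side tends to $0$ since $|\eitres(\var;\bu(\var))|_{\WLO}=\var\,|\frac{1}{\var}\eitres(\var;\bu(\var))|_{\WLO}=O(\var)$ and $\bu(\var)$ is bounded in $\WHOt$, so
\[
\tfrac{1}{\var}\d_3\eab(\var;\bu(\var))+\rab(\bu(\var))\to 0 \quad\text{in } \WHMO.
\]
Continuity of $\d_3:\WLO\to\WHMO$ yields $\frac{1}{\var}\d_3\eab(\var;\bu(\var))=\d_3\bigl(\frac{1}{\var}\eab(\var;\bu(\var))\bigr)\deb\d_3\eab^1$, and continuity of $\rab:[H^1(\Omega)]^3\to H^{-1}(\Omega)$ yields $\rab(\bu(\var))\deb\rab(\bu)$, both in $\WHMO$. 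Adding them gives $\rab(\bu)=-\d_3\eab^1$ in $\WHMO$. Since $\bu$ depends only on $\by$, $\rab(\bu)\in\WLo\hookrightarrow\WLO$, the $L^2$ regularity being confirmed by testing the distributional identity against tensor-product functions $\phi(\by)\psi(x_3)$. This then retroactively furnishes part (b): combined with the weakly preserved boundary condition $\bu=\bcero$ on $\Gamma_0$ and Theorem \ref{Th_medias}(b), one reads $\bar{u}_i\in\WHo$, while $\rab(\bar{\bu})\in\WLo$ with leading term $\d_{\alpha\beta}\bar{u}_3$ forces $\bar{u}_3\in H^2(\omega)$; the clamping $\d_\nu\bar{u}_3=0$ on $\gamma_0$ then follows from a standard trace analysis of the three-dimensional Dirichlet condition, exactly as in the elastic static case.

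For part (e), the hypothesis $\rab(\bu(\var))\to\kappa_{\alpha\beta}$ in $\WHMO$, combined with Theorem \ref{Th_521}, upgrades the convergence $\frac{1}{\var}\d_3\eab(\var;\bu(\var))\to-\kappa_{\alpha\beta}$ to strong in $\WHMO$; comparing with part (d) gives $\rab(\bu)=\kappa_{\alpha\beta}$. The strong convergence $\bu(\var)\to\bu$ in $\WHOt$ is the delicate step: I would use a two-dimensional Korn-type inequality on $S$ applied to $\bu(\var)-\bu$, combined with Rellich--Kondrachov compactness to upgrade the bending-like weak limit, relying on the strong convergence of $\gab(\bu(\var))$ to $\gab(\bu)=0$ in $\WLo$ (from part (c) and the uniform control on the Christoffel symbols) and of $\rab(\bu(\var))$ to $\rab(\bu)$ in $\WHMO$. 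The main obstacle is precisely this surface Korn inequality, whose validity is tied to the geometry of $S$ and the clamping set $\gamma_0$; it is the reason for introducing the completion spaces of Section \ref{seccion_fuerzas_admisibles} and for the distinction between elliptic and generalized membrane shells.
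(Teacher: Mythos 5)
Your plan — transpose Theorem 5.2-2 of \cite{Ciarlet4b} to the Bochner-space setting $\WHOt$, $\WLO$, $\WHMO$, observing that the spatial operators are linear, bounded and time-independent — is exactly what the paper intends: the paper does not write a proof, only the remark that the result ``follows straightforward'' from the results in \cite{Ciarlet4b}. Parts (a), (c) and (d) are essentially right. In (a) the Christoffel term should carry a single power of $\var$, namely $2\var\,\Gamma^p_{\alpha 3}(\var)u_p(\var)$ rather than $2\var^2$, but since either factor tends to zero the conclusion $\d_3\bu=\bcero$ is unaffected. Part (b) is acceptable in outline.

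The genuine misstep is your closing interpretation of part (e). Theorem \ref{Th_522} is a geometry-free technical preliminary: the paper applies it (including item (e)) to the auxiliary sequence $\bu^{-1}(\var)=\var\bu(\var)$ in steps (vi)--(vii) of the proof of Theorem \ref{Th_convergencia}, and in \cite{Ciarlet4b} the same statement serves every type of shell. The strong convergence in (e) is obtained from an \emph{unconditional} compactness lemma (Lemma 5.2-3 of \cite{Ciarlet4b}): if $\bw(\var)\in V(\Omega)$ satisfies $\bw(\var)\rightharpoonup\bcero$ in $[H^1(\Omega)]^3$, $\d_3 w_i(\var)\to0$ in $L^2(\Omega)$, $\gab(\bw(\var))\to0$ in $L^2(\Omega)$ and $\rab(\bw(\var))\to0$ in $H^{-1}(\Omega)$, then $\bw(\var)\to\bcero$ in $[H^1(\Omega)]^3$. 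You apply this to $\bw(\var)=\bu(\var)-\bu$; all of its hypotheses follow from parts (a)--(d), Rellich--Kondrachov, and the additional hypothesis of (e), and the Bochner setting is handled by integrating over $(0,T)$ and repeating the argument for $\dot{\bu}(\var)-\dot{\bu}$. The only geometric ingredient in this lemma is $\mathrm{length}(\gamma_0)>0$; neither the assumption $V_F(\omega)=\{\bcero\}$ nor the completion spaces of Section~\ref{seccion_fuerzas_admisibles} enter, and indeed cannot, since the theorem is stated before those hypotheses are made. Those spaces are introduced later and for a different reason: over $V(\omega)$ the membrane bilinear form controls only the seminorm $|\cdot|_\omega^M$, not the $H^1\times H^1\times H^2$ norm, which is what makes Problem~\ref{problema_primertipo} a genuinely abstract problem, not what makes the present lemma hard.
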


\begin{remark}
	Theorems \ref{Th_medias}, \ref{Th_521} and \ref{Th_522} are  generalizations of Theorems 4.2-1, 5.2-1 and 5.2-2, \cite{Ciarlet4b}, respectively, and their proofs follow straightforward from the results presented there. 
\end{remark}

Finally, in the next theorem we recall a three-dimensional inequality of Korn's type for a family of viscoelastic shells (see Theorem 5.3-1, \cite{Ciarlet4b}).

\begin{theorem}\label{Th_desigKorn}
	Assume that $\btheta \in \mathcal{C}^3(\bar{\omega};\mathbb{R}^3)$ and we consider $\var_0$ defined as in Theorem \ref{var_0}. We consider a family of viscoelastic membrane shells with thickness $2\varepsilon$ with each having the same  middle surface $S=\btheta(\bar{\omega})$ and with each subjected to a boundary condition of place along a portion of its lateral face having the same set $\btheta(\gamma_0)$ as its middle curve.
	Then there exist a constant $\varepsilon_1$ verifying $0<\varepsilon_1<\varepsilon_0$ and a constant $C>0$ such that, for all $\var$, $0<\varepsilon\leq \varepsilon_1$, the following three-dimensional inequality of Korn's type holds,
	\begin{equation}\label{Korn}
	\left\| \bv\right\| _{1,\Omega}\leq \frac{C}{\var}\left(\sum_{i,j}|\eij(\varepsilon;\bv)|^2_{0,\Omega}\right)^{1/2} \ \forall \bv=(v_i)\in V(\Omega).
	\end{equation}
\end{theorem}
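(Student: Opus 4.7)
The inequality is purely geometric---it does not involve the viscoelastic constitutive tensors---so the plan is to follow the Ciarlet--Lods contradiction argument that establishes its elastic analogue, Theorem 5.3-1 of \cite{Ciarlet4b}, using the preparatory Theorems \ref{Th_521} and \ref{Th_522} already recorded in this section (applied to time-independent fields, to which they extend trivially). Suppose the claim fails uniformly in $\var$. Then one finds sequences $\varepsilon_n\downarrow 0$ and $\bv^n\in V(\Omega)$ with $\|\bv^n\|_{1,\Omega}=1$ and
\[
\frac{1}{\varepsilon_n}\Bigl(\sum_{i,j}|\eij(\varepsilon_n;\bv^n)|^2_{0,\Omega}\Bigr)^{1/2} \longrightarrow 0,
\]
so that $\varepsilon_n^{-1}\eij(\varepsilon_n;\bv^n)\to 0$ in $L^2(\Omega)$ for every pair $(i,j)$.

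By Rellich--Kondrachov I extract a subsequence with $\bv^n\rightharpoonup \bv$ in $[H^1(\Omega)]^3$ and $\bv^n\to \bv$ in $[L^2(\Omega)]^3$; passing the clamping condition to the weak limit gives $\bv\in V(\Omega)$. Inspecting (\ref{eatres})--(\ref{edtres}) and using the uniform bounds on the scaled Christoffel symbols from Theorem \ref{Th_simbolos2D_3D}, the convergences $\eatres(\varepsilon_n;\bv^n)\to 0$ and $\edtres(\varepsilon_n;\bv^n)\to 0$ in $L^2(\Omega)$ force $\d_3 v_i^n\to 0$ strongly in $L^2(\Omega)$; consequently $\d_3\bv=\bcero$ and $\bv$ is identified with a field $\beeta=(\eta_i)$ defined on $\omega$. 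Applying Theorem \ref{Th_522} with $\eij^1\equiv 0$ as the weak $L^2$-limit of $\varepsilon_n^{-1}\eij(\varepsilon_n;\bv^n)$, parts (b)--(d) yield $\beeta\in H^1(\omega)\times H^1(\omega)\times H^2(\omega)$ with $\eta_i=\d_\nu \eta_3=0$ on $\gamma_0$, $\gab(\beeta)=0$, and $\rab(\beeta)=-\d_3\eab^1=0$. The infinitesimal rigid-displacement lemma on the clamped middle surface (Theorem 2.6-3 of \cite{Ciarlet4b}, whose hypothesis $\mathrm{meas}(\gamma_0)>0$ is in force) then forces $\beeta=\bcero$.

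To close the contradiction with $\|\bv^n\|_{1,\Omega}=1$, I upgrade $\bv^n\rightharpoonup \bcero$ to strong $H^1$-convergence via Theorem \ref{Th_522}(e). Its hypothesis $\rab(\bv^n)\to 0$ in $H^{-1}(\Omega)$ is read off from Theorem \ref{Th_521}: the first term on its right-hand side is controlled by $\sum_i|\eitres(\varepsilon_n;\bv^n)|_{0,\Omega}\to 0$, the two remaining terms carry an explicit prefactor $\varepsilon_n$, and $\varepsilon_n^{-1}\d_3\eab(\varepsilon_n;\bv^n)\to 0$ in $H^{-1}(\Omega)$ follows at once from $\varepsilon_n^{-1}\eab(\varepsilon_n;\bv^n)\to 0$ in $L^2(\Omega)$. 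Therefore $\bv^n\to \bcero$ strongly in $[H^1(\Omega)]^3$, contradicting the normalization $\|\bv^n\|_{1,\Omega}=1$.

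The principal obstacle in this route is precisely the upgrade from weak to strong $H^1$-convergence; this is why the delicate estimates of Theorems \ref{Th_521} and \ref{Th_522} have been set up in advance. Once those are at hand, every other step reduces to passing to the limit in the scaled strain identities (\ref{eab})--(\ref{edtres}) using the $\mathcal{C}^0(\bar{\Omega})$-convergence of the geometric quantities collected in Theorem \ref{Th_simbolos2D_3D}.
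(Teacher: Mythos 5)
The paper does not actually prove this theorem: it only \emph{recalls} it, deferring to Theorem 5.3-1 of \cite{Ciarlet4b}, so there is no in-text argument to compare against. Your blind proof is correct and is a faithful reconstruction of the contradiction argument given in that reference: normalize and extract weak limits, apply Theorem \ref{Th_522} with $\eij^1\equiv 0$ together with the surface rigid-displacement lemma to force the weak limit $\bv$ to vanish, and use Theorem \ref{Th_521} to show $\rab(\bv^n)\to 0$ in $H^{-1}(\Omega)$ so that Theorem \ref{Th_522}(e) upgrades to strong $H^1$-convergence, contradicting the normalization. (A minor redundancy: the explicit derivation that $\d_3 v_i^n\to 0$ strongly and hence $\d_3\bv=\bcero$ already follows from part (a) of Theorem \ref{Th_522}, which you invoke in the very next step.)
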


\section{Completion spaces and Admissible forces}\setcounter{equation}{0} \label{seccion_fuerzas_admisibles}


In this section we shall introduce {\it ad hoc} spaces which complete the ones introduced in \cite{intro2}, where the obtention of the two-dimensional equations of the viscoelastic membrane shell problem was presented.  Moreover, we also shall introduce some assumptions needed on the applied forces. Recall that,
\begin{align*}
V_F(\omega):=& \{ \beeta=(\eta_i) \in H^1(\omega)\times H^1(\omega)\times H^2(\omega);
\\ &\quad  \eta_i=\d_\nu \eta_3=0 \ \textrm{on} \ \gamma_0, \gab(\beeta)=0 \en \omega \}.
\end{align*}  
In \cite{eliptico}, we justified the two-dimensional equations of  the  viscoelastic membrane shells, where the middle surface $S$ is elliptic and the boundary condition of place is considered on the whole lateral face of the shell. These assumptions lead to $V_F(\omega)=\{\bcero\}$ (see \cite{eliptico} for details). In this paper, we shall considered the remaining cases where some of those assumptions are not verified but still $V_F(\omega)=\{\bcero\}$. Those cases are known as the generalized membrane shells. Let us define the spaces :
\begin{align*}
V(\omega)&:=\{\beeta=(\eta_i)\in[H^1(\omega)]^3 ; \eta_i=0 \ \textrm{on} \ \gamma_0 \}, \\ 
V_0(\omega)&:=\{\beeta=(\eta_i)\in V(\omega) ; \gab(\beeta)=0  \ \textrm{in} \ \omega \},\\
V_K(\omega)&:=\{ \beeta=(\eta_i)\in H^1(\omega)\times H^1(\omega)\times H^2(\omega); \eta_i=\d_\nu\eta_3=0 \ \textrm{on} \ \gamma_0 \},
\end{align*}  
and also, we introduce  the seminorms defined by
\begin{align*}
|\bv|^M_\Omega&:= \left(\left| \d_3\bv\right| ^2_{0,\Omega} + \left(\left| \bar{\bv}\right| _\omega^M\right)^2\right) ^{1/2} \ \forall \bv\in V(\Omega), \\
\left|\beeta\right|_\omega^M&:= \left(\sum_{\alpha,\beta} \left| \gab(\eta)\right|_{0,\omega}^2 \right)^{1/2}  \forall \beeta=(\eta_i)\in H^1(\omega)\times H^1(\omega)\times L^2(\omega).
\end{align*}
Since $V_F(\omega)=\{\bcero\}$ by assumption, the seminorm  $\left|\cdot\right|_\omega^M$ 
is a norm over the space $V_K(\omega).$

Now, we shall distinguish two different subsets of generalized membrane shells, depending on whether or not the space $V_0(\omega)$ contains only the zero function. One of the difficulties faced is the introduction of abstract spaces, which do not have any physical meaning.
We consider a generalized membrane shell of the first kind when $V_0(\omega)=\{\bcero\}$ (hence, $V_F(\omega)=\{\bcero\}$), this is, when the seminorm $\left|\cdot\right| _{\omega}^M$ is a norm over the space $V(\omega)$ (hence, will be a norm over $V_K(\omega)\subset V(\omega)$). Therefore, the abstract spaces are defined by
\begin{align} \label{espacio_Omega_1tipo}
V_M^{\#}(\Omega)&:= \textrm{completion of} \ V(\Omega) \ \textrm{with respect} \ |.|^M_\Omega,\\ \label{espacio_omega1tipo}
V_M^{\#}(\omega)&:= \textrm{completion of} \ V(\omega) \ \textrm{with respect} \ |.|^M_\omega.
\end{align}

Otherwise, if $V_0(\omega)\neq\{\bcero\}$ but still $V_F(\omega)=\{\bcero\}$ , this is, if $\left|\cdot\right| _\omega^M$ is a norm over $V_K(\omega)$ but not over $V(\omega)$, the shell is a generalized membrane of the second kind.  Therefore, the abstract spaces are defined by
\begin{align}
\tilde{V}_M^{\#}(\Omega)&:= \textrm{completion of} \ V(\Omega)/V_0(\Omega) \ \textrm{with respect to} \ |\cdot|^M_\Omega,
\\
\tilde{V}_M^{\#}(\omega)&:= \textrm{completion of} \ V(\omega)/V_0(\omega) \ \textrm{with respect to} \ |\cdot|^M_\omega.
\end{align}

\begin{remark}
	Notice that, in both cases, these ``abstract'' spaces might not be spaces of distributions. 
	
	We can find a large variety of practical examples in the case of generalized membranes of the first kind (see \cite{Ciarlet4b}). However, we do not have examples for those of the second kind. As commented in \cite{Ciarlet4b}, they should correspond to shells with surfaces $S$ with ``few" regularity.  
\end{remark}

Now, we shall present some additional assumptions needed for the applied forces. Let us define for each $\var>0$, the real function  $L(\varepsilon)(t): V(\Omega)\longrightarrow \mathbb{R}$ given by
\begin{equation}
L(\varepsilon)(t)(\bv):= \int_{\Omega} f^{i}(t) v_i \sqrt{g(\varepsilon)} dx + \int_{\Gamma_+\cup\Gamma_-} h^{i}(t) v_i\sqrt{g(\varepsilon)}  d\Gamma,  
\end{equation} 
$\forall \bv\in V(\Omega), \forallt,$ with $f^{i}\in L^2(0,T;L^2(\Omega))$ and $h^{i}\in L^2(0,T;L^2(\Gamma_+\cup\Gamma_-))$. It is easy to check that this function is continuous with respect to the norm $||.||_{1,\Omega}$ and uniform with respect  $0<\varepsilon\leq\varepsilon_0$, with $\var_0$ defined in Theorem \ref{var_0}. By the inequality of Korn's type in Theorem \ref{Th_desigKorn}, there exists a constant $K$ such that
\begin{equation}
\nonumber
|L(\varepsilon)(t)(\bv)|\leq \frac{K(t)}{\varepsilon} \left(\sum_{i,j}| \eij(\varepsilon;\bv)|_{0,\Omega}^2   \right) ^{1/2} \ \forall \bv\in V(\Omega), \forallt.
\end{equation}
Therefore,  $L(\varepsilon)(t)$ is also continuous with respect to the norm defined by 
\begin{align}\label{norma_e}
\bv\longmapsto \left(\sum_{i,j}| \eij(\varepsilon;\bv)|_{0,\Omega}^2\right)^{1/2},
\end{align}  
but not uniform whit respect to  $\varepsilon$ unless additional hypothesis for the applied forces is made. Notice that    $V(\Omega)$ is a Hilbert space with respect to the interior product,
\begin{align}
\left(\bv,\bw\right):= \intO \eij(\var;\bv) \eij(\var;\bw) \sqrt{g(\var)} dx, \quad \forall\bv,\bw\in V(\Omega),
\end{align}
since it is easy to verify that the norm  (\ref{norma_e}) satisfies the parallelogram's equality. Then, applying the Riez's Representation  Theorem, there exists a  $\bG(t)\in [H^1(\Omega)]^3$ for each $t\in[0,T]$, such that
\begin{equation} 
L(\varepsilon)(t)(\bv)=\int_{\Omega} 
\eij(\var; \bG(t))\eij(\varepsilon;\bv)\sqrt{g(\varepsilon)}dx \ \forall \bv\in V(\Omega).
\end{equation} 
Therefore, let us define $F^{ij}(\var)(t):=\eij(\var; \bG(t))$ for each $t\in[0,T]$, so
\begin{equation} \label{fuerzasadmisibles_def}
L(\varepsilon)(t)(\bv)=\int_{\Omega} F^{ij}(\varepsilon)(t)\eij(\varepsilon;\bv)\sqrt{g(\varepsilon)}dx \ \forall \bv\in V(\Omega).
\end{equation} 
If $|F^{ij}(\varepsilon)|_{0,\Omega}$ is uniformly bounded with respect to  $\varepsilon$, we ensure the uniform continuity of the linear form. Moreover, we need $F^{ij}(\varepsilon)$ to have a limit in  $\LLO$,  when $\varepsilon\rightarrow 0$. Following the considerations above, the applied forces over a family of generalized membranes will be known as admissible forces if, for each $\varepsilon>0$, there exist functions  $F^{ij}(\varepsilon)=F^{ji}(\varepsilon)\in L^{2}(0,T;L^2(\Omega))$ and  $F^{ij}=F^{ji}\in L^{2}(0,T;L^2(\Omega)) $ such that the equality (\ref{fuerzasadmisibles_def}) holds for all $\varepsilon$, $ 0<\varepsilon\leq\varepsilon_0$ and $F^{ij}(\var)\rightarrow F^{ij}$ in $\LLO$  when $\varepsilon\rightarrow 0$. Therefore, if the applied forces are admissible, there exists a constant $K_0(t)>0$ such that,
\begin{equation}\label{fuerzasadmisibles}
\left|L(\varepsilon)(t)(\bv)\right|\leq {K_0(t)} \left(\sum_{i,j}| \eij(\varepsilon;\bv)|_{0,\Omega}^2 \right)   ^{1/2}, 
\end{equation}

We need to assume additional hypotheses for the contravariant components $f^{i,\varepsilon}\in L^{2}(0,T;L^2(\Omega))$, $h^{i,\var}\in L^{2}(0,T;L^2(\Gamma_+\cup\Gamma_-))$ so that, the right-hand side of the equation (\ref{Pbvariacionaleps}) can be written for each  $\varepsilon>0$ and for all $t\in[0,T]$ as follows:
\begin{equation}
\int_{\Omega^\varepsilon} f^{i,\varepsilon}(t) v_i^\varepsilon \sqrt{g^\varepsilon} dx^\varepsilon + \int_{\Gamma_+^\varepsilon\cup\Gamma_-^\varepsilon} h^{i,\varepsilon}(t) v_i^\varepsilon\sqrt{g^\varepsilon}  d\Gamma^\varepsilon= \var L(\varepsilon)(\bv)(t),
\end{equation} 
\begin{remark}
	Notice that, by considering this expression we are making an assumption on the orders of the applied forces. Actually, these orders are those corresponding to the viscoelastic membrane shell equations derived in \cite{intro2}, that is, taking $p=0$ in the Problem \ref{problema_orden_fuerzas}.
\end{remark}
Then, we can write the equations in the reference domain by taking into account the definition of the admissible forces.
\begin{problem}\label{problema_escalado_fuerzas}
	Find $\bu(\varepsilon):(0,T)\times \Omega\longrightarrow \mathbb{R}^3$ such that  
	\begin{align}\nonumber 
	&\bu(\varepsilon)(t,\cdot)\in V(\Omega) \forallt, \\ \nonumber
	&\int_{\Omega}A^{ijkl}(\varepsilon)e_{k||l}(\varepsilon)e_{i||j}(\varepsilon;\bv)\sqrt{g(\varepsilon)} dx
	+ \int_{\Omega} B^{ijkl}(\varepsilon)\dot{e}_{k||l}(\varepsilon)e_{i||j}(\varepsilon;\bv) \sqrt{g(\varepsilon)}  dx
	\\\label{ecuacion_fadmisibles}
	& \quad=  L(\varepsilon)(v)  \quad \forall \bv\in V(\Omega), \aes,
	\\\displaystyle \nonumber
	& \bu(\var)(0,\cdot)= \bu_0(\var)(\cdot).
	\end{align} 
\end{problem}

The Problem \ref{problema_escalado_fuerzas} is a particular case of the Problem \ref{problema_orden_fuerzas}, hence we can ensure the existence, uniqueness and regularity of solution for $\var$ sufficiently small, taking into account the admissible forces defined above.

\section{Asymptotic Analysis. Convergence results as $\var\to0$}\setcounter{equation}{0} \label{seccion_convergencia}

To begin with, we  recall the two-dimensional membrane shell problem obtained in \cite{intro2} taking into account the admissible forces and the abstract spaces defined in the previous section.  Let us remind the definition of the two-dimensional fourth-order tensors that appeared naturally in that study,
\begin{align} \label{tensor_a_bidimensional}
\a&:=\frac{2\lambda\rho^2 + 4\mu\theta^2}{(\theta + \rho)^2}a^{\alpha\beta}a^{\sigma\tau} + 2\mu\ten, 
\\ \label{tensor_b_bidimensional}
\b&:=\frac{2\theta\rho}{\theta + \rho}a^{\alpha\beta}a^{\sigma\tau} + \rho\ten,
\\ \label{tensor_c_bidimensional}
\c&:=\frac{2 \left(\theta \Lambda \right)^2}{\theta + \rho} a^{\alpha\beta}a^{\sigma\tau},  
\end{align}
where
\begin{align}\label{Lambda}
\Lambda:=\left(  \frac{\lambda}{\theta} - \frac{\lambda+ 2 \mu}{\theta + \rho} \right). 
\end{align}

For the sake of briefness, we only consider viscoelastic generalized membrane shells of the first kind, as those of the second kind are treated in a similar fashion. We  formulate the scaled two-dimensional variational problem of a viscoelastic generalized membrane shell of the first kind as follows:
\begin{problem}\label{problema_primertipo}
	Find $\bxi(t,\cdot)\in V_M^{\#}(\omega) \forallt $ such that,
	\begin{align}
	&B_{M}^{\#}(\bxi(t),\beeta)=L_{M}^{\#}(\beeta)(t) \ \forall \beeta\in V_M^{\#}(\omega), \ae,
	\\ \nonumber
	&\bxi(0,\cdot)=\bxi_0(\cdot),
	\end{align}
	where $B_{M}^{\#}$ and $L_{M}^{\#}$ are the unique continuous extensions from $H^{1}(0,T;V(\omega))$ to $H^{1}(0,T; V_M^{\#}(\omega))$ and from $V(\omega)$ to $V_M^{\#}(\omega)$ of the functions $B_{M}:H^{1}(0,T; V(\omega)) \times V(\omega) \longrightarrow \mathbb{R}$ and $L_{M}(t): V(\omega)\longrightarrow \mathbb{R}$, respectively, defined by
	\begin{align} \nonumber
	B_{M}(\bxi(t),\beeta):=&\int_{\omega}\a\gst(\bxi(t))\gab(\beeta)\sqrt{a}dy + \int_{\omega}\b\gst(\dot{\bxi}(t))\gab(\beeta)\sqrt{a}dy 
	\\
	\qquad & - \int_0^te^{-k(t-s)}\into \c \gst(\bxi(s))\gab(\beeta)\sqrt{a}dyds,
	\\
	L_{M}(\beeta)(t):=& \int_{\omega} \varphi^{\alpha\beta}(t)\gab(\beeta)\sqrt{a}dy , 
	\end{align}                    
	where we introduced the constant $k$ defined by
	\begin{align}\label{k}
	k:=\frac{\lambda+ 2 \mu}{\theta + \rho},
	\end{align}
	and where $\varphi^{\alpha\beta}$ is an auxiliary function, related with the admissible forces, that will appear naturally in this study,  given by
	\begin{align}\label{phi_1}
	\varphi^{\alpha\beta}(t):=\int_{-1}^{1}\left( F^{\alpha\beta}(t) - \frac{\theta}{\theta + \rho} F^{33}(t)a^{\alpha\beta} + \frac{\theta\Lambda}{\theta + \rho}\int_0^t e^{-k(t-s)}F^{33}(s)dsa^{\alpha\beta}\right) dx_3,
	\end{align}
	$\Forallt$.
\end{problem}

The Problem \ref{problema_primertipo} is well posed and it has a unique solution. The proof given in the next theorem makes use of similar arguments which can be found in the proof of the existence and uniqueness of solution of the de-scaled problem of the viscoelastic membrane shell (see Theorem 6.4,  \cite{intro2}).

\begin{theorem} \label{Th_exist_unic_bid_cero}
	Let $\omega$  be a domain in $\mathbb{R}^2$, let $\btheta\in\mathcal{C}^2(\bar{\omega};\mathbb{R}^3)$ be an injective mapping such that the two vectors $\ba_\alpha=\d_\alpha\btheta$ are linearly independent at all points of $\bar{\omega}$. Let $\varphi^{\alpha\beta}\in L^{2}(0,T; L^2(\omega)) $ and  $\bxi_0\in V_M^{\#}(\omega). $  Then the Problem \ref{problema_primertipo}, has a unique solution  $\bxi\in H^{1}(0,T;V_M^{\#}(\omega))$.  In addition to that, if $\dot{\varphi}^{\alpha\beta}\in L^{2}(0,T; L^2(\omega)) $, then $\bxi\in H^{2}(0,T;V_M^{\#}(\omega))$. 
\end{theorem}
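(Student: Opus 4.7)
The plan is to adapt the argument of Theorem 6.4 in \cite{intro} for the (non-abstract) viscoelastic membrane shell problem, the additional feature being that elements of $V_M^{\#}(\omega)$ are not, a priori, distributions. First, observe that each of the three bilinear maps $(\bxi,\beeta)\mapsto \int_\omega \a \gst(\bxi)\gab(\beeta)\sqrt{a}dy$, $(\bxi,\beeta)\mapsto \int_\omega \b \gst(\bxi)\gab(\beeta)\sqrt{a}dy$ and $(\bxi,\beeta)\mapsto \int_\omega \c \gst(\bxi)\gab(\beeta)\sqrt{a}dy$ defined on $V(\omega)\times V(\omega)$ is continuous with respect to $|\cdot|_\omega^M$ in each argument; this is immediate from Cauchy--Schwarz and the uniform boundedness of $\a,\b,\c$ on $\bar\omega$. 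By density, each extends uniquely and continuously to $V_M^{\#}(\omega)\times V_M^{\#}(\omega)$, furnishing $B_M^{\#}$. Likewise, under the hypothesis $\varphi^{\alpha\beta}\in L^2(0,T;L^2(\omega))$, the linear form $L_M(t)$ is continuous in $|\cdot|_\omega^M$ for a.e.\ $t$, so it extends to $L_M^{\#}(t)\in (V_M^{\#}(\omega))^*$ with $t\mapsto L_M^{\#}(t)\in L^2(0,T;(V_M^{\#}(\omega))^*)$.

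Next, I would use the uniform positive definiteness of the 2D viscosity tensor $(\b)$, inherited from the 3D ellipticity of $(B^{ijkl}(0))$ established in Theorem \ref{Th_comportamiento asintotico}, to obtain the coercivity estimate
$$\int_\omega \b \gst(\beeta)\gab(\beeta)\sqrt{a}\,dy \;\ge\; c\,(|\beeta|_\omega^M)^2 \quad \forall \beeta\in V(\omega),$$
extended by density to $V_M^{\#}(\omega)$. Consequently, the bilinear form associated to $\b$ is an inner product on $V_M^{\#}(\omega)$ equivalent to the completion inner product, and its Riesz operator $\mathcal{B}:V_M^{\#}(\omega)\to (V_M^{\#}(\omega))^*$ is an isomorphism. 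Writing $\mathcal{A},\mathcal{C}$ for the bounded operators similarly associated to $\a,\c$, the extended equation is equivalent to the abstract integro-differential Cauchy problem
$$\mathcal{B}\dot{\bxi}(t) + \mathcal{A}\bxi(t) - \int_0^t e^{-k(t-s)}\mathcal{C}\bxi(s)\,ds \;=\; L_M^{\#}(t),\qquad \bxi(0)=\bxi_0.$$
Integrating in time and applying $\mathcal{B}^{-1}$ reduces this to a Volterra-type fixed-point equation $\bxi(t)=\bxi_0+\int_0^t \mathcal{F}(s,\bxi)\,ds$, whose right-hand side is globally Lipschitz in $\bxi\in C([0,T];V_M^{\#}(\omega))$. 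A Banach fixed-point argument (equivalently, Picard iteration combined with Gronwall's inequality to absorb the memory kernel on the whole interval $[0,T]$) then yields a unique solution $\bxi\in C([0,T];V_M^{\#}(\omega))$; substituting back into the equation shows $\dot\bxi\in L^2(0,T;V_M^{\#}(\omega))$, whence $\bxi\in H^1(0,T;V_M^{\#}(\omega))$.

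For the regularity upgrade, if in addition $\dot{\varphi}^{\alpha\beta}\in L^2(0,T;L^2(\omega))$, then $\dot{L}_M^{\#}\in L^2(0,T;(V_M^{\#}(\omega))^*)$, and formally differentiating the Volterra equation in time yields an equation of the same structure for $\dot\bxi$, with a right-hand side depending linearly on $\dot L_M^{\#}$, on $\bxi$ itself (through the memory term) and on the initial value $\bxi_0$. Re-running the same fixed-point argument gives $\dot\bxi\in H^1(0,T;V_M^{\#}(\omega))$, i.e.\ $\bxi\in H^2(0,T;V_M^{\#}(\omega))$. The main obstacle throughout is precisely the abstract character of $V_M^{\#}(\omega)$: no distributional or trace interpretation of its elements is available, so every estimate must be performed through the Hilbert structure of the completion and the continuous extensions of the bilinear forms. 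The coercivity of the viscous form is the single critical ingredient that reduces the whole problem to standard abstract Hilbert-space ODE theory.
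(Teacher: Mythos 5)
Your proposal is correct and consistent with the approach the paper itself points to: the paper supplies no proof body for this theorem, instead deferring to Theorem 6.4 of \cite{intro} (``the proof given in the next theorem makes use of similar arguments which can be found in\ldots Theorem 6.4, \cite{intro}''), and your reduction to an abstract Volterra integro-differential equation in the Hilbert space $V_M^{\#}(\omega)$ — relying on the coercivity of the viscous form $\b$ (which requires $\rho>0$, as the paper implicitly assumes), density of $V(\omega)$ for the continuous extensions, and Picard iteration with Gronwall — is precisely the adaptation that reference calls for. The regularity upgrade by differentiating the equation and re-running the fixed-point argument is likewise the expected route.
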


For each $\var>0$, we assume that the initial condition for the scaled linear strains is
\begin{equation} \label{condicion_inicial_def}
\eij(\var)(0,\cdot)=0,
\end{equation}
this is, that the domain is on its natural state with no strains on it at the beginning of the period of observation.

Now, we present here the main result of this paper, that the scaled three-dimensional unknown, $\bu(\var)$, converges as $\var$ tends to zero towards a limit $\bu$. Moreover, its transversal  average, $\overline{\bu(\var)}$, converges as $\var$ tends to zero to the solution $\bxi=\bar{\bu}$ of the two-dimensional Problem \ref{problema_primertipo}, posed over the set $\omega$. Given $\bv\in L^2(0,T;[L^2(\Omega)]^3)$ and $\beeta\in L^2(0,T;[L^2(\omega)]^3)$, we shall use the notation
\begin{align*}
\left|\bv\right| _{T,\Omega}^M:= \left(\int_0^T \left( \left| \bv(t)\right| _\Omega^M \right)^2dt\right)^{1/2}, \quad \left|\beeta\right| _{T,\omega}^M:= \left(\int_0^T \left( \left| \beeta(t)\right| _\omega^M \right)^2dt\right)^{1/2}.
\end{align*}

\begin{theorem} \label{Th_convergencia}
	Let us suppose that
	$\btheta\in\mathcal{C}^3(\bar{\omega};\mathbb{R}^3)$ and let $\var_0$ be defined as in Theorem \ref{var_0}. Consider a family of generalized membrane shells of the first kind with thickness  $2\varepsilon$,  having each one the same middle surface $S=\btheta(\bar{\omega})$, under a boundary condition of place along a portion of  its lateral face, with the same set $\btheta(\gamma_0)$  as the middle curve and subjected to admissible forces. Let $\bu(\varepsilon)$ be for every $\var$, $0<\varepsilon\leq\varepsilon_0$ the solution of the three-dimensional problem under admissible forces in Problem \ref{problema_escalado_fuerzas} . Then there exists $\bu\in \WVMO$ and $\bxi\in \WVMo$ such that $\bu(\varepsilon)\rightarrow \bu$ in $\WVMO$ when $\varepsilon\rightarrow 0$. Moreover,
	\begin{align*}
	\overline{\bu(\varepsilon)}:= \frac1{2}\int_{-1}^{1}\bu(\varepsilon)dx_3\rightarrow \bxi \ \textrm{in} \ \WVMo \ \textrm{when} \ \varepsilon \rightarrow 0.
	\end{align*}
	Furthermore, the limit $\bxi$ satisfies the Problem \ref{problema_primertipo}.
\end{theorem}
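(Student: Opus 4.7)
The plan is to mimic the asymptotic scheme developed in Chapter 5 of \cite{Ciarlet4b} for elastic generalized membranes, incorporating the viscoelastic structure by solving ordinary differential equations in time for the transverse strain components, as was already encountered in \cite{intro,eliptico}. First I would test the variational equation in Problem \ref{problema_escalado_fuerzas} with $\bv=\dot{\bu}(\varepsilon)$, integrate in time over $[0,t]$, and use the uniform coercivity of the viscosity tensor (\ref{elipticidadB_eps}) together with the initial condition (\ref{condicion_inicial_def}). Since the forces are admissible, the right-hand side is bounded by $\bigl(\sum_{i,j}|\eij(\varepsilon;\dot{\bu}(\varepsilon))|^2_{0,\Omega}\bigr)^{1/2}$ uniformly in $\varepsilon$, and a Young plus Gr\"onwall argument yields uniform bounds of the type
\begin{equation*}
|\bu(\varepsilon)|^M_{T,\Omega}+|\dot{\bu}(\varepsilon)|^M_{T,\Omega}+\sum_{i,j}\bigl\|\eij(\varepsilon)\bigr\|_{\WLO}+\sum_{i,j}\bigl\|\deij(\varepsilon)\bigr\|_{\WLO}\le C .
\end{equation*}

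From these bounds, I would extract subsequences (not relabeled) such that $\bu(\varepsilon)\deb\bu$ in $\WVMO$, $\eij(\varepsilon)\deb \eij^{(0)}$ in $\WLO$, and similarly for their time derivatives. Applying the identification arguments of Theorem \ref{Th_522} (extended to the time-dependent setting via Theorem \ref{Th_medias}), I would show that the limit $\bu$ is independent of $x_3$, that its transverse average $\bxi:=\bar{\bu}$ lies in $V_M^{\#}(\omega)$, that $\d_3\bu(\varepsilon)\to \bcero$ strongly in $\WLO$, and that the in-plane limit strains satisfy $\eab^{(0)}=\gab(\bxi)$.

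The viscoelastic core of the argument is the characterization of $\estres^{(0)}$ and $\edtres^{(0)}$. Taking test functions of the form $\bv=(0,0,v_3)$ and $\bv=(v_\alpha,0)$ with $v_3$, $v_\alpha$ supported in the interior of $\Omega$, and exploiting the vanishing blocks $A^{\alpha\beta\sigma 3}(\varepsilon)=B^{\alpha\beta\sigma 3}(\varepsilon)=A^{\alpha 3 3 3}(\varepsilon)=B^{\alpha 3 3 3}(\varepsilon)=0$ of Theorem \ref{Th_comportamiento asintotico}, the leading-order equations produce in the limit the pointwise (in $\by$) relations
\begin{equation*}
\rho\,\dedtres^{(0)}+(\lambda+2\mu)\,\edtres^{(0)}+\theta a^{\sigma\tau}\gst(\dot{\bxi})+\lambda a^{\sigma\tau}\gst(\bxi)=F^{33},
\end{equation*}
together with the analogous relation forcing $\estres^{(0)}=0$. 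Solving this linear ODE in $t$ with zero initial datum yields the exponential kernel $e^{-k(t-s)}$ with $k$ given by (\ref{k}), and expresses $\edtres^{(0)}$ as a long-term memory convolution of $\gst(\bxi)$ and of $F^{33}$. Substituting back into the leading-order variational identity, collecting coefficients in $\lambda,\mu,\theta,\rho$, and using the tensor definitions (\ref{tensor_a_bidimensional})--(\ref{tensor_c_bidimensional}) and (\ref{Lambda}), the left-hand side reorganizes as $B_M(\bxi,\beeta)$ while the right-hand side becomes $L_M(\beeta)$ with $\varphi^{\alpha\beta}$ exactly as in (\ref{phi_1}).

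To finish, I would upgrade weak convergence to strong: using $\bv=\bu(\varepsilon)-\bw(\varepsilon)$ for a suitable recovery sequence $\bw(\varepsilon)$ built from $\bxi$ and the identified transverse strains, the coercivity (\ref{elipticidadB_eps}) on the time-derivative bilinear form gives $\sum_{i,j}\|\eij(\varepsilon;\bu(\varepsilon))-\eij^{(0)}\|_{\WLO}\to 0$, hence $|\bu(\varepsilon)-\bu|^M_{T,\Omega}\to 0$, and the uniqueness part of Theorem \ref{Th_exist_unic_bid_cero} identifies $\bxi=\bar{\bu}$, ensuring convergence of the whole family. The main obstacle I anticipate is twofold: first, the spaces $V_M^{\#}(\omega)$ and $V_M^{\#}(\Omega)$ need not be spaces of distributions, so the variational identity must be passed to the limit on $V(\omega)$ and then extended by density and continuity of $B_M^{\#}$, $L_M^{\#}$; second, solving the ODE for $\edtres^{(0)}$ requires sufficient temporal regularity of the right-hand side, which is why the admissibility assumption on $F^{ij}(\varepsilon)$ and the additional time-regularity hypothesis of Theorem \ref{Th_exist_unic_bid_cero} are essential.
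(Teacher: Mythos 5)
Your overall scheme (a priori bounds via the viscosity coercivity, weak limits, identification of $\eatres,\edtres$ by ODEs in time, then an energy argument to upgrade to strong convergence) follows the same strategy as the paper, but two steps as sketched would fail.

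First, you assert that the leading-order equations force $\estres^{(0)}=0$; this is wrong, and it is not a harmless simplification. Testing with $\bv$ independent of $x_3$ gives, via Theorem~\ref{th_int_nula}, the ODE
\begin{equation*}
2\mu\,\eatres+\rho\,\deatres=a_{\alpha\sigma}F^{\sigma 3},
\end{equation*}
whose solution with zero initial datum is $\eatres(t)=\tfrac1{\rho}a_{\alpha\sigma}\int_0^t e^{-\tfrac{2\mu}{\rho}(t-s)}F^{\sigma 3}(s)\,ds$, which is nonzero whenever $F^{\sigma 3}\neq 0$ (admissible forces make no assumption that $F^{\alpha 3}$ vanishes). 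This is essential: in the limit variational identity the combination $4\mu a^{\alpha\sigma}\estres+2\rho a^{\alpha\sigma}\destres$ equals exactly $2F^{\alpha 3}$, which cancels the $2F^{\alpha 3}(\tfrac12\d_\alpha v_3+b_\alpha^\sigma v_\sigma)$ term coming from $\lim L(\var)(\bv)$ and leaves only the $F^{\alpha\beta}$ and $F^{33}$ contributions that enter $\varphi^{\alpha\beta}$. If you set $\estres^{(0)}=0$, that term does not cancel and you do not recover $L_M$ with the kernel (\ref{phi_1}). (As a side remark, the coefficient of $\dedtres$ in your transverse ODE should be $\theta+\rho$, not $\rho$, consistent with $B^{3333}(0)=\theta+\rho$ and with (\ref{k}).)

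Second, the passage from $\sum_{i,j}\|\eij(\var)-\eij\|_{\WLO}\to 0$ to $|\bu(\var)-\bu|^M_{T,\Omega}\to 0$ is not immediate, because the seminorm $|\cdot|^M_\Omega$ contains $|\d_3\bv|_{0,\Omega}$ and the in-plane strains $\eab(\var)$ control $\d_3 u_\alpha(\var)$ only through $\var\eatres(\var)$. Establishing $\d_3u_\alpha(\var)\to 0$ strongly in $\WLO$ requires, as in the paper, a separate compensated-compactness-type lemma on the distributions $e_{ij}(\bv)$ (step (ix) there), combined with the uniform estimates $\|\d_3\Gamma^p_{\alpha\beta}(\var)\|_{0,\infty,\bar\Omega}=\mathcal O(\var)$ from Theorem~\ref{Th_simbolos2D_3D} and the convergences $\var\bu(\var)\to\bcero$ in $H^1(0,T;[L^2(\Omega)]^3)$. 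Your proposal omits this entirely. A related inaccuracy is the claim that ``the limit $\bu$ is independent of $x_3$'': the paper explicitly cautions (see the abstract and conclusions) that, unlike the elliptic membrane case, the limit need not be $x_3$-independent; what is established is $\d_3\bu(\var)\to\bcero$ in $\WLO$ and the $x_3$-independence of the limit strains $\eab$ and of $\bu^{-1}=\lim\var\bu(\var)$ (which is in fact $\bcero$). Theorem~\ref{Th_522} is applied to $\var\bu(\var)\in H^1(0,T;[H^1(\Omega)]^3)$, not to $\bu(\var)$ itself, which only converges in the abstract completion $\WVMO$.
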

\begin{proof} 
	We follow the same structure of the proof of the Theorem 5.6-1, \cite{Ciarlet4b}.Hence, we shall reference to some steps which apply in the same manner. The proof is divided into several parts, numbered from $(i)$ to $(xi)$.
	\begin{enumerate}[label={{(\roman*)}}, leftmargin=0em ,itemindent=3em]
		\item{\em There exists  $\var_2,$ $0<\varepsilon_2\leq \varepsilon_0$ and a constant $c_0>0$ such that, for all $0<\varepsilon\leq \varepsilon_2$,
			\begin{equation}
			\left|\bv\right| _\Omega^M\leq c_0 \left(\sum_{i,j}\left|\eij(\varepsilon;\bv)\right| _{0,\Omega}^{2}\right) ^{1/2} \ \forall \bv\in V(\Omega).
			\end{equation}}
		The proof can be found in step $(i)$ in Theorem 5.6-1, \cite{Ciarlet4b}, so we omit it.
		
		\item {\em \textit{A priori} boundedness and extractions of weakly convergent sequences. The seminorms  $|\bu(\varepsilon)|_{T,\Omega}^M$ and $|\overline{\bu(\varepsilon)}|_{T,\omega}^M$, the seminorms of the respective time derivatives   and the norms $\left\| \varepsilon \bu(\varepsilon)\right\|_{\WHMOt}$ and $|\eij(\varepsilon)|_{\WLO}$ are bounded independently of $\var$, $0<\varepsilon\leq\varepsilon_2$.
			
			Furthermore, by the definition of the spaces  $V_M^{\#}(\Omega)$ and $V_M^{\#}(\omega)$ in (\ref{espacio_Omega_1tipo})--(\ref{espacio_omega1tipo}), there exists a subsequence, also denoted by  $(\bu(\varepsilon))_{\varepsilon>0}$, and there exist  $\bu\in \WVMO $, $\bu^{-1}=(u_i^{-1})\in \WVO$, $\eij\in \WLO$ and $\bxi\in \WVMo$ such that
			\begin{align}\nonumber
			\bu(\varepsilon)&\deb \bu \en \WVMO, 
			\\\nonumber
			\varepsilon \bu(\varepsilon)&\deb \bu^{-1}\en \WHOt \ \textrm{hence,} \ \varepsilon \bu(\varepsilon)\rightarrow \bu^{-1} \en H^{1}(0,T;[L^2(\Omega)]^3),
			\\ \nonumber
			\eij(\varepsilon) &\deb \eij \en \WLO,
			\\ \nonumber
			\d_3u_3(\varepsilon)&=\varepsilon\edtres(\varepsilon) \rightarrow 0 \en \WLO,
			\\ \nonumber
			\overline{\bu(\varepsilon)} &\deb\bxi \en \WVMo ,
			\end{align}
			when $\varepsilon \rightarrow 0$.}
		
		Let $\bv=\bu(\varepsilon)$ in (\ref{ecuacion_fadmisibles}), then
		\begin{align} \nonumber
		&\int_{\Omega}A^{ijkl}(\varepsilon)e_{k||l}(\varepsilon)e_{i||j}(\varepsilon)\sqrt{g(\varepsilon)} dx
		+ \frac{1}{2}\frac{\d}{\d t}\int_{\Omega}B^{ijkl}(\varepsilon)\ekl(\varepsilon)\eij(\varepsilon)\sqrt{g(\varepsilon)} dx
		= L(\var)(\bu(\var)), 
		\end{align} 
		$\ae.$ Integrating over the interval $[0,T]$, using (\ref{elipticidadB_eps}) and (\ref{condicion_inicial_def}) we obtain that  
		\begin{align}\label{ref4}
		\int_0^T\left(\int_{\Omega}A^{ijkl}(\varepsilon)\ekl(\varepsilon)\eij(\varepsilon)\sqrt{g(\varepsilon)} dx \right)dt\leq \int_0^TL(\varepsilon)(\bu(\varepsilon))dt.
		\end{align}
		Now, by (\ref{fuerzasadmisibles}) and the Cauchy-Schwarz inequality we find that
		\begin{align} \nonumber
		\int_0^T L(\var)(\bu(\var)) dt &\leq \tilde{K}_0 \int_0^T \left( \sum_{i,j}|\eij(\varepsilon)|^2_{0,\Omega} \right)^{1/2} dt 
		\\ \label{ref1}
		&\leq \tilde{K}_0\sqrt{T} \left(\int_0^T \left( \sum_{i,j}|\eij(\varepsilon)|^2_{0,\Omega}\right) dt \right)^{1/2},
		\end{align}
		where $\tilde{K}_0:=\int_0^T K_0(t)dt > 0.$On the other hand, by (\ref{g_acotado}), (\ref{elipticidadA_eps}) and step $(i)$ we have that
		\begin{align} \nonumber
		c_0^{-2} C_e^{-1}g_0^{1/2}\left(|\bu(\varepsilon)|_{T,\Omega}^M\right)^2 &\leq C_e^{-1}g_0^{1/2} \int_0^T\left(\sum_{i,j}|\eij(\varepsilon)|^2_{0,\Omega}\right)dt
		\\ \label{ref3}
		\leq  \int_0^T&\left(\int_{\Omega}A^{ijkl}(\varepsilon)\ekl(\varepsilon)\eij(\varepsilon)\sqrt{g(\varepsilon)} dx\right) dt.
		\end{align}
		Now, (\ref{ref4})--(\ref{ref3})  together imply that  $|\eij(\var)|_{\LLO}$ is bounded and, as a consequence, $|\bu(\var)|^M_{T,\Omega}$ and $|\overline{\bu(\var)}|^M_{T,\omega}\leq|\bu(\var)|^M_{T,\Omega}$ do as well. By the Theorem \ref{Th_desigKorn} it follows that $||\var\bu(\var)||_{L^{2}(0,T;[H^1(\Omega)]^3)}$ is bounded. 
		
		Let $\bv=\dot{\bu}(\varepsilon)$ in (\ref{ecuacion_fadmisibles}), then
		\begin{align} \nonumber
		&\frac{1}{2}\frac{\d}{\d t}\int_{\Omega}A^{ijkl}(\varepsilon)e_{k||l}(\varepsilon)e_{i||j}(\varepsilon)\sqrt{g(\varepsilon)} dx
		\\& \quad + \int_{\Omega}B^{ijkl}(\varepsilon)\dekl(\varepsilon)\deij(\varepsilon)\sqrt{g(\varepsilon)} dx
		= L(\var)(\dot{\bu}(\var)), 
		\end{align} 
		$\ae.$ Integrating over  $[0,T]$, using (\ref{elipticidadA_eps}) and (\ref{condicion_inicial_def}) we obtain that  
		\begin{align}\label{ref5}
		\int_0^T\left(\int_{\Omega}B^{ijkl}(\varepsilon)\dekl(\varepsilon)\deij(\varepsilon)\sqrt{g(\varepsilon)} dx \right)dt\leq \int_0^TL(\varepsilon)(t)(\dot{\bu}(\varepsilon))dt,
		\end{align}
		that is analogous to (\ref{ref4}) with the contravariant components of the viscosity tensor instead. Hence, using similar arguments and (\ref{elipticidadB_eps}), we find that
		$\left|\deij(\var)\right|_{\LLO}$ are bounded and, as a consequence, $\left|\dot{\bu}(\var)\right|^M_{T,\Omega}$ and $\left|\dot{\overline{\bu(\var)}}\right|^M_{T,\omega}\leq\left|\dot{\bu}(\var)\right|^M_{T,\Omega}$ do as well. By the Theorem \ref{Th_desigKorn} it follows that $\left\| \var\dot{\bu}(\var)\right\| _{L^{2}(0,T;[H^1(\Omega)]^3)}$ is bounded. Therefore, the \textit{a priori} boundedness and convergences announced in this step are verified.

		\item { \em We obtain  expressions and relations for the limits $\eij$ found in the previous step.}
		
		Let $\bv=(v_i)\in V(\Omega)$. Then, by the definitions (\ref{eab})--(\ref{edtres}),
		\begin{align}\nonumber
		\varepsilon\eab(\varepsilon;\bv)&\rightarrow 0 \ \textrm{in} \ L^2(\Omega),
		\\\nonumber
		\varepsilon\eatres(\varepsilon;\bv)&\rightarrow \frac1{2}\d_3v_\alpha \  \textrm{in} \ L^2(\Omega),
		\\\nonumber
		\varepsilon\edtres(\varepsilon;\bv)&=\d_3v_3 \ \textrm{for all} \ \varepsilon>0.
		\end{align}
		
		
		Let $\bv=\varepsilon \bv \in V(\Omega)$ in (\ref{ecuacion_fadmisibles}) 
		and let $\varepsilon \rightarrow 0 $. As a consequence of the asymptotic behaviour of the functions $\varepsilon\eij(\varepsilon;\bv)$ above, the function $g(\varepsilon)$ and   the contravariant components of the fourth order tensors $A^{ijkl}(\varepsilon)$ and $B^{ijkl}(\varepsilon)$ (see Theorems \ref{Th_simbolos2D_3D} and \ref{Th_comportamiento asintotico}, respectively), the convergences of the admissible functions $F^{ij}(\var)$ and the weak convergences found in $(ii)$, we obtain that
		\begin{align}\nonumber
		&\int_{\Omega}2\mu a^{\alpha\sigma}\eatres\d_3v_\sigma + (\lambda + 2\mu)\edtres\d_3v_3\sqrt{a}dx
		+\int_{\Omega}\lambda a^{\alpha\beta}\eab\d_3v_3\sqrt{a}dx
		\\ \nonumber
		&\qquad  +\int_{\Omega}\rho a^{\alpha\sigma}\deatres\d_3v_\sigma+(\theta + \rho){\dedtres}\d_3v_3\sqrt{a} dx  +\int_{\Omega}\theta a^{\alpha\beta}{\deab}\d_3v_3\sqrt{a}dx
		\\  \label{ecuacion_integral_sus}
		&\quad= \int_{\Omega}\left( F^{\alpha 3} \d_3v_\alpha + F^{33}\d_3 v_3 \right)  \sqrt{a} dx, \ae.
		\end{align}
		Let $\bv\in V(\Omega)$ be independent of $x_3$. Then, we have that
		\begin{align}\nonumber
		&\int_{\Omega}2\mu a^{\alpha\sigma}\eatres\d_3v_\sigma\sqrt{a}dx
		+\int_{\Omega}\rho a^{\alpha\sigma}\deatres\d_3v_\sigma\sqrt{a} dx
		= \int_{\Omega} \left(F^{\alpha 3} \d_3v_\alpha \right) \sqrt{a} dx.
		\end{align}
		
		Hence, by Theorem \ref{th_int_nula} this equation leads to,
		\begin{align}\nonumber
		&2\mu a^{\alpha\sigma}\eatres
		+\rho a^{\alpha\sigma}\deatres
		=  F^{\sigma 3},
		\end{align}
		and using that $(a_{\alpha\sigma})^{-1}=\left(a^{\alpha \sigma}\right)$, we obtain the following ordinary differential equation, 
		\begin{align}\label{ecuacion_casuistica1}
		{2\mu}\eatres + \rho\deatres =a_{\alpha\sigma}F^{\sigma3}.
		\end{align}
		\begin{remark} \label{nota_desvio1}
			Note that removing time dependency and viscosity (taking $\rho=0$), the equation leads to the one studied in \cite{Ciarlet4b}, that is, the elastic case. 
		\end{remark}
		In order to solve the equation (\ref{ecuacion_casuistica1}) in the more general case, we assume that the viscosity coefficient $\rho$ is strictly positive. Moreover, we can prove that this equation is equivalent to 
		\begin{align} \nonumber
		& \frac{\d}{\d t}\left( e^{\frac{2\mu}{\rho} t} \eatres(t)\right) = \frac1{\rho} a_{\alpha \sigma} e^{\frac{2\mu}{\rho} t } F^{\sigma 3}(t).
		\end{align}
		Integrating with respect to the time variable and using (\ref{condicion_inicial_def}) we find that
		\begin{align}\label{eatres_longmemory}
		\eatres(t)=\frac1{\rho} a_{\alpha\sigma}\int_{0}^{t} e^{-\frac{2\mu}{\rho} (t-s)}F^{\sigma 3}(s)ds  \en \Omega, \forallt.
		\end{align}
		Moreover, from (\ref{ecuacion_casuistica1}) we obtain that,
		\begin{align*}
		\deatres(t)= \frac{1}{\rho}\left( a_{\alpha\sigma} F^{\sigma 3}(t) - 2\mu\eatres(t)\right) \en \Omega, \ae.
		\end{align*}
		Now, take in (\ref{ecuacion_integral_sus}) $\bv\in V(\Omega)$ such that $v_\alpha=0,$ then we have that
		\begin{align}\nonumber \label{ecua_int}
		&\int_{\Omega} (\lambda + 2\mu)\edtres\d_3v_3\sqrt{a}dx
		+\int_{\Omega}\lambda a^{\alpha\beta}\eab\d_3v_3\sqrt{a}dx +\int_{\Omega}(\theta + \rho){\dedtres}\d_3v_3\sqrt{a} dx
		\\
		&\qquad + \int_{\Omega}\theta a^{\alpha\beta}{\deab}\d_3v_3\sqrt{a}dx=
		\int_{\Omega}F^{33}\d_3v_3 \sqrt{a}dx.
		\end{align}
		Applying Theorem \ref{th_int_nula}, we  obtain the following differential equation,
		\begin{align} \label{ecuacion_casuistica}
		\lambda a^{\alpha\beta} \eab + (\lambda+2\mu) \edtres +\theta a^{\alpha \beta} \deab+ (\theta + \rho) \dedtres=F^{33}.
		\end{align}
		\begin{remark} \label{nota_desvio}
			Once again, note that removing time dependency and viscosity (taking $\theta=\rho=0$), the equation leads to the one studied in \cite{Ciarlet4b}, that is, the elastic case. 
		\end{remark}
		
		In order to solve the equation (\ref{ecuacion_casuistica}) in the more general case, we assume that the viscosity coefficient $\theta$ is strictly positive.  Moreover, we can prove that this equation is equivalent to
		\begin{align} \label{hipotesis}
		\theta e^{-\frac{\lambda}{\theta}t} \frac{\d}{\d t}\left(a^{\alpha\beta}\eab (t) e^{\frac{\lambda}{\theta}t}\right)= F^{33}(t)-\left(\theta + \rho \right)e^{-\frac{\lambda + 2\mu}{\theta + \rho}t} \frac{\d}{\d t}\left(\edtres (t) e^{\frac{\lambda + 2\mu}{\theta + \rho}t}\right).
		\end{align}
		Integrating respect to the time variable, using (\ref{condicion_inicial_def}) and simplifying  we find,
		\begin{align*}
		\edtres(t)= \frac{1}{\theta + \rho} \int_0^t e^{-k(t-s)} F^{33}(s)ds - \frac{\theta}{\theta + \rho} \left( a^{\alpha \beta }\eab(t) + \Lambda\int_0^te^{-k(t-s)}a^{\alpha\beta}\eab(s) ds \right),
		\end{align*}
		in $\Omega$ , $\forallt$, and where $\Lambda$ and $k$ are defined  in (\ref{Lambda}) and (\ref{k}), respectively. Moreover, from (\ref{ecuacion_casuistica}) we obtain that,
		\begin{align*}
		\dedtres(t)= \frac{1}{\theta + \rho}F^{33}(t)- \frac{\lambda}{\theta + \rho} a^{\alpha \beta} \eab(t)- \frac{\lambda + 2\mu}{\theta + \rho} \edtres (t) - \frac{\theta}{\theta + \rho} a^{\alpha\beta}\deab(t),
		\end{align*}
		in $\Omega$ , $\ae.$

		\item { \em The family $(\bu(\var)  )_{\var>0}  $ verifies
			\begin{equation}
			\left( \overline{\eab(\var)}  -\gab(\overline{\bu(\var)}) \right) \rightarrow 0 \ \textrm{in} \ \WLo \ \textrm{when} \ \var\rightarrow 0.
			\end{equation}
			As a consequence, the subsequence considered in $(ii)$ verifies
			\begin{equation}
			\gab(\overline{\bu(\var)}) \deb \overline{\eab} \ \textrm{in} \ \WLo.
			\end{equation}}
		This proof is a corollary of the step $(iv)$ in Theorem 5.6-1, \cite{Ciarlet4b}. We follow the same arguments made there but using Theorem \ref{Th_medias} $(a)$ and $(b)$. Then, the conclusion follows.

		\item {\em We obtain the equations satisfied by the limits $\eab$ found in the step $(ii).$}

		Let $\bv=(v_i)\in V(\Omega)$ be independent of the transversal variable $x_3$. Then, by the definitions (\ref{eab})--(\ref{edtres}),
		\begin{align}\nonumber
		&\eab(\varepsilon;\bv)\rightarrow \gab(\bv) \ \textrm{in} \ L^2(\Omega),
		\\\nonumber
		&\eatres(\varepsilon;\bv)\rightarrow \frac1{2}\d_\alpha v_3+ b_\alpha^\sigma v_\sigma\  \textrm{in} \ L^2(\Omega),
		\\\nonumber
		&\edtres(\varepsilon;\bv)=0 \ \textrm{for all} \ \varepsilon>0.
		\end{align}
		Keep such a function $\bv\in V(\Omega)$ in (\ref{ecuacion_fadmisibles}) and take the limit when $\var \to 0$. In the right-hand side of that equation, we have that
		\begin{equation} \label{limite_L_1}
		\lim_{\var\rightarrow0}L(\var)(\bv)=\int_{\Omega}\left( F^{\alpha\beta}\gab(\bv) + 2F^{\alpha 3} \left(\frac1{2}\d_\alpha v_3+b_\alpha^\sigma v_\sigma\right)\right)\sqrt{a}dx.
		\end{equation}
		In the left-hand side of the equation, by the asymptotic behaviour of functions  $g(\var)$ and the contravariant components of the fourth order tensors $A^{ijkl}(\var)$ and $B^{ijkl}(\var)$   (see Theorem \ref{Th_simbolos2D_3D}  and \ref{Th_comportamiento asintotico}, respectively), the convergences of the strain tensor components $\eij(\var;\bv)$  above and the weak convergences of $\eij(\var)\deb\eij$ in $\WLO$ found in step $(ii)$, we observe that,
		\begin{align}\nonumber
		& \int_{\Omega} A^{ijkl}(0)\ekl(\var)\eij(\var;\bv)\sqrt{a}dx +\int_{\Omega} B^{ijkl}(0)\dekl(\var)\eij(\var;\bv)\sqrt{a}dx
		\\ \nonumber
		&=\int_{\Omega}\left(\lambda a^{\alpha\beta}a^{\sigma\tau} + \mu\ten\right)\est\gab(\bv)\sqrt{a}dx 
		\\\nonumber
		& \quad +\int_{\Omega} \lambda a^{\alpha\beta} \edtres\gab(\bv)\sqrt{a}dx
		+ \int_{\Omega} 4\mu a^{\alpha\sigma}\estres \left(\frac1{2}\d_\alpha v_3+b_\alpha^\tau v_\tau\right)\sqrt{a}dx
		\\ \nonumber
		& \quad + \int_{\Omega} \left(\theta a^{\alpha\beta}a^{\sigma\tau}+ \frac{\rho}{2}\ten\right)\dest\gab(\bv)\sqrt{a}dx
		\\
		& \ \  +\int_{\Omega} \theta a^{\alpha\beta}\dedtres\gab(\bv)\sqrt{a}dx   
		+ \int_{\Omega} 2\rho a^{\alpha\sigma}\destres \left(\frac1{2}\d_\alpha v_3+b_\alpha^\tau v_\tau\right)\sqrt{a}dx,
		\end{align}
		which, using  the relations found in $(iii)$ and simplifying yields that,
		\begin{align} \nonumber
		& \frac{1}{2}\intO \a \est\gab({\bv})\sqrt{a}dx + \frac{1}{2}\intO \b \dest\gab({\bv})\sqrt{a}dx
		\\ \nonumber
		& \qquad- \frac{1}{2}\int_0^te^{-k(t-s)}\intO \c\est(s)\gab({\bv})\sqrt{a}dx ds 
		\\ \nonumber
		&\qquad + \intO \frac{\theta\Lambda}{\theta + \rho} \int_0^t e^{-k(t-s)} F^{33}(s)ds a^{\alpha\beta}\gab(\bv)\sqrt{a}dx
		\\ \nonumber
		&\qquad + \intO \frac{\theta}{\theta + \rho} F^{33} a^{\alpha\beta}\gab(\bv)\sqrt{a} dx 
		+\intO 2F^{\alpha 3} \left( \frac{1}{2} \d_\alpha v_3 + b_\alpha^\sigma v_\sigma\right) \sqrt{a} dx,
		\end{align}
		where  $\a$ , $\b$ and $\c$ denote the contravariant components of the two-dimensional  fourth order tensors defined  in (\ref{tensor_a_bidimensional})--(\ref{tensor_c_bidimensional}). Hence, together with (\ref{limite_L_1}) leads to
		\begin{align} \nonumber
		&\into \a \mest\gab(\bar{\bv})\sqrt{a}dy + \into \b \dmest\gab(\bar{\bv})\sqrt{a}dy 
		\\ \nonumber
		& \qquad- \int_0^te^{-k(t-s)}\into \c\mests\gab(\bar{\bv})\sqrt{a}dy ds 
		\\ \nonumber
		& \quad =\int_{\omega} \int_{-1}^{1} F^{\alpha\beta}dx_3\gab(\bar{\bv})\sqrt{a}dy  
		- \into \int_{-1}^{1}\frac{\theta\Lambda}{\theta + \rho} \int_0^t e^{-k(t-s)} F^{33}(s)ds a^{\alpha\beta}dx_3\gab(\bar{\bv})\sqrt{a}dy
		\\ \label{et2}
		&\qquad -\into \int_{-1}^{1} \frac{\theta}{\theta + \rho} F^{33} a^{\alpha\beta}dx_3\gab(\bar{\bv})\sqrt{a} dy= \into \varphi^{\alpha\beta}\gab(\bar{\bv})\sqrt{a}dy,
		\end{align}
		where  $\varphi^{\alpha\beta}$ denotes the real function defined in (\ref{phi_1}). Now, given $\beeta\in V(\omega)$, there exists a function $\bv\in V(\Omega)$ independent of $x_3$ such that $\overline{\bv}=\beeta$. Hence  (\ref{et2}) holds for all $\beeta\in V(\omega), \ae.$
		
		\item { \em The subsequence $(\bu(\var))_{\var>0}$ from $(ii)$ satisfies
			\begin{align}\label{stepvi_desig1}
			\var \bu(\var) &\deb \bcero \ \textrm{in} \ H^{1}(0,T;[H^1(\Omega)]^3), 
			\\ \label{stepvi_desig2}
			\d_3 u_\alpha(\var) &\deb 0 \ \textrm{in} \ \WLO, 
			\end{align}
			when $\var\rightarrow0.$ Moreover, $\eab$ are independent of the transversal variable $x_3$.}
		
		
		By the step $(ii)$ the functions $\bu^{-1}(\var):=\var\bu(\var)\in \WVO$  satisfy 
		\begin{align}\nonumber
		&\bu^{-1}(\var)\deb\bu^{-1} \en \WHOt,
		\\\label{ref16}
		& \textrm{hence} \ \bu^{-1}(\var)\to\bu^{-1} \en H^{1}(0,T;[L^2(\Omega)]^3),
		\\\label{ref17}
		&\frac{1}{\var}\eij(\var;\bu^{-1}(\var))\deb\eij \en \WLO.
		\end{align}
		Hence, by Theorem \ref{Th_522}, $\overline{\bu^{-1}}\in V_F(\omega)$ and consequently $\overline{\bu^{-1}}=\bcero$, since $V_F(\omega)=\{\bcero\}$ by assumption. By the same result, $\bu^{-1}$ is independent of $x_3$, hence $\bu^{-1}=\bcero$ and (\ref{stepvi_desig1}) is proved. Moreover, this implies that $\var\bu(\var)\to\bcero$ in $H^{1}(0,T;[L^2(\Omega)]^3)$. Now, by (\ref{eatres}) we have that
		\begin{align*}
		\d_3u_\alpha(\var)= 2\var\eatres(\var)-\var\d_\alpha u_3(\var) + 2\var\Gamma_{\alpha 3}^\sigma(\var)u_\sigma(\var).
		\end{align*}
		Therefore, together with the convergences in $(ii)$ and above and the boundedness of the sequence $(\Gamma_{\alpha3}^\sigma(\var))_{\var>0}$ in $\mathcal{C}^0(\bar{\Omega})$ by the Theorem \ref{simbolos3D}, imply that $\d_3 u_\alpha(\var)\deb 0$ in $\WLO$ and (\ref{stepvi_desig2}) is proved. Moreover, since  $\bu^{-1}=\bcero$ and (\ref{ref16})--(\ref{ref17}), taking $\bu=\bu^{-1}$ in Theorem \ref{Th_522} we have that $\d_3\eab=-\rab(\bu^{-1})=0.$ Therefore, the functions $\eab$ are independent of $x_3$.
		
		\item {\em The following strong convergences are satisfied,
			\begin{align*}
			\eij(\var)&\rightarrow\eij \en \WLO ,
			\\
			\var\bu(\var)&\rightarrow \bcero \en \WHOt,
			\\
			\gab(\overline{\bu(\var)}) &\rightarrow \overline{\eab} \en \WLo,
			\\
			\overline{\bu(\var)}&\rightarrow \xi \en \WVMo,
			\end{align*}
			$\Forallt.$}
		
		Let us define,
		\begin{align}\nonumber
		\Psi(\varepsilon)&:=\int_{\Omega}A^{ijkl}(\varepsilon)(\ekl(\varepsilon)-\ekl)(\eij(\varepsilon)-\eij)\sqrt{g(\varepsilon)}dx \\ \nonumber
		& \quad +\int_{\Omega}B^{ijkl}(\varepsilon)(\dekl(\varepsilon)-\dekl)(\eij(\varepsilon)-\eij)\sqrt{g(\varepsilon)}dx
		\\ \nonumber
		&=L(\var)(\bu(\var)) - \int_{\Omega} A^{ijkl}(\varepsilon)(2\ekl(\varepsilon)-\ekl)\eij\sqrt{g(\varepsilon)}dx
		\\ \nonumber
		& \quad + \int_{\Omega} B^{ijkl}(\varepsilon)(\dekl\eij - \frac{\d}{\d t}(\ekl(\var)\eij))\sqrt{g(\varepsilon)}dx, \ae.
		\end{align}
		We have that,
		\begin{align}\nonumber
		&\int_{\Omega}A^{ijkl}(\varepsilon)(\ekl(\varepsilon)-\ekl)(\eij(\varepsilon)-\eij)\sqrt{g(\varepsilon)}dx \\ \nonumber
		& \quad +\frac{1}{2}\frac{\d}{\d t}\int_{\Omega}B^{ijkl}(\varepsilon)(\ekl(\varepsilon)-\ekl)(\eij(\varepsilon)-\eij)\sqrt{g(\varepsilon)}dx
		=\Psi(\varepsilon), \ae.
		\end{align}
		Integrating over the interval $[0,T]$, using (\ref{elipticidadB_eps}) and (\ref{condicion_inicial_def}) we find that
		\begin{align}\label{Landa_eps}
		\int_0^T\left(\int_{\Omega}A^{ijkl}(\varepsilon)(\ekl(\varepsilon)-\ekl)(\eij(\varepsilon)-\eij)\sqrt{g(\varepsilon)}dx \right) dt
		\leq \int_0^T \Psi(\varepsilon) dt.
		\end{align}
		Now, by (\ref{g_acotado}) and (\ref{elipticidadA_eps}) we have
		\begin{align}\nonumber
		{C}_e^{-1}g_0^{1/2}\sum_{i,j}|\eij(\varepsilon)-\eij|^2_{0,\Omega}
		\leq\int_{\Omega}A^{ijkl}(\varepsilon)(\ekl(\varepsilon)-\ekl)(\eij(\varepsilon)-\eij)\sqrt{g(\varepsilon)}dx.
		\end{align}
		Therefore, together with the previous inequality leads to
		\begin{align}\nonumber
		{C}_e^{-1}g_0^{1/2}\int_0^T \left(\sum_{i,j}|\eij(\varepsilon)-\eij|^2_{0,\Omega}\right) dt
		\leq  \int_0^T \Psi(\varepsilon) dt.
		\end{align}
		On the other hand, the strong convergences $F^{ij}(\var)\rightarrow F^{ij}$ in $\LLO$ given by assumption and the weak convergences $\eij(\var)\deb \eij \en \WLO$ from $(ii)$, imply that
		\begin{align}\nonumber
		\lim_{\var\rightarrow 0} &L(\var)(\bu(\var))
		= \lim_{\var\rightarrow 0} \left(\int_{\Omega} F^{\alpha\beta}(\var)\eab(\var)\sqrt{g(\var)} dx\right.
		\\ \nonumber
		& \left. \qquad + \int_{\Omega} (2F^{\alpha 3}(\var) \eatres(\var) + F^{33}(\var)\edtres )\sqrt{g(\var)}dx \right)
		\\\label{et4}
		& =\intO F^{\alpha\beta}\eab\sqrt{a} dx + \intO \left(2F^{\alpha 3}\eatres + F^{33}\edtres  \right)\sqrt{a}dx.
		\end{align}
		Now, by the asymptotic behaviour of functions  $g(\var)$ and the contravariant components of the fourth order tensors $A^{ijkl}(\var)$ and $B^{ijkl}(\var)$   (see Theorem \ref{Th_simbolos2D_3D}  and \ref{Th_comportamiento asintotico}, respectively), we find that
		\begin{align}\nonumber
		&\lim_{\var\rightarrow 0}\left(\int_{\Omega} A^{ijkl}(\varepsilon)(2\ekl(\varepsilon)-\ekl)\eij\sqrt{g(\varepsilon)}dx \right.
		\\  \nonumber
		& \left.\qquad-\int_{\Omega} B^{ijkl}(\varepsilon)(\dekl\eij - \frac{\d}{\d t}(\ekl(\var)\eij))\sqrt{g(\varepsilon)}dx\right)
		\\\nonumber
		& \quad= \int_{\Omega} A^{ijkl}(0)\ekl\eij\sqrt{a}dx + \int_{\Omega} B^{ijkl}(0)\dekl\eij\sqrt{a}dx
		\\ \nonumber
		&\quad=\int_{\Omega}\left(\lambda a^{\alpha\beta}a^{\sigma\tau} + \mu\ten\right)\est\eab\sqrt{a}dx  +\int_{\Omega} \lambda a^{\alpha\beta} \edtres\eab\sqrt{a}dx
		\\\nonumber
		& \quad
		+\int_{\Omega} 4\mu a^{\alpha\sigma}\estres\eatres\sqrt{a}dx
		+ \intO \left( \lambda a^{\sigma \tau} \est + \left( \lambda + 2 \mu \right) \edtres  \right)\edtres \sqrt{a}   dx
		\\ \nonumber
		& \quad + \int_{\Omega} \left(\theta a^{\alpha\beta}a^{\sigma\tau}+ \frac{\rho}{2}\ten\right)\dest\eab\sqrt{a}dx  +\int_{\Omega} \theta a^{\alpha\beta}\dedtres\eab\sqrt{a}dx   
		\\ \nonumber
		& \quad
		+ \int_{\Omega} 2\rho a^{\alpha\sigma}\destres \eatres\sqrt{a}dx
		+ \intO \left( \theta a^{\sigma \tau} \dest + \left( \theta + \rho\right) \dedtres  \right)\edtres \sqrt{a}   dx,
		\end{align}
		which substituting the findings in the step $(iii)$ and simplifying leads to
		\begin{align} \nonumber
		&\into \a \mest \ \overline{\eab}\sqrt{a}dy + \into \b \dmest \ \overline{\eab}\sqrt{a}dy 
		\\ \nonumber
		&\qquad - \int_0^te^{-k(t-s)}\into \c\mests \ \overline{\eab}\sqrt{a}dy ds 
		\\ \nonumber
		& \qquad 
		+ \into  \int_{-1}^{1}  \left( \frac{\theta}{\theta + \rho} F^{33} a^{\alpha\beta} + \frac{\theta\Lambda}{\theta + \rho} \int_0^t e^{-k(t-s)} F^{33}(s)ds a^{\alpha\beta}  \right)dx_3      \overline{\eab}\sqrt{a}dy
		\\ \label{et3}
		& \qquad +\intO\left( 2 F^{\alpha 3} \eatres +  F^{33} \edtres\right) \sqrt{a} dx, 
		\end{align}
		where  $\a$ , $\b$ and $\c$ denote the contravariant components of the  fourth order tensors defined in (\ref{tensor_a_bidimensional})--(\ref{tensor_c_bidimensional}). Hence, together with (\ref{et4}), we have that
		\begin{align} \nonumber
		\Psi:=\lim_{\var\rightarrow 0} \Psi(\var)&= \into \varphi^{\alpha \beta} \overline{\eab} \sqrt{a} dy -\into \a \mest \ \overline{\eab}\sqrt{a}dy - \into \b \dmest \ \overline{\eab}\sqrt{a}dy 
		\\ \label{ref10}
		& \qquad
		+\int_0^te^{-k(t-s)}\into \c\mests \ \overline{\eab}\sqrt{a}dy ds ,
		\end{align}
		with $\varphi^{\alpha\beta}$ defined in (\ref{phi_1}). Now, since $\overline{\bu(\var)} \in V(\omega)$, for each $\var>0$, we take $\bar{\bv}=\beeta= \overline{\bu(\var)} $     in (\ref{et2}) and we have that
		\begin{align} \nonumber
		&\into \a \mest\gab(\overline{\bu(\var)})\sqrt{a}dy + \into \b \dmest\gab(\overline{\bu(\var)})\sqrt{a}dy 
		\\ \label{ref9}
		& \qquad- \int_0^te^{-k(t-s)}\into \c\mests\gab(\overline{\bu(\var)})\sqrt{a}dy ds 
		= \into \varphi^{\alpha\beta}\gab(\overline{\bu(\var)})\sqrt{a}dy  .
		\end{align}
		Taking in (\ref{ref9}) the limit when $\var\rightarrow 0$ together with the weak convergences in $(iv)$, we conclude from (\ref{ref10}) that $\Psi=0$ . As a consequence,  using  the Lebesgue dominated convergence theorem in (\ref{Landa_eps}),  the strong convergences $\eij(\var)\rightarrow \eij \en \LLO$ are verified.
		Analogously, if we define
		\begin{align}\nonumber
		\tilde{\Psi}(\varepsilon)&:=\int_{\Omega}A^{ijkl}(\varepsilon)(\ekl(\varepsilon)-\ekl)(\deij(\varepsilon)-\deij)\sqrt{g(\varepsilon)}dx \\ \nonumber
		& \qquad+\int_{\Omega}B^{ijkl}(\varepsilon)(\dekl(\varepsilon)-\dekl)(\deij(\varepsilon)-\deij)\sqrt{g(\varepsilon)}dx
		\\ \nonumber
		&\quad= L(\var)(\dot{\bu}(\var)) +\int_{\Omega} A^{ijkl}(\varepsilon)(\ekl\deij - \frac{\d}{\d t}(\ekl(\var)\eij))\sqrt{g(\varepsilon)}dx
		\\ \nonumber
		&\qquad -\int_{\Omega} B^{ijkl}(\varepsilon)(2\dekl(\varepsilon)-\dekl)\deij\sqrt{g(\varepsilon)} dx.
		\end{align} 
		We have that,
		\begin{align}\nonumber
		\frac{1}{2}&\frac{\d}{\d t}\int_{\Omega}A^{ijkl}(\varepsilon)(\ekl(\varepsilon)-\ekl)(\eij(\varepsilon)-\eij)\sqrt{g(\varepsilon)}dx
		\\ \nonumber
		& +\int_{\Omega}B^{ijkl}(\varepsilon)(\dekl(\varepsilon)-\dekl)(\deij(\varepsilon)-\deij)\sqrt{g(\varepsilon)}dx =\tilde{\Psi}(\varepsilon), \ae.
		\end{align}
		Integrating over  $[0,T]$, using (\ref{elipticidadA_eps}) and (\ref{condicion_inicial_def}) we find that
		\begin{align}\nonumber
		\int_0^T\left(\int_{\Omega}B^{ijkl}(\varepsilon)(\dekl(\varepsilon)-\dekl)(\deij(\varepsilon)-\deij)\sqrt{g(\varepsilon)}dx\right) dt
		\quad\leq \int_{0}^{T}\tilde{\Psi}(\varepsilon)dt,
		\end{align}
		Now, by (\ref{elipticidadB_eps}) and (\ref{g_acotado})
		\begin{align}\nonumber
		&{C}_v^{-1}g_0^{1/2}\sum_{i,j}|\deij(\varepsilon)-\deij|^2_{0,\Omega}
		\leq\int_{\Omega}B^{ijkl}(\varepsilon)(\dekl(\varepsilon)-\dekl)(\deij(\varepsilon)-\deij)\sqrt{g(\varepsilon)}dx
		\end{align}
		Therefore, together with the previous inequality leads to
		\begin{align}\label{Landat_eps}
		&{C}_v^{-1}g_0^{1/2}\int_0^T\left(\sum_{i,j}|\deij(\varepsilon)(t)-\deij(t)|^2_{0,\Omega}\right)dt \leq \int_0^T \tilde{\Psi}(\varepsilon) dt,
		\end{align}
		which is similar with (\ref{Landa_eps}). Therefore, using analogous arguments as before, we find that
		\begin{align} \nonumber
		\tilde{\Psi}&:=\lim_{\var\rightarrow 0} \tilde{\Psi}(\var)=\into \varphi^{\alpha \beta} \dot{\overline{\eab}} \sqrt{a} dy -\into \a \mest \ \dmeab\sqrt{a}dy- \into \b \dmest \ \dmeab\sqrt{a}dy 
		\\ \label{ref11}
		& \qquad+ \int_0^te^{-k(t-s)}\into \c\mests \ \dmeab\sqrt{a}dy ds, \ae.
		\end{align}
		Now, since $\dot{\overline{\bu(\var)}} \in V(\omega)$, for each $\var>0$, we take $\bar{\bv}=\beeta= \dot{\overline{\bu(\var)}} $     in (\ref{et2}) and we have that
		\begin{align} \nonumber
		&\into \a \mest\gab(\dot{\overline{\bu(\var)}})\sqrt{a}dy + \into \b \dmest\gab(\dot{\overline{\bu(\var)}})\sqrt{a}dy 
		\\ \nonumber
		& \qquad- \int_0^te^{-k(t-s)}\into \c\mests\gab(\dot{\overline{\bu(\var)}})\sqrt{a}dy ds 
		\\\label{ref12}
		& \quad= \into \varphi^{\alpha\beta}\gab(\dot{\overline{\bu(\var)}})\sqrt{a}dy  .
		\end{align}
		Taking in (\ref{ref12}) the limit when $\var\rightarrow 0$ together with the weak convergences in $(iv)$, we conclude from (\ref{ref11}) that $\tilde{\Psi}=0$.   As a consequence, using the Lebesgue dominated convergence theorem in (\ref{Landat_eps}), the  strong convergences  $\deij(\varepsilon) \rightarrow\deij $ in $L^2(0,T;L^2(\Omega))$  are satisfied. Therefore, we conclude that $\eij(\varepsilon) \rightarrow\eij $ in $\WLO.$
		
		Now, let $\bv=\bu^{-1}(\var)=\var\bu(\var)$ in the second inequality in  Theorem \ref{Th_521}. We find by the step $(ii)$ that there exist two constants $\tilde{C},\hat{C}>0$ such that
		\begin{align*}
		&||\d_3\eab(\var)+ \rab(\bu^{-1}(\var))||_{\WHMO}\leq \tilde{C}\Big(\var\sum_i|\eitres(\var)|_{\WLO} 
		\\
		& \qquad + \var\sum_\alpha|\var u_\alpha(\var)|_{\WLO} 
		+ \var||\var u_3(\var)||_{\WHO}  \Big) \leq \hat{C}\var .
		\end{align*}
		Moreover, since $\eab(\var)\to \eab$ in $\WLO$ and the functions $\eab$ are independent of $x_3$ (see step $(iv)$) we have that
		\begin{align*}
		\d_3 \eab (\var) \to \d_3\eab =0 \en \WHMO,
		\end{align*}
		hence, from the previous inequality,
		\begin{align*}
		\rab(\bu^{-1}(\var)) \to 0 \en \WHMO.
		\end{align*}
		Now, applying Theorem \ref{Th_522} we have that
		\begin{align*}
		\bu^{-1}(\var)=\var\bu(\var) \to \bcero \en \WHOt.
		\end{align*}
		By the Theorem \ref{Th_medias}  (a), the strong convergences $\eij(\var)\to\eij$ in $\WLO$ imply that $\overline{\eab(\var)}\to \overline{\eab}$ in $\WLo$. Therefore, by $(iv)$ 
		\begin{align*}
		\gab(\overline{\bu(\var)})\to \overline{\eab} \en \WLo.
		\end{align*}
		As a consequence, $(\gab(\overline{\bu(\var)}))_{\var>0}$ is a Cauchy sequence in $\WLo$. Now, since 
		\begin{align*}
		\left|\overline{\bu(\var)}- \overline{\bu(\var')}  \right|^M_{T,\omega} = \left(\sum_{\alpha,\beta} \int_0^T \left|\gab(\overline{\bu(\var)(t)}) - \gab(\overline{\bu(\var')(t)}) \right|^M_{\omega} dt \right)^{1/2},
		\end{align*}
		with $\var,\var'>0$  and the corresponding identity for the time derivatives hold, the strong convergence $\overline{\bu(\var)} \to \bxi$ in $\WVMo$ is verified.

		
		\item {\em The limit $\bxi(t)\in V_M^{\#}(\omega) \forallt$ found in $(vii)$ satisfies the system of equations
			\begin{align*}
			&B_{M}^{\#}(\bxi,\beeta)=L_{M}^{\#}(\beeta) \ \forall\beeta\in V_M^{\#}(\omega), \aes, 
			\\
			&\bxi(0, \cdot)=\bxi_0(\cdot),
			\end{align*}
			which has a unique solution. Then, the convergence $\overline{\bu(\var)}\rightarrow\bxi \en \WVMo $ is verified by the all family $(\overline{\bu(\var)})_{\var>0}$.}
		
		Let $\beeta\in V(\omega)$. By the steps  $(v)$ and $(vii)$ and since $\overline{\bu(\var)}\in \WVo$, we find that,
		\begin{align*}
		&\lim_{\var\to 0} B_{M}(\overline{\bu(\var)}, \beeta)= \lim_{\var\to 0} \left( \into \a \gst(\overline{\bu(\var)}) \gab(\beeta) \sqrt{a}dy \right.
		\\
		& \left. \quad + \into \b \gst(\dot{\overline{\bu(\var)}}) \gab(\beeta) \sqrt{a}dy  - \int_0^t e^{-k(t-s)}\into \c \gst(\overline{\bu(\var)(s)}) \gab(\beeta) \sqrt{a} dy ds \right)
		\\
		&=\into \a\mest \gab(\beeta) \sqrt{a}dy + \into \b \dmest \gab(\beeta) \sqrt{a}dy 
		\\
		&  \quad - \int_0^t e^{-k(t-s)}\into \c \mests \gab(\beeta) \sqrt{a} dy ds=L_{M}(\beeta).
		\end{align*}
		Furthermore, again by $(vii)$ we have that, 
		\begin{align*}
		\lim_{\var\to 0} B_{M}(\overline{\bu(\var)}, \beeta)= B_{M}^{\#}(\bxi,\beeta)=L_{M}(\beeta), \forall \beeta\in V(\omega), \aes,
		\end{align*}
		hence, $B_{M}^{\#}(\bxi,\beeta)=L_{M}^{\#}(\beeta) \ \forall \beeta\in V_M^{\#}(\omega), \aes,$ by the definition of the continuous extensions $B_{M}^{\#}$ and $L_{M}^{\#}$. Besides, this problem has a unique  solution by Theorem 6.4,  \cite{intro2}.
		
		\item { \em Let $\Omega$ be a domain in $\mathbb{R}^3$. Given  $\bv=(v_i)\in H^{1}(0,T;[L^2(\Omega)]^3)$, we define the distributions,
			\begin{equation}\label{ref15}
			e_{ij}(\bv):= \frac1{2}(\d_iv_j + \d_jv_i)\in \WHMO.
			\end{equation}
			
			Considering a sequence of functions $\bv^k=(v_i^k)\in H^{1}(0,T;[L^2(\Omega)]^3)$ such that $\bv^k\rightarrow\bcero \en \WHMOt$ and $e_{ij}(\bv^k)\rightarrow0 \en \WHMO$ when $k\rightarrow\infty$. Then, $\bv^k\rightarrow \bcero \en H^{1}(0,T;[L^2(\Omega)]^3)$.}
		
		This proof is a generalization of the step $(ix)$ in Theorem 5.6-1, \cite{Ciarlet4b}. We follow
		the same arguments made there to prove that $\bv^k\rightarrow \bcero \en L^2(0,T;[L^2(\Omega)]^3)$  and the corresponding convergences of the time derivatives in the same
		space. Then the conclusion follows.

		\item { \em The following convergences are satisfied:
			\begin{align*}
			\bu(\var)&\rightarrow \bu \en \WVMO,
			\\\d_3u_\alpha(\var)&\rightarrow 0 \en \WLO.
			\end{align*}}
		
		In order to prove the first convergence it is enough to prove that  $(\bu(\var))_{\var>0}$ and its time derivative are Cauchy sequences with respect to the norm  $\left|\cdot\right|_{T,\Omega}^M$. By its definition we have,
		\begin{align}\nonumber
		\int_0^T \left( | \bu(\var)(t)- \bu(\var')(t)|_{\Omega}^M  \right)^2dt &= \int_0^T \left( \sum_{\alpha, \beta}|\gab(\overline{\bu(\var)(t)}) - \gab(\overline{\bu(\var')(t)})|_{0,\omega}^2 \right) dt\\\label{ref18}
		& \quad +\int_0^T \left( \sum_i|\d_3 u_i(\var)(t) - \d_3 u_i(\var')(t)|_{0,\Omega}^2   \right) dt
		\end{align}
		and the analogous equality for the time derivative family. 
		Then, let us start proving that   $\d_3 u_\alpha (\var) \to 0$ en $\WLO$, the second convergence announced. This convergence is fulfilled if   $\d_3 \bu' (\var) \to 0$ in $\WLO$, with
		\begin{align} \label{ref14}
		\bu'(\var)=(u_1(\var), u_2(\var),0).
		\end{align}
		
		By  step  $(ix)$, proving this is equivalent to prove the following convergences:
		\begin{align}\label{conv1}
		\d_3 \bu'(\var)&\to \bcero \en \WHMOt,
		\\\label{conv2}
		e_{ij}(\d_3\bu'(\var)) &\to 0 \en \WHMO.
		\end{align}
		
		By (\ref{eatres}), we can obtain that
		\begin{align}\label{ref13}
		\d_3 u_\alpha (\var) = 2 \var \eatres (\var) - \var \d_\alpha u_3(\var) + 2\var \Gamma^{\sigma}_{\alpha 3} (\var) u_\sigma(\var). 
		\end{align}
		
		Hence, since the sequence $(\Gamma_{\alpha 3}^\sigma(\var))_{\var>0}$ is bounded in  $\mathcal{C}^0(\bar{\Omega})$ (see Theorem  \ref{Th_simbolos2D_3D}) and by the convergences  $\var \eatres(\var)\to 0$, $\var u_i(\var)\to 0$ in $\WLO$ by steps $(vi)$ y $(vii)$, respectively, together imply that
		\begin{align} \label{ref8}
		\d_3u_\alpha(\var)\to 0 \en \WHMO.
		\end{align} 
		
		That is, (\ref{conv1}) is verified. In order to prove (\ref{conv2}), firstly, we have that $e_{33}(\d_3 \bu'(\var))=\d_3 e_{33}(\bu'(\var))=0$ by (\ref{ref14}). Now, the asymptotic behaviour of the functions  $\Gamma_{\alpha3}^\sigma(\var)$ (see Theorem \ref{Th_simbolos2D_3D}) and the convergences  $\eij(\var)\to\eij$ in $\WLO$ (see step $(vii)$) imply that  (see (\ref{eatres})),
		\begin{align*}
		\left(\d_3 u_\alpha(\var) + \var\d_\alpha u_3(\var) + 2 \var b_\alpha^{\sigma}u_\sigma(\var) \right)\to 0 \en \WLO,
		\end{align*}
		thus,
		\begin{align*}
		\left(\d_{33} u_\alpha(\var) + \var\d_{\alpha3} u_3(\var) + 2 \var b_\alpha^{\sigma}\d_3u_\sigma(\var) \right)\to 0 \en \WHMO.
		\end{align*}
		
		Since $\d_3 u_3(\var)\to 0$ in $\WLO$ and $\var u_\sigma(\var)\to0$ in $\WLO$ (see steps $(ii)$ y $(vi)$, respectively) we have that
		\begin{align*}
		2 e_{\alpha 3}(\d_3 \bu'(\var))= \d_{33}u_\alpha (\var) \to 0 \en \WHMO.
		\end{align*}
		
		Now, by step  $(vi)$  we have that $\eab(\var)\to \eab$ in $\WLO$ and $\d_3\eab=0$, then we infer that $\d_3\eab(\var)\to 0$ in $\WHMO$. Hence,
		\begin{align*}
		\d_3\eab(\var)=\left( \d_3 e_{\alpha \beta} (\bu(\var)) - \d_3(\Gamma^p_{\alpha\beta}(\var)u_p(\var)) \right)\to 0 \en \WHMO.
		\end{align*}
		
		Since $\Gamma_{\alpha \beta}^p (\var)\in \mathcal{C}^1(\bar{\Omega})$ (by its definition, see  (\ref{simbolos3D}) and (\ref{escalado_simbolos}) ), then $\Gamma_{\alpha \beta}^p (\var)u_p(\var)\in\WHO$. Moreover, 
		\begin{align*}
		\d_3 ( \Gamma_{\alpha \beta}^p (\var)u_p(\var))&= \d_3\Gamma_{\alpha \beta}^p (\var)u_p(\var) + \Gamma_{\alpha \beta}^p (\var)\d_3u_p(\var), \\
		\Gamma_{\alpha \beta}^p (\var)\d_3u_p(\var) &\to 0 \en \WHMO,
		\end{align*}
		since $\d_3 u_p(\var)\to 0$ in $\WHMO$ (see step $(ii)$ and (\ref{ref8})). Now, the estimates  $||\d_3 \Gamma^p_{\alpha\beta}(\var)||_{0,\infty, \bar{\Omega}}\leq C\var$, with a constant $C>0$ (see Theorem \ref{Th_simbolos2D_3D}) and the convergences  $\var\bu(\var)\to 0$ in $H^{1}(0,T; [L^2(\Omega)]^3)$ (see step $(vi)$) imply that 
		\begin{align*}
		\d_3\Gamma_{\alpha \beta}^p (\var)u_p(\var) \to 0 \en \WLO.
		\end{align*}
		
		Therefore, $e_{\alpha\beta}(\d_3 \bu'(\var))=\d_3 e_{\alpha\beta}(\bu(\var))\to 0$ in $\WHMO$. So that, we complete the proof of the convergences (\ref{conv2}). Then, together with (\ref{conv1}) we have, by step $(ix)$, that  $\d_3 u_\alpha (\var) \to 0$ in $\WLO$. Since $\d_3 u_3(\var)\to 0$ in $\WLO$ and $\gab(\overline{\bu(\var)})\to \meab$ in $\WLo$ by steps $(ii)$ and $(vii)$, we can conclude from the identity (\ref{ref18}) the proof of this step.

		
		\item {\em All family $(\bu(\var))_{\var>0}$ converges strongly to $\bu$ in the space $\WVMO$.}
		
		The family $(\overline{\bu(\var)})_{\var>0}$ converges strongly  in $\WVMo$ by step $(viii)$ and $\d_3\bu(\var)\to \bcero$ in $\WLO$ for a subsequence (see steps $(ii)$ and $(x)$). Then, since the limit of such subsequence is unique, the whole family $(\d_3\bu(\var))_{\var>0}$ converges in $\WLO$. Therefore, by the definition of the norm $\left|\cdot\right| _{T,\Omega}^M$, $(\bu(\var))_{\var>0}$ is a Cauchy sequence in the Hilbert space $\WVMO$, hence, the conclusion follows.

	\end{enumerate}
	Therefore, the proof of the theorem is complete.
\end{proof}

\begin{remark}
	For each $\var>0$, let $\sigma^{ij,\var}=A^{ijkl,\var}\eij^\var(\bu^\var) + B^{ijkl,\var}\eij^\var(\dot{\bu}^\var)$ denote the contravariant components of the linearized stress tensor field for a family of linearly viscoelastic shells that satisfy the conditions of Theorem \ref{Th_convergencia} and let us define the scaled stresses $\sigma^{ij}(\var):\bar{\Omega}\to\mathbb{R}$ by letting $\sigma^{ij,\var}(\bx^\var)=:\sigma^{ij}(\var)(\bx)$ for all $\bx^\var=\pi^\var(\bx)\in\bar{\Omega}^\var$. Then, the scaled stresses satisfy  \begin{align*} 
	\sigma^{ij}(\var)=A^{ijkl}(\var)\eij(\var) + B^{ijkl}(\var)\deij(\var). 
	\end{align*}
	Hence, using the asymptotic behaviour of  $A^{ijkl}(\var)$, $B^{ijkl}(\var)$ (see Theorem \ref{Th_comportamiento asintotico}) and the strong convergences of $\eij(\var)(t,\cdot)$ in $\WLO$  found in Theorem \ref{Th_convergencia}, we can prove that  $\sigma^{ij}(\var)$  converge in $\LLO$.  To obtain these results we follow similar arguments to those used in \cite{Collard} for the elastic case. While for the elliptic membrane case (see \cite{eliptico}) we can prove that those convergences lead us to the plane stress case, the generalized membranes are subjected to   the admissible forces consideration.    
\end{remark}

It remains to prove the analogous result to the previous theorem in terms of de-scaled unknowns. Therefore we need to de-scale the unknown $\bxi$, solution of the two-dimensional variational scaled problem. By the scaling proposed in Section \ref{seccion_dominio_ind}, we define for each $\var>0$ the vector field $\bxi^\var$ such that
\begin{align*}
\bxi^\var:=\bxi \en \WVMo,
\end{align*}
that is solution os the de-scaled version of Problem \ref{problema_primertipo}:
\begin{problem}\label{problema_primertipo_esc}
	Find $\bxi^\var(t,\cdot)\in V_M^{\#}(\omega) \forallt $ such that,
	\begin{align*}
	&B_{M}^{\# \var}(\bxi^\var(t),\beeta)=L_{M}^{\#\var}(\beeta)(t) \ \forall \beeta\in V_M^{\#}(\omega), \ae,
	\\ \nonumber
	&\bxi^\var(0,\cdot)=\bxi_0^\var(\cdot),
	\end{align*}
	where $B_{M}^{\#\var}$ and $L_{M}^{\#\var}$ are the unique continuous extensions from 
	$H^{1}(0,T; V(\omega))$ to $\WVMo$ and from $V(\omega)$ to $V_M^{\#}(\omega)$ 
	of the functions  $B_{M}^\var:H^{1}(0,T; V(\omega)) \times V(\omega)\longrightarrow \mathbb{R}$ and $L_{M}^\var(t): V(\omega)\longrightarrow \mathbb{R}$, respectively, defined by
	\begin{align*} \nonumber
	B_{M}^\var(\bxi^\var(t),\beeta)&:=\var\int_{\omega}\aeps\gst(\bxi^\var(t))\gab(\beeta)\sqrt{a}dy + \var\int_{\omega}\beps\gst(\dot{\bxi^\var}(t))\gab(\beeta)\sqrt{a}dy 
	\\
	\qquad & - \var\int_0^te^{-k(t-s)}\into \ceps \gst(\bxi^\var(s))\gab(\beeta)\sqrt{a}dyds,
	\\
	L_{M}^\var(\beeta)(t)&:= \int_{\omega} \varphi^{\alpha\beta,\var}(t)\gab(\beeta)\sqrt{a}dy,  
	\end{align*}
	
	where  $\aeps,$ $\beps $ and $\ceps$ denote  the  re-scaled versions of the contravariant components of the two-dimensional fourth order tensors that we shall recall later (\ref{tensor_a_bidimensional})--(\ref{tensor_c_bidimensional}),  $\varphi^{\alpha\beta,\var}$ is a de-scaled version of the real function defined in (\ref{phi_1}).
	
\end{problem}

Notice that, for the viscoelastic generalized membrane shells, we can not consider the de-scaling of each component of the unknown separately, since the previous equality must be understood only in the abstract completion space. Therefore, we can prove the following convergence result:

\begin{theorem}
	Assume that $\btheta\in\mathcal{C}^3(\bar{\omega};\mathbb{R}^3)$. Consider a family of viscoelastic generalized membrane shells of the first kind with thickness $2\var$ approaching zero and with each having the same  middle surface $S=\btheta(\bar{\omega})$, with each subjected to a boundary condition of place along a portion of its lateral face having the same set $\btheta(\gamma_0)$ as its middle curve and subjected to admissible forces (see Section \ref{seccion_fuerzas_admisibles}).
	
	Let $\bu^\var=(u_i^\var)\in H^{1}(0,T; V(\Omega^\var))$ and $\bxi^\var=(\xi_i^\var)\in H^{1}(0,T; V_M^{\#}(\omega))$ respectively denote for each $\var>0$ the solutions to the three-dimensional and two-dimensional Problems \ref{problema_eps} and \ref{problema_primertipo_esc}. Moreover, let $\bxi=(\xi_i)\in H^{1}(0,T; V_M(\omega))$ denote the solution to the Problem \ref{problema_primertipo}. Then we have that
	\begin{align*}
	\bxi^\var=\bxi \ \textrm{and} \
	\frac{1}{2\var}\int^\var_{-\var} \bu^\var  dx_3^\var \to \bxi \ \textrm{in} \ \WVMo \ \textrm{as} \ \var \to 0,
	\end{align*}
\end{theorem}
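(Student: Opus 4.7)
The plan is to reduce the statement to the already-proved scaled convergence Theorem \ref{Th_convergencia} via the inverse of the scaling introduced in Section \ref{seccion_dominio_ind}. The proof splits naturally into two independent verifications: the identification $\bxi^\var = \bxi$ at the two-dimensional level, and the convergence of the transversal average at the three-dimensional level.

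For the identification $\bxi^\var = \bxi$, I would proceed by inspection of the two variational problems. By construction, the de-scaled tensors $\aeps, \beps, \ceps$ coincide (up to the thickness prefactor $\var$ already displayed in $B_{M}^\var$) with the two-dimensional tensors $\a, \b, \c$, and $\varphi^{\alpha\beta,\var}$ is defined as the de-scaled version of $\varphi^{\alpha\beta}$ so that $L_{M}^\var = \var L_{M}$. Dividing the variational identity of Problem \ref{problema_primertipo_esc} through by $\var$ therefore produces exactly the variational identity of Problem \ref{problema_primertipo}, with identical initial condition $\bxi_0^\var = \bxi_0$. Uniqueness of the solution (Theorem \ref{Th_exist_unic_bid_cero}) then yields $\bxi^\var = \bxi$.

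For the convergence, I would perform the change of variable $x_3^\var = \var x_3$ inside the transversal integral. Using the scaling rule $u_i^\var(t,\bx^\var) = u_i(\var)(t,\bx)$ of Section \ref{seccion_dominio_ind} together with $dx_3^\var = \var\, dx_3$, one obtains
\begin{equation*}
\frac{1}{2\var}\int_{-\var}^{\var} \bu^\var(t,y_1,y_2,x_3^\var)\, dx_3^\var
 = \frac{1}{2}\int_{-1}^{1} \bu(\var)(t,y_1,y_2,x_3)\, dx_3
 = \overline{\bu(\var)}(t,y_1,y_2).
\end{equation*}
Theorem \ref{Th_convergencia} already asserts that $\overline{\bu(\var)} \to \bxi$ in $\WVMo$ as $\var\to 0$. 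Combining this with the identity $\bxi = \bxi^\var$ from the previous paragraph delivers both claims of the theorem.

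The main obstacle is not analytical but notational: one must carefully track the thickness prefactor $\var$ that is distributed between the bilinear form, the linear form, the measure $dx_3^\var$ and the tensors $\aeps,\beps,\ceps,\varphi^{\alpha\beta,\var}$, so as to make the cancellations between the de-scaling and Theorem \ref{Th_convergencia} transparent. Once this bookkeeping is carried out, no further analysis is required, since the strong convergence in the abstract completion space $V_M^{\#}(\omega)$ is already guaranteed by the main theorem of the previous section.
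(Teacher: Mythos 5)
Your proof is correct and follows essentially the same route as the paper: the change of variable $x_3^\var = \var x_3$ reduces the transversal average to $\overline{\bu(\var)}$, and Theorem~\ref{Th_convergencia} supplies the convergence. The one place you add genuine content is the identification $\bxi^\var=\bxi$: the paper simply declares $\bxi^\var:=\bxi$ when introducing Problem~\ref{problema_primertipo_esc}, whereas you verify that this is consistent with $\bxi^\var$ being characterized as the solution of Problem~\ref{problema_primertipo_esc}, by dividing the de-scaled variational identity by $\var$ and invoking uniqueness from Theorem~\ref{Th_exist_unic_bid_cero}. That is a legitimate (and arguably more honest) way to prove the first equality as stated, rather than taking it as a definition.
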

\begin{proof}
	Notice that,
	\begin{align*}
	\frac{1}{2\var}\int^\var_{-\var} \bu^\var  dx_3^\var = \frac{1}{2}\int^1_{-1} \bu(\var)  dx_3= \overline{\bu(\var)}. 
	\end{align*}
	Hence, the conclusion follows by applying Theorem \ref{Th_convergencia}.
\end{proof}


\section{Conclusions} \label{conclusiones}

We have found and mathematically justified a model for  viscoelastic generalized membrane shells subjected to admissible forces. To this end we used the  asymptotic expansion method (presented in our previous work \cite{intro2}) and we have justified this approach by obtaining convergence theorems. As in the elastic case we have distinguished two cases (generalized membrane of the first kind or second kind) depending on whether or not the space  $V_0(\omega)$ contains non-zero functions. For each case, completion spaces were needed  in order to obtain well posed problems. 

The main novelty that this model presented is a long-term memory, represented by an integral on the time variable, more specifically
\begin{align*}
M(t,\beeta)=\int_0^te^{-k(t-s)}\into \c \gst(\bxi(s))\gab(\beeta)\sqrt{a}dyds , 
\end{align*}
for all $\beeta\in V_M^{\#}(\omega)$ (analogously for  generalized membranes of the second kind). Analogous behaviour has been detected in beam models for the bending-stretching of viscoelastic rods \cite{AV}, obtained by using asymptotic methods as well.  Also, this kind of viscoelasticity has been described in \cite{DL,Pipkin}, for example. 

As the viscoelastic case differs from the elastic case on time dependent constitutive law and external forces, we must consider the possibility that these models and the convergence result generalize the elastic case (studied in \cite{Ciarlet4b,CiarletLods5}). However, the reader can easily check that when the ordinary differential equation (\ref{ecuacion_casuistica1}) and (\ref{ecuacion_casuistica}) were presented, we  had to consider assumptions that make it impossible to include the elastic case.  Hence, the viscoelastic and elastic problems must be treated separately in order to reach reasonable and justified conclusions.

Furthermore,  as in the elastic case \cite{Ciarlet4b,CiarletLods5},  we found that $\d_3\bu(\var)(t, \cdot)\rightarrow \bcero \forallt$, while for the elliptic case we proved that the three-dimensional limit $\bu$ was independent of $x_3$ (see \cite{eliptico,eliptico2}). Therefore, the displacements on a viscoelastic generalized membrane shell might not be independent of the transversal variable. Moreover, notice that we proved convergence theorems when applied admissible forces (\ref{fuerzasadmisibles_def}) are considered, hence, the body and surface forces can not be arbitrarily chosen as in the elliptic case.

These models together with the elliptic case presented in our previous paper \cite{eliptico}, complete the study for the viscoelastic membrane shells. The remaining  case, when $V_F(\omega)$ contains non-zero functions (see \cite{intro2}), known as the problem of a viscoelastic flexural shell, has been studied in \cite{flexural}.

As future work,  we are currently working on the justification of the viscoelastic Koiter's  equations in \cite{Koiter}. Also, it would be interesting to derive error estimates for the two-dimensional models derived in \cite{intro2}. These results, we would prove the accuracy of our two-dimensional models for its application on real problems. 

\section*{Acknowledgements}
{\footnotesize \noindent This research was partially supported by Ministerio de Econom\'ia y Competitividad of Spain, under the grant  MTM2016-78718-P, with the participation of FEDER.}




%

\section*{References}
\bibliographystyle{abbrv}
\bibliography{biblio_tesis}

\end{document}